\renewcommand\theequation{\oldstylenums{\thesection}%
                   .\oldstylenums{\arabic{equation}}}
\theoremstyle{plain}
\newtheorem{theorem}{Theorem}
\newtheorem{prop}[theorem]{Proposition}
\newtheorem{cor}[theorem]{Corollary}
\newtheorem{lem}[theorem]{Lemma}
\theoremstyle{definition}
\newtheorem{definition}[theorem]{Definition}
\newtheorem{notation}[theorem]{Notation}
\newtheorem{example}[theorem]{Example}
\newtheorem{remark}[theorem]{Remark}
\numberwithin{theorem}{section}        
\def\super{\mathbb{Z}_2}
\def\even{\overline{0}}
\def\odd{\overline{1}}
\def\stensor{\underline{\otimes}}
\def\category{\mathcal{O}_{\mathrm{int}}}
\def\weight{\mathrm{wt}}
\def\End{\mathrm{End}}
\def\Alg{\mathrm{Alg}}
\def\Kac{\textbf{K}}
\def\Verma{\textbf{M}}
\def\Weyl{\textbf{W}}
\def\simple{\textbf{S}}
\def\str{\mathrm{str}}
\def\BZ{\mathbb{Z}}  
\def\BC{\mathbb{C}}
\def\Ud{U_q(\mathcal{L}\mathfrak{sl}(M,N))}
\def\Uc{U_q'(\widehat{\mathfrak{sl}(M,N)})}
\title{Representations of Quantum Affine Superalgebras} 
\author{
{\normalsize Huafeng Zhang }
}
\date{}
\begin{document}
\newcommand{\titlefont}{10 pt}
\maketitle

\renewcommand\proofname{Proof}
\renewcommand\contentsname{Contents}
\renewcommand\refname{References}
\renewcommand\abstractname{Abstract}
\abstract{\begin{footnotesize}
We study the quantum affine superalgebra $\Ud$ and its finite-dimensional representations. We prove a triangular decomposition and  establish a system of Poincar\'{e}-Birkhoff-Witt generators for this superalgebra, both in terms of Drinfel'd currents. We define the Weyl modules in the spirit of Chari-Pressley and  prove that these Weyl modules are always finite-dimensional and non-zero. In consequence, we obtain a highest weight classification  of finite-dimensional simple representations when $M \neq N$. Some concrete simple representations are constructed via evaluation morphisms. 
\end{footnotesize} }

\titlecontents{section}[1em]{\normalsize}{\contentslabel{1em}}{}{%
                \titlerule*[0.9pc]{$\cdot$}\contentspage}
\titlecontents{subsection}[4em]{\normalsize}{\contentslabel{2em}}{}{%
                \titlerule*[0.5pc]{$\cdot$}\contentspage}
\tableofcontents                
\section{Introduction}
In this paper $q \in \mathbb{C} \setminus \{0\}$ is not a root of unity and our ground field is always  $\mathbb{C}$. We study a quantized version of the enveloping algebra of the affine Lie superalgebra $\mathcal{L}\mathfrak{sl}(M,N)$, which we denote by $U_q(\mathcal{L}\mathfrak{sl}(M,N))$.
\paragraph{Some properties of $\Ud$.}
For $M,N \in \mathbb{Z}_{\geq 1}$, the quantum affine superalgebra $\Ud$ is defined in terms of Drinfel'd currents. It is the superalgebra with 
\begin{itemize}
\item[(1)] Drinfel'd generators $X_{i,n}^{\pm}, h_{i,s}, K_i^{\pm 1}$ for $1 \leq i \leq M+N-1, n \in \BZ, s \in \BZ_{\neq 0}$;
\item[(2)] $\super$-grading $|X_{M,n}^{\pm}| = \odd$, and $|X_{i,n}^{\pm}| = |K_i^{\pm 1}| = |h_{i,s}| = |K_M^{\pm 1}| = |h_{M,s}| = \even$ for $i \neq M$;
\item[(3)] defining relations \eqref{rel: Cartan}-\eqref{rel: ocillation4} (see \S \ref{sec: Drinfeld presentation} for details).
\end{itemize}
Informally, when $q=1$,  $\Ud$ can be thought of as the universal enveloping algebra of the Lie superalgebra $\mathcal{L}\mathfrak{sl}(M,N) := \mathfrak{sl}(M,N) \otimes \mathbb{C}[t,t^{-1}]$  with the convention that
\begin{align*}
&X_{i,n}^+ = E_{i,i+1} t^n,\ \ X_{i,n}^- = E_{i+1,i} t^n,\ h_{i,s} = (E_{i,i} - (-1)^{\delta_{i,M}} E_{i+1,i+1}) t^s. 
\end{align*}
Let $U_q^{\pm}(\mathcal{L}\mathfrak{sl}(M,N))$ (resp.  $U_q^0(\mathcal{L}\mathfrak{sl}(M,N))$) be the subalgebra of $\Ud$ generated by the $X_{i,n}^{\pm}$ (resp. the  $K_i^{\pm 1}, h_{i,s}$). Then the Chevalley relations imply that 
\begin{displaymath}
\Ud = U_q^-(\mathcal{L}\mathfrak{sl}(M,N)) U_q^0(\mathcal{L}\mathfrak{sl}(M,N))U_q^+(\mathcal{L}\mathfrak{sl}(M,N))
\end{displaymath}
and that $U_q^0(\mathcal{L}\mathfrak{sl}(M,N))$ is a commutative algebra. 

When $M \neq N$, it is shown in \cite[Theorem 6.8.2]{Yam2} that $\Ud$ has a Chevalley presentation and is equipped with a Hopf superalgebra structure. Using the coproduct, we can form the tensor product of two representations of $\Ud$.  Note however that the coproduct formulae for $X_{i,n}^{\pm}, h_{i,s}$ are highly non-trivial.  
 
\paragraph{Backgrounds.}
In analogy with the applications of quantum affine algebras in solvable lattice models \cite{Jimbo}, quantum affine superalgebras also appear as the algebraic supersymmetries of some solvable models.  In  \cite{BT}, the quantum affine superalgebra $U_q(\mathcal{L}\mathfrak{sl}(2,1))$, together with its universal $R$-matrix, which exists in the framework of Khoroshkin-Tolstoy \cite{KT1,KT},   was used to define the $\textbf{Q}$-operators and to deduce their functional relations. These $\textbf{Q}$-operators were then applied in integrable models of statistic mechanics (3-state $\mathfrak{gl}(2,1)$-Perk-Schultz model) and the associated quantum field theory. Here the functional relations come essentially from the tensor product decompositions of representations of $U_q(\mathcal{L}\mathfrak{sl}(2,1))$ and its Borel subalgebras.

When $M \neq N$, $U_q(\widehat{\mathfrak{sl}(M,N)})$ (extended $\Ud$ with derivation) is the quantum supersymmetry analogue of the supersymmetric $t-J$ model (with or without a boundary). A key problem is to diagonalize the commuting transfer matrices. In \cite{Kojima} for example, Kojima proposed a construction of the boundary state using the machinery of algebraic analysis method. There to obtain the bosonization of the vertex operators \cite{boson}, one needs to work in some highest weight Fock representations of $U_q(\widehat{\mathfrak{sl}(M,N)})$.  

In the case $M = N = 2$, the Lie superalgebra $\mathfrak{sl}(2,2)$ admits a two-fold non-trivial central extension.  In \cite{Beisert1}, using the quantum deformation of this centrally extended algebra and its fundamental representations (which exist infinitely), Beisert-Koroteev deduced Shastry's spectral $R$-matrix $R(u,v)$.  Also it is found \cite{Bei} that the $S$-matrix of AdS/CFT enjoys a symmetry algebra: the {\it conventional} Yangian associated to the centrally extended algebra.  Later, \cite{Beisert2} derived a quantum affine superalgebra $\widehat{\mathcal{Q}}$ depending essentially on two parameters, together with a Hopf superalgebra structure and its fundamental representations of dimension 4. This algebra is interesting itself as it is explained there to have two conventional \lq\lq limits\rq\rq: one is $U_q(\widehat{\mathfrak{sl}(2,2)})$ ; the other is the Yangian limit. The two limiting processes carry over to the fundamental representations. Higher representations of this algebra are however still missing. 

It is therefore worthwhile to study quantum (affine) superalgebras, Yangians, and their representations.  For a symmetrizable quantum affine superalgebra $U_q(\mathfrak{g})$, early in 1997, Ruibin Zhang has classified integrable irreducible highest weight representations \cite{Zh4}(here being {\it symmetrizable}  excludes the existence of simple isotopic odd roots). Recently, in  \cite{Wang, Kashiwara}, the authors obtained a (super)categorification of some quantum symmetrizable Kac-Moody superalgebras and their integrable highest weight modules from quiver Hecke superalgebras. However, the affine Lie superalgebras $\mathcal{L}\mathfrak{sl}(M,N)$ are not symmetrizable, as they contain simple isotopic odd  roots. It is desirable to study $\Ud$ and their representations. 

In the paper \cite{Zh3}, Zhang considered the $\mathfrak{gl}(M,N)$ super Yangian and its finite-dimensional representations. The super Yangian $Y(\mathfrak{gl}(M,N))$ can be viewed as a deformation of the universal enveloping superalgebra  $U(\mathfrak{gl}(M,N)\otimes \mathbb{C}[t])$. Zhang equipped the super Yangian with a Hopf superalgebra structure and wrote explicitly a Poincar\'{e}-Birkhoff-Witt (PBW for short) basis. From this PBW basis one reads a triangular decomposition. Zhang proved that all finite-dimensional representations of $Y(\mathfrak{gl}(M,N))$ are of highest weight with respect to this triangular decomposition, and parametrised these highest weights by polynomials (see \S \ref{sec: final section} below). The aim of this paper is to develop a similar highest weight representation theory for some quantum affine superalgebras. 

We remark that Zhang's proof of the classification result relied on the coproduct structure $\Delta$ and on some superalgebra automorphisms $\phi_s$ of the super Yangian. For the quantum affine superalgebra $U_q(\mathcal{L}\mathfrak{sl}(M,N))$ defined in terms of Drinfel'd generators, the coproduct structure is highly non-trivial (its existence is not clear a priori), and we do not have the analogue of the automorphisms $\phi_s$. To overcome such difficulties we propose the PBW argument in this paper, which is independent of coproduct structures.

\paragraph{Main results.} In this paper, we study  finite-dimensional representations of the quantum affine superalgebras $U_q(\mathcal{L}\mathfrak{sl}(M,N))$ for $M, N \in \BZ_{>0}$ (possibly $M = N$).
First, we prove the Drinfel'd type triangular decomposition.

\noindent \textbf{\underline{Theorem \ref{thm: triangular decomposition}}}. {\it The following multiplication map is an isomorphism of vector superspaces:
\[ U_q^-(\mathcal{L}\mathfrak{sl}(M,N)) \stensor U_q^0(\mathcal{L}\mathfrak{sl}(M,N)) \stensor U_q^+(\mathcal{L}\mathfrak{sl}(M,N)) \longrightarrow \Ud,\ \ a \stensor b \stensor c \mapsto abc. \] Furthermore, the three subalgebras above admit presentations as superalgebras. }

With respect to this triangular decomposition, we can define the  Verma modules $\Verma(\Lambda)$, which are parametrised by the  linear characters $\Lambda$ on $U_q^0(\mathcal{L}\mathfrak{sl}(M,N))$, and are isomorphic to $U_q^-(\mathcal{L}\mathfrak{sl}(M,N))$ as vector superspaces. These Verma modules are important as it is shown that when $M \neq N$, all finite-dimensional simple $\Ud$-modules are their quotients up to modification by one-dimensional modules. We are led to consider the existence of finite-dimensional non-zero quotients of $\Verma(\Lambda)$, the so-called {\it modules of highest weight $\Lambda$}.


Let $V$ be a finite-dimensional quotient of $\Verma(\Lambda)$, with a non-zero even highest weight vector $v_{\Lambda}$. When $1 \leq i \leq M+N-1$ and $i \neq M$, the subalgebra $\widehat{U}_i$ generated by $X_{i,n}^{\pm}, K_i, h_{i,s}$ for $n \in \BZ, s \in \BZ_{\neq 0}$ is isomorphic to $U_{q_i}(\mathcal{L}\mathfrak{sl}_2)$. As $\widehat{U}_i v_{\Lambda}$ is finite-dimensional, from the highest weight representation theory of $U_q(\mathcal{L}\mathfrak{sl}_2)$ we conclude that there exists a Drinfel'd polynomial $P_i \in 1 + z\mathbb{C}[z]$ such that (see \S \ref{sec: 2.1} below for the $\phi_{i,n}^{\pm}$) 
\begin{align}
\sum_{n \in \BZ}\phi_{i,n}^{\pm} z^n v_{\Lambda} = q_i^{\deg P_i} \frac{P_i(z q_i^{-1})}{P_i(z q_i)}v_{\Lambda} \in V[[z^{\pm 1}]]\ \ \mathrm{and}\ (X_{i,0}^-)^{1 + \deg P_i} v_{\Lambda} = 0. \label{equ:Verma-Weyl 1}
\end{align}
On the other hand, the subalgebra $\widehat{U}_M$ is no longer $U_q(\mathcal{L}\mathfrak{sl}_2)$, but a superalgebra with simpler structure. As $\widehat{U}_M v_{\Lambda}$ is finite-dimensional, we can eventually find another Drinfel'd polynomial 
\begin{align} 
Q(z) = \sum_{s=0}^d a_s z^s \in 1  + z \mathbb{C}[z]\ \ \mathrm{such\ that}\ \sum_{s=0}^d a_s X_{M,d-s}^- v_{\Lambda} = 0.    \label{equ:Verma-Weyl 2}
\end{align}
Now let us set
\begin{align}
\Lambda (K_M) = c,\ \Lambda(\sum_{n \in \BZ} \frac{\phi_{M,n}^+ - \phi_{M,n}^-}{q-q^{-1}} z^n) = \sum_{n \in \BZ} f_n z^n = f(z) \in \mathbb{C}[[z,z^{-1}]], \label{equ:Verma-Weyl 3}
\end{align}
then we see that
\begin{align}
f_0 = \frac{c-c^{-1}}{q-q^{-1}},\ \ \ Q(z)f(z) = 0. \label{equ:Verma-Weyl 4}
\end{align}
The linear character $\Lambda$ is completely determined by  $\underline{P} = (P_i)$ and $(f,c)$ in view of Equations \eqref{equ:Verma-Weyl 1}-\eqref{equ:Verma-Weyl 4}.  We come out with the set $\mathcal{R}_{M,N}$ of highest weights consisting of $\Lambda = (\underline{P},f,c)$ such that there exists $Q(z)$ satisfying Relations \eqref{equ:Verma-Weyl 1}-\eqref{equ:Verma-Weyl 4}. For such $(\Lambda, Q)$, motivated by the theory of Weyl modules for quantum affine algebras \cite{CP3},  we define the Weyl module $\Weyl(\Lambda;Q)$ as the quotient of $\Verma(\Lambda)$ by Relations \eqref{equ:Verma-Weyl 1}-\eqref{equ:Verma-Weyl 2}. Hence {\it all finite-dimensional non-zero quotients of $\Verma(\Lambda)$, if exist, should be quotients of $\Weyl(\Lambda;Q)$ for some $Q$.} The sufficiency of restrictions \eqref{equ:Verma-Weyl 1}-\eqref{equ:Verma-Weyl 4} on the linear characters is guaranteed by

\noindent \textbf{\underline{Theorem \ref{thm: main}}}. {\it For all $\Lambda = (\underline{P},f,c) \in \mathcal{R}_{M,N}$ and $Q \in 1 + z \mathbb{C}[z]$ such that $Qf = 0$, $\deg Q < \dim \Weyl(\Lambda;Q) < \infty$.}

In consequence, when $M \neq N$, as remarked above, {\it up to  modification by some one-dimensional modules, finite-dimensional simple $\Ud$-modules are parametrised by their highest weights $\Lambda \in \mathcal{R}_{M,N}$.} 

The first inequality $\deg Q < \dim \Weyl(\Lambda;Q)$ comes from a detailed analysis of some weight subspaces of $\Weyl(\Lambda;Q)$, using firmly the triangular decomposition Theorem \ref{thm: triangular decomposition}. Indeed, we shall see that the $\Ud$-module structure on $\Weyl(\Lambda;Q)$ determines the parameter $(\Lambda;Q)$ uniquely, which justifies the definition of a highest weight.  For the proof of $\Weyl(\Lambda;Q)$ being finite-dimensional, we argue by induction on $(M,N)$ (this explains the reason for considering also $M = N$). We use a system of linear generators for the vector superspace $\Ud$, the so-called PBW generators,  to control the size of the Weyl modules. To be more precise, let
\begin{displaymath}
\Delta := \{ \alpha_i + \alpha_{i+1} + \cdots + \alpha_j\ |\ 1 \leq i \leq j \leq M+N-1 \}
\end{displaymath} 
be the set of positive roots of the Lie superalgebra $\mathfrak{sl}(M,N)$ with the  ordering: $\alpha_i + \cdots + \alpha_j < \alpha_{i'} + \cdots + \alpha_{j'}$ if either $i < i'$ or $i = i', j < j'$.   For $(\beta, n) \in \Delta \times \BZ$, we define the root vector $X_{\beta}(n) \in U_q^+(\mathcal{L}\mathfrak{sl}(M,N))$ as quantum brackets in such a way (Definition \ref{def: positive root vectors}) that finally  

\noindent \textbf{\underline{Theorem \ref{thm: pbw}}}. {\it The vector superspace $U_q^+(\mathcal{L}\mathfrak{sl}(M,N))$ is spanned by $\prod\limits_{\beta \in \Delta}^{\rightarrow} (\prod\limits_{i=1}^{c_{\beta}} X_{\beta}(n_{i,\beta}) )$ where $c_{\beta} \in \mathbb{Z}_{\geq 0}$ for $\beta \in \Delta$ and $n_{i,\beta} \in \mathbb{Z}$ for $1 \leq i \leq c_{\beta}$.} 

The proof of the PBW theorem above is a combinatorial argument by inductions on $(M,N)$ and on the length of weights. We have not considered the problem of linear independence, which is beyond the scope of this paper.

We remark that Equation \eqref{equ:Verma-Weyl 2} is by no means superficial. Indeed, for $\Lambda \in \mathcal{R}_{M,N}$,  the quotient of $\Verma(\Lambda)$ by Relation \eqref{equ:Verma-Weyl 1}, denoted by $\Weyl(\Lambda)$,  is infinite-dimensional. We call $\Weyl(\Lambda)$ the {\it universal Weyl module} in the sense that all integrable quotients of $\Verma(\Lambda)$ remain quotients of $\Weyl(\Lambda)$. In particular, contrary to the case of quantum affine algebras, {\it integrable highest weight $\nRightarrow$ finite-dimensional highest weight}.

\paragraph*{}\ \ The paper is organised as follows. In \S \ref{sec: 2}, we remind the notion of a Weyl module for the quantum affine algebra $U_q(\mathcal{L}\mathfrak{sl}_N)$, and that of a Kac module for the quantum superalgebra $U_q(\mathfrak{gl}(M,N))$.
In \S \ref{sec: 3}, we define the quantum affine superalgebra $U_q(\mathcal{L}\mathfrak{sl}(M,N))$ and its enlargement $U_q(\mathcal{L}'\mathfrak{sl}(M,N))$ in terms of Drinfel'd currents, following Yamane \cite{Yam2}. Here, the enlargement is needed to avoid the problem of linear dependence among the simple roots of $\mathfrak{sl}(M,N)$. We prove a triangular decomposition (Theorem \ref{thm: triangular decomposition}) in terms of Drinfel'd currents, following the argument of \cite{Jantzen,He}. Then we define the root vectors (Definition \ref{def: positive root vectors}) and prove  Theorem \ref{thm: pbw}. In \S \ref{sec: 4}, the notion of a highest weight, the Verma modules $\Verma(\Lambda)$, the Weyl modules $\Weyl(\Lambda;Q)$, and  the relative simple modules $\simple'(\Lambda)$ are defined. We prove that the Weyl modules are always finite-dimensional and non-zero (Theorem \ref{thm: main}) by using the triangular decomposition and the PBW theorem. When $M \neq N$,  we conclude the highest weight classification of  finite-dimensional simple $U_q(\mathcal{L}\mathfrak{sl}(M,N))$-modules (Proposition \ref{prop: highest weight representations}). The universal Weyl modules are introduced to study integrability property.

In \S \ref{sec: 5}, we recall Yamane's isomorphism (Theorem \ref{thm: two presentations}) between Drinfel'd and Chevalley presentations for $\Ud$ in the case $M \neq N$. From this isomorphism, we deduce  a formula for the highest weight of the tensor product of two highest weight vectors (Corollary \ref{cor: tensor product highest weight}) and henceforth a commutative monoid structure on the set $\mathcal{R}_{M,N}$ of highest weight. From Zhang's evaluation morphisms (Proposition \ref{prop: evaluation morphisms Chevalley}) we construct explicitly some simple $\Ud$-modules (Proposition \ref{prop: evaluation modules}). 

\S \ref{sec: final section} is left to further discussions. We include in the two appendixes the related calculations that are needed in the triangular decomposition and the coproduct formulae for some Drinfel'd currents.  

\noindent
\textbf{Acknowledgement:} the author is grateful to his supervisor David Hernandez for the discussions, to Marc Rosso for kindly giving his preprint \cite{Rosso2}, and to Xin Fang, Jyun-Ao Lin and Mathieu Mansuy for the discussions and for pointing out useful references.

\section{Preliminaries}  \label{sec: 2}
We recall the highest weight representation theories for the quantum affine algebra $U_q(\mathcal{L}\mathfrak{sl}_N)$ and the quantum superalgebra $U_q(\mathfrak{gl}(M,N))$. Here we use $\mathfrak{gl}$ instead of $\mathfrak{sl}$ to avoid the problem of linear dependence among simple roots when $M = N$ (see Notation \ref{notation: weight}). 
\subsection{Weyl modules for the quantum affine algebra $U_q(\mathcal{L}\mathfrak{sl}_N)$}  \label{sec: 2.1}
Fix $N \in \mathbb{Z}_{\geq 2}$. Let $(a_{i,j}) \in \mathrm{Mat}(N-1,\BZ)$ be a Cartan matrix for the simple Lie algebra $\mathfrak{sl}_N$ with 
\[ a_{i,j} = 2 \delta_{i,j} - \delta_{i,j-1} - \delta_{i,j+1}. \]
Following Drinfel'd, the quantum affine algebra $U_q(\mathcal{L}\mathfrak{sl}_N)$ is an algebra with \cite[Theorem 4.7]{Beck1}:
\begin{itemize}
\item[(a)] generators $X_{i,n}^{\pm}, h_{i,s}, K_i^{\pm 1}$ with $n \in \BZ, s \in \BZ_{\neq 0}, 1 \leq i \leq N-1$; 
\item[(b)] relations for $1 \leq i,j \leq N-1, m,n,k \in \BZ, s,t \in \BZ_{\neq 0}$
\begin{align*}
& K_iK_i^{-1} = K_i^{-1}K_i = 1,\ [K_i,K_j] = [K_i, h_{j,s}] = [h_{i,s},h_{j,t}] = 0, \\
& K_i X_{j,n}^{\pm} K_i^{-1} = q^{\pm a_{i,j}} X_{j,n}^{\pm},\ [h_{i,s}, X_{j,n}^{\pm}] = \pm \frac{[s a_{i,j}]_q}{s} X_{j,n+s}^{\pm}, \\
& [X_{i,m}^+, X_{j,n}^-] = \delta_{i,j} \frac{\phi_{i,m+n}^+ - \phi_{i,m+n}^-}{q-q^{-1}}, \\
& [X_{i,m}^{\pm}, X_{j,n}^{\pm}] = 0 \ \ \ \ \mathrm{if}\ |i-j| > 1, \\
& X_{i,m+1}^{\pm} X_{j,n}^{\pm} - q^{\pm a_{i,j}} X_{j,n}^{\pm} X_{i,m+1}^{\pm} = q^{\pm a_{i,j}} X_{i,m}^{\pm} X_{j,n+1}^{\pm} - X_{j,n+1}^{\pm} X_{i,m}^{\pm} \ \ \ \ \mathrm{if}\ |i-j| \leq 1, \\
&  [X_{i,m}^{\pm}, [X_{i,n}^{\pm}, X_{j,k}^{\pm}]_{q^{-1}} ]_q +  [X_{i,n}^{\pm}, [X_{i,m}^{\pm}, X_{j,k}^{\pm}]_{q^{-1}} ]_q = 0 \ \ \ \mathrm{if}\ |i-j| = 1.
\end{align*}
\end{itemize}
Here $[n]_q = \frac{q^n-q^{-n}}{q-q^{-1}}, [a,b]_u = ab - u ba$ and $[,] = [,]_1$. The $\phi_{i,m}^{\pm}$ are defined by the generating series 
\[ \sum_{n \in \BZ} \phi_{i,m}^{\pm} z^m = K_i^{\pm 1} \exp (\pm(q-q^{-1})\sum_{s \in \BZ_{>0}} h_{i,\pm s} z^{\pm s} ) \in U_q(\mathcal{L}\mathfrak{sl}_N)[[z^{\pm 1}]]. \]
Note that $U_q(\mathcal{L}\mathfrak{sl}_N)$ has a structure of Hopf algebra from its Chevalley presentation.

Let $\Lambda = (P_i(z): 1 \leq i \leq N-1) \in (1 + z \mathbb{C}[z])^{N-1}$. The {\it Weyl module}, $\Weyl(\Lambda)$, is the $U_q(\mathcal{L}\mathfrak{sl}_N)$-module generated by $v_{\Lambda}$ with relations (see \cite[\S 4]{CP3}, or the review \cite[\S 3.4]{CH} where we borrow the notations):
\begin{align}  
& X_{i,n}^+ v_{\Lambda} = 0 \ \  \ \ \mathrm{for}\ n \in \BZ,\ 1 \leq  i  \leq N - 1, \label{rel: Weyl 0} \\
& \sum_{n \in \BZ} \phi_{i,n}^{\pm} v_{\Lambda} z^n = q^{\deg P_i} \frac{P_i(zq^{-1})}{P_i(z q)}v_{\Lambda} \in \mathbb{C}v_{\Lambda}[[z^{\pm 1}]] \ \ \ \ \mathrm{for}\ 1 \leq i \leq N-1, \label{rel: Weyl 1} \\
& (X_{i,0}^-)^{1 + \deg P_i} v_{\Lambda} = 0 \ \ \ \mathrm{for}\ 1 \leq i \leq N-1. \label{rel: Weyl 2}
\end{align} 
Let $V$ be an $U_q(\mathcal{L}\mathfrak{sl}_N)$-module. We say that $V$ is {\it integrable} if the actions of $X_{i,0}^{\pm}$ for $1 \leq i \leq N-1$ are locally nilpotent. We say that $V$ is of {\it highest weight } $\Lambda$ if $V$ is generated by a vector $v$ satisfying Relations \eqref{rel: Weyl 0}-\eqref{rel: Weyl 1}. 

We reformulate \cite[Theorem 3.3]{CP2} and \cite[Proposition 4.6]{CP3} in the case of $\mathfrak{sl}_N$ as follows.
\begin{theorem}  \label{thm: Weyl modules for quantum affine algebras}
(a) For  all $\Lambda \in (1+z\mathbb{C}[z])^{N-1}$, we have $0 < \dim \Weyl(\Lambda) < \infty$, and $\Weyl(\Lambda)$ has a unique quotient which is a simple $U_q(\mathcal{L}\mathfrak{sl}_N)$-module, denoted by $\simple(\Lambda)$.

(b) All finite-dimensional simple $U_q(\mathcal{L}\mathfrak{sl}_N)$-modules are of the form $\simple(\Lambda) \otimes \mathbb{C}_{\theta}$ where $\Lambda \in (1 + z \mathbb{C}[z])^{N-1}$ and $\mathbb{C}_{\theta}$ is a one-dimensional $U_q(\mathcal{L}\mathfrak{sl}_N)$-module.

(c) All integrable modules of highest weight $\Lambda$ are quotients of $\Weyl(\Lambda)$, in particular, they are finite-dimensional. 
\end{theorem}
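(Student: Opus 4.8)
The plan is to follow the strategy of Chari--Pressley, reducing all three parts to the representation theory of the rank-one subalgebras. For each $1 \leq i \leq N-1$, the generators $X_{i,n}^{\pm}, h_{i,s}, K_i^{\pm 1}$ span a copy of $U_q(\mathcal{L}\mathfrak{sl}_2)$ inside $U_q(\mathcal{L}\mathfrak{sl}_N)$, and the Drinfel'd relations yield a triangular decomposition $U_q(\mathcal{L}\mathfrak{sl}_N) = U^- U^0 U^+$, with $U^\pm$ generated by the $X_{i,n}^{\pm}$ and $U^0$ by the $K_i^{\pm 1}, h_{i,s}$. Relative to this decomposition, $v_{\Lambda}$ is a highest weight vector and $\Weyl(\Lambda) = U^- v_{\Lambda}$. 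All structural statements will be obtained by transporting the known $U_q(\mathcal{L}\mathfrak{sl}_2)$-picture along these embeddings and then globalising over the simple roots.

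For part (a), I would first prove $\Weyl(\Lambda) \neq 0$ by exhibiting a concrete nonzero module of highest weight $\Lambda$ satisfying \eqref{rel: Weyl 2}: writing each $P_i$ as a product of linear factors and forming the corresponding tensor product of fundamental evaluation representations, the tensor product of highest weight vectors is annihilated by all $X_{i,n}^+$, has $\phi^{\pm}$-eigenvalue $q^{\deg P_i} P_i(zq^{-1})/P_i(zq)$, and satisfies the truncation \eqref{rel: Weyl 2}; by universality $\Weyl(\Lambda)$ surjects onto the submodule it generates, so $\Weyl(\Lambda) \neq 0$. Finite-dimensionality is the crux. From the defining relations the operators $X_{i,0}^{\pm}$ act locally nilpotently, so $\Weyl(\Lambda)$ is integrable; applying the rank-one theory inside each $\widehat{U}_i$ I would show that for any weight vector $v$ the subspace $\widehat{U}_i^- v$ is finite-dimensional, with $\mathfrak{sl}_N$-weights confined to a finite $\alpha_i$-string whose length is governed by $\deg P_i$. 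Combining these bounds over all $i$ shows that only finitely many weights occur and that each weight space is finite-dimensional, whence $\dim \Weyl(\Lambda) < \infty$. Finally, since $\Weyl(\Lambda)$ is generated by the single weight line $\BC v_{\Lambda}$, the sum of all submodules not containing $v_{\Lambda}$ is the unique maximal proper submodule, and the quotient is the unique simple quotient $\simple(\Lambda)$.

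For part (b), let $V$ be finite-dimensional and simple. Since the $X_{i,n}^+$ raise the $\mathfrak{sl}_N$-weight and $V$ has finitely many weights, the subspace killed by all $X_{i,n}^+$ is nonzero; a joint eigenvector $v$ there for the commuting $\phi_{i,n}^{\pm}$ is a highest weight vector and generates $V$ by simplicity. Integrability of $V$ together with the $U_q(\mathcal{L}\mathfrak{sl}_2)$-classification forces each eigenvalue series $\sum_n \phi_{i,n}^{\pm} z^n$ acting on $v$ to be the expansion of a rational function $c_i q^{\deg P_i} P_i(zq^{-1})/P_i(zq)$ for some $P_i \in 1 + z\BC[z]$ and scalar $c_i$; twisting by a one-dimensional module $\BC_{\theta}$ that absorbs the constants $c_i$ normalises the highest weight to $\Lambda = (P_i) \in (1+z\BC[z])^{N-1}$, and comparison with part (a) gives $V \cong \simple(\Lambda) \otimes \BC_{\theta}$. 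For part (c), if $V$ is integrable of highest weight $\Lambda$ then on its generator the local nilpotency of $X_{i,0}^-$ combined with the rank-one theory yields exactly $(X_{i,0}^-)^{1+\deg P_i} v = 0$, so $V$ satisfies all the defining relations \eqref{rel: Weyl 0}--\eqref{rel: Weyl 2} of $\Weyl(\Lambda)$ and is therefore a quotient of it; finite-dimensionality then follows from part (a).

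The main obstacle is the finite-dimensionality in part (a), and specifically the rank-one bound on $\widehat{U}_i^- v$. The difficulty is that there are infinitely many loop generators $X_{i,n}^-$ ($n \in \BZ$), so boundedness of weights alone does not bound the dimension; one must show that the single truncation relation $(X_{i,0}^-)^{1+\deg P_i} v = 0$ propagates, via the quantum Garland identities relating the $X_{i,n}^-$ and the $h_{i,s}$, to control the entire $U_q(\mathcal{L}\mathfrak{sl}_2)$-submodule generated by $v$. Establishing these identities and the resulting finiteness is the technical heart, and it is exactly the content imported from \cite[Theorem 3.3]{CP2} and \cite[Proposition 4.6]{CP3}.
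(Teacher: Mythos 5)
The paper offers no proof of this theorem: it is stated purely as a reformulation of \cite[Theorem 3.3]{CP2} and \cite[Proposition 4.6]{CP3}, so the "paper's proof" is the citation itself. Your sketch is a faithful outline of exactly that Chari--Pressley argument (evaluation/tensor-product construction for non-vanishing, rank-one $U_q(\mathcal{L}\mathfrak{sl}_2)$ theory plus integrability for finiteness and for parts (b), (c)), and you correctly identify and defer the genuine technical core --- the Garland-type identities controlling the loop generators $X_{i,n}^-$ --- to the same two references, so your approach coincides with the paper's treatment.
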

The Weyl modules $\Weyl(\Lambda)$ are generally non-simple, due to the non-semi-simplicity of the category of finite-dimensional  $U_q(\mathcal{L}\mathfrak{sl}_N)$-modules, a phenomenon that appears also in the classical case $U(\mathcal{L}\mathfrak{sl}_N)$.
\subsection{Kac modules for  the quantum superalgebra $U_q(\mathfrak{gl}(M,N))$}   \label{sec: 2.2}
From this section on, we consider superalgebras. By definition, a {\it superalgebra} is an (associative and unitary) algebra $A$ with a compatible $\super$-grading $A = A_{\even} \oplus A_{\odd}$, i.e. $A_{i}A_j \subseteq A_{i+j}$ for $i,j \in \super$. We remark that a superalgebra can be defined by a presentation: generators, their $\super$-degrees, and their defining relations. 

Let $V = V_{\even} \oplus V_{\odd}$ be a vector superspace. Write $|v| = i$ for $i \in \super$ and $v \in V_i$.   Endow the algebra of endomorphisms $\End(V)$ with the following canonical superalgebra structure:
\begin{displaymath}
\End(V)_{i} := \{ f \in \End(V)\ |\ f(V_j) \subseteq V_{i+j}\ \mathrm{for}\ j \in \super  \}. 
\end{displaymath} 
By a representation of a superalgebra $A$, we mean a couple $(\rho, V)$ where $V$ is a vector superspace and $\rho: A \longrightarrow \End(V)$ is a homomorphism of superalgebras. Call $V$ an $A$-module in this case. When $A$ is a Hopf superalgebra, given two representations  $(\rho_i,V_i)_{i=1,2}$, we can form another representation $((\rho_1 \stensor \rho_2) \Delta, V_1 \stensor V_2 )$. Here $\stensor$ means the {\it super tensor product} and $\Delta: A \longrightarrow A \stensor A$ is the coproduct. 
  
On the other hand, a {\it Lie superalgebra} is by definition a vector superspace $V = V_{\even} \oplus V_{\odd}$ with a Lie bracket $[,]: V \times V \longrightarrow V$ such that $[V_i,V_j] \subseteq V_{i+j}$ and for $a \in V_i, b \in V_j, c \in V_k$ (with $i,j,k \in \super$)
\begin{align*}
& [a,b] = - (-1)^{ij} [b,a]  \\
& [a,[b,c]] = [[a,b],c] + (-1)^{ij} [b,[a,c]].
\end{align*}
When $A$ is a superalgebra, $[a,b] = ab - (-1)^{ij} ba$ for $a \in A_i, b \in A_j$ makes $A$ into a Lie superalgebra. In particular, when $V$ is the vector superspace with $V_{\even} = \mathbb{C}^M$ and $V_{\odd} = \mathbb{C}^N$, we write $\End(V)$ as $\mathfrak{gl}(M,N)$ to emphasis its Lie superalgebra structure. There is a super-trace on $\mathfrak{gl}(M,N)$ given by
\begin{displaymath}
\str: \mathfrak{gl}(M,N) \longrightarrow \mathbb{C},\ \ f + g \mapsto \mathrm{tr}_{V_{\even}}(f|_{V_{\even}}) - \mathrm{tr}_{V_{\odd}}(f|_{V_{\odd}}) \ \ \mathrm{for}\ f \in \mathfrak{gl}(M,N)_{\even},\ g \in \mathfrak{gl}(M,N)_{\odd}.
\end{displaymath} 
And $\mathfrak{sl}(M,N) := \ker (\str)$ is a sub-Lie-superalgebra of $\mathfrak{gl}(M,N)$. We refer to \cite{Kac1,Sch} for the classification of finite-dimensional simple Lie superalgebras in terms of Dynkin diagrams and Cartan matrices. 

Fix $M,N \in \mathbb{Z}_{\geq 1}$. Equip the free $\BZ$-module $\bigoplus\limits_{i=1}^{M+N} \mathbb{Z}\epsilon_i$ with the following bilinear from 
\begin{align}
& (\epsilon_i, \epsilon_j) = l_i \delta_{i,j},\ \ l_i = \begin{cases}
1 & \mathrm{if}\ 1 \leq i \leq M, \\
-1 & \mathrm{if}\ M+1 \leq i \leq M+N.
\end{cases}   \label{equ: bilinear form canonnical}
\end{align} 
For $1 \leq i \leq M+N-1$, set $q_i = q^{l_i}$.
The quantum superalgebra $U_q(\mathfrak{gl}(M,N))$ is a superalgebra with:
\begin{itemize}
\item[(a)] generators $t_i^{\pm 1}, e_j^{\pm}$ where $1 \leq i \leq M+N, 1 \leq j \leq M+N-1$;
\item[(b)] $\super$-grading $|e_M^{\pm}| = \odd$ and $|t_M^{\pm 1}| = |t_i^{\pm 1}| = |e_i^{\pm}| = \even$ for $1 \leq i \leq M+N-1, i \neq M$;
\item[(c)] relations   for  $1 \leq i \leq M+N, 1 \leq j,k \leq M+N-1$ \cite[Proposition 10.4.1]{Yam1}
\begin{align*}
&t_i t_i^{-1} = 1 = t_i^{-1} t_i,\ t_i e_j^{\pm} t_i^{-1} = q^{\pm l_i (\epsilon_i, \epsilon_j - \epsilon_{j+1}) } e_j^{\pm}, \\
&[e_j^+, e_k^{-}] = \delta_{j,k} \frac{t_j^{l_j} t_{j+1}^{-l_{j+1}} - t_j^{-l_j} t_{j+1}^{l_{j+1}} }{q_j - q_j^{-1}}, \\
& [e_j^{\pm}, [e_j^{\pm}, e_k^{\pm}]_{q^{-1}}]_q = 0 \ \ \ \mathrm{if}\ c_{j,k} = \pm 1, j \neq M, \\
& [[[e_{M-1}^{\pm}, e_M^{\pm}]_q, e_{M+1}^{\pm}]_{q^{-1}}, e_M^{\pm}] = 0 \ \ \ \ \mathrm{when}\ M,N > 1,
\end{align*}
\end{itemize}
where the super-brackets are: $[,] = [,]_1, [a,b]_u = a b - (-1)^{|a||b|} u ba$ for $a,b$ homogeneous. $U_q(\mathfrak{gl}(M,N))$ is endowed with a Hopf superalgebra structure as follows \cite[Eq. (2.5)]{BKK}
\begin{equation}  \label{equ: coproduct BKK}
\Delta(t_i) = t_i \stensor t_i,\ \Delta (e_j^+) = 1 \stensor e_j^+ + e_j^+ \stensor t_j^{-l_j} t_{j+1}^{l_{j+1}},\ \Delta (e_j^-) = t_j^{l_j} t_{j+1}^{-l_{j+1}} \stensor e_j^- + e_j^- \stensor 1. 
\end{equation} 
We remark that the subalgebra $U_q(\mathfrak{sl}(M,N))$, generated by $e_i^{\pm}, t_i^{l_i}t_{i+1}^{-l_{i+1}}$ for $1 \leq i \leq M+N-1$ is a sub-Hopf-superalgebra.
Let  $e^+ := [\cdots[[e_1^+, e_2^+]_{q_2}, e_3^+]_{q_3}],\cdots,e_{M+N-1}^+]_{q_{M+N-1}}$. The following lemma is needed later. 
\begin{lem} \label{lem: Yamane commutation relations}
(see \cite[Lemma 5.2.1]{Yam1}) For $2 \leq j \leq M+N-1$, $[e^+, e_j^+]_{q^{(\epsilon_1 - \epsilon_{M+N}, \epsilon_{j+1} - \epsilon_j)}} = 0$.
\end{lem}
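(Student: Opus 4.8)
The plan is to prove a stronger, more symmetric family of relations for the partial products of $e^+$ and then read off the lemma as the top case. Write $E_k := [\cdots[[e_1^+,e_2^+]_{q_2},e_3^+]_{q_3}\cdots,e_k^+]_{q_k}$ for $1\le k\le M+N-1$, so that $E_1=e_1^+$, $E_k=[E_{k-1},e_k^+]_{q_k}$, $e^+=E_{M+N-1}$, and (by an immediate induction) $E_k$ has weight $\epsilon_1-\epsilon_{k+1}$. Evaluating the bilinear form gives $(\epsilon_1-\epsilon_{M+N},\epsilon_{j+1}-\epsilon_j)=\delta_{j,M+N-1}$ for $2\le j\le M+N-1$, so the lemma says precisely that $e^+$ graded-commutes with $e_j^+$ for $2\le j\le M+N-2$ and that $[e^+,e_{M+N-1}^+]_q=0$. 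I would obtain both from two claims, established for all $2\le k\le M+N-1$: (R1) $[E_k,e_j^+]=0$ for $2\le j\le k-1$; and (R2) $[E_k,e_k^+]_{q^{-l_{k+1}}}=0$. Since $-l_{M+N}=1$, specialising to $k=M+N-1$ turns (R1) and (R2) into the two halves of the statement.

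The computations rest on the super $q$-Leibniz rule $[XY,Z]_{u_1u_2}=X[Y,Z]_{u_2}+(-1)^{|Y||Z|}u_2[X,Z]_{u_1}Y$ for homogeneous $X,Y,Z$ (a one-line check from the definition $[a,b]_u=ab-(-1)^{|a||b|}uba$), together with two elementary facts. First, far commutation: since $e_j^+$ graded-commutes with each of $e_1^+,\dots,e_k^+$ when $j\ge k+2$, it graded-commutes with $E_k$; in particular $e_k^+$ graded-commutes with $E_{k-2}$. Second, re-association: pulling $E_{k-2}$ through $e_k^+$ with the Leibniz rule and $[E_{k-2},e_k^+]=0$ yields $E_k=[E_{k-2},F_k]_{q_{k-1}}$ with $F_k:=[e_{k-1}^+,e_k^+]_{q_k}$. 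The value of this identity is that it isolates the adjacent pair $(e_{k-1}^+,e_k^+)$, on which the quantum Serre relations bite.

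For (R2) I would use re-association and $[E_{k-2},e_k^+]=0$ to reduce $[E_k,e_k^+]_{q^{-l_{k+1}}}$ to a left/right multiple of $[F_k,e_k^+]_{q^{-l_{k+1}}}=[[e_{k-1}^+,e_k^+]_{q_k},e_k^+]_{q^{-l_{k+1}}}$. For $k\ne M$ one has $l_k=l_{k+1}$, so $q_k\cdot q^{-l_{k+1}}=1$ and $q_k+q^{-l_{k+1}}=q+q^{-1}$; expanding then gives exactly $(e_k^+)^2e_{k-1}^+-(q+q^{-1})e_k^+e_{k-1}^+e_k^++e_{k-1}^+(e_k^+)^2$, which vanishes by the Serre relation $[e_k^+,[e_k^+,e_{k-1}^+]_{q^{-1}}]_q=0$ (legitimate since the repeated node $k\ne M$). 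For $k=M$ one expands directly: $q_M=q=q^{-l_{M+1}}$ makes the cross term cancel, and $(e_M^+)^2=0$ (the relation at the isotropic odd node) kills the rest. The (R1) proof proceeds by induction on $k$; the interior range $2\le j\le k-2$ is free, because $E_{k-1}$ graded-commutes with $e_j^+$ by the inductive hypothesis and $e_k^+$ does so as $|k-j|\ge2$, whence so does the $q$-bracket $E_k$.

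The only real work is the boundary value $j=k-1$ of (R1), which forces $k\ge3$. Expanding $[E_k,e_{k-1}^+]=[[E_{k-1},e_k^+]_{q_k},e_{k-1}^+]$ and feeding in (R2) for $E_{k-1}$, namely $[E_{k-1},e_{k-1}^+]_{q^{-l_k}}=0$, the remaining terms assemble into the Serre relation with repeated node $e_{k-1}^+$, closing the case provided $k-1\ne M$. The main obstacle is exactly $k-1=M$, i.e.\ $(k,j)=(M+1,M)$, where $e_M^+$ admits no cubic Serre relation and the argument above breaks. Here I would expand $[E_{M+1},e_M^+]=[[[E_{M-1},e_M^+]_{q},e_{M+1}^+]_{q^{-1}},e_M^+]$ and pull $E_{M-2}$ out of $E_{M-1}$ using the far commutations $[E_{M-2},e_M^+]=[E_{M-2},e_{M+1}^+]=0$; what survives is a left/right multiple of the four-term isotropic relation $[[[e_{M-1}^+,e_M^+]_q,e_{M+1}^+]_{q^{-1}},e_M^+]=0$ (matching $q_M=q$, $q_{M+1}=q^{-1}$), valid since $M,N>1$. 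The residual difficulty is pure bookkeeping: tracking the signs $(-1)^{|X||Z|}$ and the powers of $q$ through each Leibniz step so that the coefficients collapse exactly as predicted by the classical model $[E_{1,k+1},E_{j,j+1}]=\delta_{k+1,j}E_{1,j+1}-\delta_{j+1,1}E_{j,k+1}$, which already tells us the answer must be $0$.
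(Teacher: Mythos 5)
Your proof is correct in outline, and the steps you compress do go through; but note that the paper contains no proof of this lemma to compare against: it is quoted as is from Yamane (\cite[Lemma 5.2.1]{Yam1}), so what you have written is a genuinely self-contained reconstruction rather than an alternative to an argument in the text. Checking your pieces: the super $q$-Leibniz rule together with $[E_{k-2},e_k^+]=0$ does give the re-association $E_k=[E_{k-2},F_k]_{q_{k-1}}$; your (R2) then reduces to $[F_k,e_k^+]_{q^{-l_{k+1}}}=0$, which for $k\neq M$ is exactly the cubic Serre relation at node $k$ (using $q_k\,q^{-l_{k+1}}=1$ and $q_k+q^{-l_{k+1}}=q+q^{-1}$, valid precisely because $l_k=l_{k+1}$ when $k\neq M$) and for $k=M$ follows from $(e_M^+)^2=0$; feeding (R2) into the boundary case of (R1) yields, with all parities and signs accounted for, the identity $[E_k,e_{k-1}^+]=[E_{k-1},[e_k^+,e_{k-1}^+]_{q^{l_k}}]$, and the vanishing of the right-hand side for $k-1\neq M$ needs three inputs: far commutation of $E_{k-2}$ with $e_k^+$, the Serre relation with repeated node $e_{k-1}^+$, and (R2) used once more in the rewritten form $E_{k-2}c^2-(q+q^{-1})cE_{k-2}c+c^2E_{k-2}=0$ with $c=e_{k-1}^+$. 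This last point is slightly more than your phrase ``the remaining terms assemble into the Serre relation'' suggests, but the three inputs do collapse the expression --- it is the same computation as the rank-3 identity $[[a,b]_q,[b,c]_{q^{-1}}]=0$, which I verified lies in the span of the two Serre relations and the commutation $ac=ca$ --- so this is compression, not a gap. Your treatment of the exceptional case $k-1=M$ as a left/right multiple of the quartic isotropic relation is also exact, and it is consistent that this relation is only imposed when $M,N>1$, since $(k,j)=(M+1,M)$ forces $M\geq 2$ and $N\geq 2$. One caveat: your argument (necessarily) uses $[e_i^+,e_j^+]=0$ for $c_{i,j}=0$ and $(e_M^+)^2=0$; these relations are missing from the paper's transcription of the presentation in \S\ref{sec: 2.2}, but they are part of Yamane's Proposition 10.4.1 which the paper cites, so assuming them is the correct reading of the definition.
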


Let $\mathcal{S}_{M,N}$ be the set of $\Lambda = (\Lambda_i: 1 \leq i \leq M+N ) \in \mathbb{C}$ such that \cite[Eq. (13)]{Zh2}
\[ \Delta_i := \Lambda_i - \Lambda_{i+1} \in \mathbb{Z}_{\geq 0}\ \mathrm{for}\ 1 \leq i \leq M+N-1, i \neq M. \]
Let $\Lambda \in \mathcal{S}_{M,N}$. The {\it Kac module}, $\Kac(\Lambda)$, is the $U_q(\mathfrak{gl}(M,N))$-module generated by $v_{\Lambda}$, with $\super$-grading $\even$, and relations \cite[\S 3]{Zh2}
\begin{align}
& e_j^{+} v_{\Lambda} = 0\ \ \ \ \ \mathrm{for}\ 1 \leq j \leq M+N-1, \\
& t_i v_{\Lambda} = q^{\Lambda_i} v_{\Lambda} \ \ \ \ \mathrm{for}\ 1 \leq i \leq M+N, \\
& (e_j^-)^{1 + \Delta_j} v_{\Lambda} = 0 \ \ \ \ \mathrm{for}\ 1 \leq j \leq M+N-1,j \neq M.
\end{align}
We call it Kac module as it is a generalisation of Kac's induction module construction for Lie superalgebras \cite[Proposition 2.1]{Kac}. Note that we also have the notion of integrable modules (actions of the $e_j^{\pm}$ being locally nilpotent) and highest weight modules. The Kac modules in the category of finite-dimensional $U_q(\mathfrak{gl}(M,N))$-modules play the same role as the Weyl modules in that of finite-dimensional $U_q(\mathcal{L}\mathfrak{sl}_N)$-modules.
\begin{theorem} \label{thm: Kac modules for quantum superalgebra}
(a) For $\Lambda \in \mathcal{S}_{M,N}$, $0 < \dim \Kac(\Lambda) < \infty$, and $\Kac(\Lambda)$ has a unique quotient which is a simple $U_q(\mathfrak{gl}(M,N))$-module, denoted by $L(\Lambda)$.

(b) All finite-dimensional simple $U_q(\mathfrak{gl}(M,N))$-modules are of the form $L(\Lambda) \stensor \mathbb{C}_{\theta}$ where $\Lambda \in \mathcal{S}_{M,N}$ and $\mathbb{C}_{\theta}$ is a one-dimensional $U_q(\mathfrak{gl}(M,N))$-module.

(c) All integrable modules of highest weight $\Lambda$ are quotients of $\Kac(\Lambda)$, in particular, they are finite-dimensional.  
\end{theorem}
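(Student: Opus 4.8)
The plan is to realize $\Kac(\Lambda)$ as a quantum analogue of Kac's induced-module construction and to reduce every assertion to the representation theory of the even subalgebra together with a quantum Poincar\'{e}--Birkhoff--Witt basis. First I would isolate the \emph{even} subalgebra $U_q^{\mathrm{ev}}$ generated by the $t_i^{\pm 1}$ and by the $e_j^{\pm}$ with $j \neq M$. Since $c_{M-1,M+1} = 0$, the nodes $M-1$ and $M+1$ are not linked, so the two chains $\{e_j^{\pm}: j < M\}$ and $\{e_j^{\pm}: j > M\}$ commute and involve disjoint tori; hence $U_q^{\mathrm{ev}} \cong U_q(\mathfrak{gl}_M) \otimes U_q(\mathfrak{gl}_N)$. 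The hypothesis $\Lambda \in \mathcal{S}_{M,N}$, i.e. $\Delta_j = \Lambda_j - \Lambda_{j+1} \in \BZ_{\geq 0}$ for $j \neq M$, is exactly the dominance condition on each factor; since $q$ is not a root of unity, it guarantees a non-zero finite-dimensional simple $U_q^{\mathrm{ev}}$-module $L_0(\Lambda)$ of highest weight $\Lambda$, obtained as a tensor product of the corresponding simple $U_q(\mathfrak{gl}_M)$- and $U_q(\mathfrak{gl}_N)$-modules.

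Next I would use the $\BZ$-grading of $U_q(\mathfrak{gl}(M,N))$ in which $e_M^{\pm}$ has degree $\pm 1$ and $U_q^{\mathrm{ev}}$ has degree $0$, and let $\mathcal{P}$ be the parabolic subalgebra generated by $U_q^{\mathrm{ev}}$ together with all positive odd root vectors. The defining relations of $\Kac(\Lambda)$ assert precisely that $v_{\Lambda}$ is annihilated by every $e_j^+$ (in particular by the positive odd part) and cut down by the even Serre relations $(e_j^-)^{1+\Delta_j} v_{\Lambda} = 0$ for $j \neq M$; therefore $\mathcal{P} v_{\Lambda} = L_0(\Lambda)$ sits in degree $0$ and $\Kac(\Lambda) \cong \mathrm{Ind}_{\mathcal{P}}^{U_q(\mathfrak{gl}(M,N))} L_0(\Lambda)$. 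To measure its size I would invoke Yamane's quantum PBW basis: the $MN$ negative odd root vectors (anti)commute up to scalars and each squares to zero, so the subalgebra $U_q^{\mathrm{odd},-}$ they generate is a quantum exterior algebra of dimension $2^{MN}$, and $U_q(\mathfrak{gl}(M,N))$ is free of rank $2^{MN}$ as a right $\mathcal{P}$-module. This gives $\Kac(\Lambda) \cong U_q^{\mathrm{odd},-} \otimes L_0(\Lambda)$ as a vector superspace, whence $0 < \dim \Kac(\Lambda) = 2^{MN} \dim L_0(\Lambda) < \infty$; non-vanishing is immediate because $L_0(\Lambda) \neq 0$ embeds as the top (degree $0$) layer via $1 \otimes -$.

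For the remaining assertions I would argue along standard highest-weight lines. The torus yields a weight decomposition of $\Kac(\Lambda)$ in which $\Lambda$ occurs with multiplicity one and all other weights are strictly lower; thus any proper submodule avoids the $\Lambda$-line, the sum of all proper submodules is still proper, and $\Kac(\Lambda)$ admits a unique maximal submodule with simple quotient $L(\Lambda)$, proving (a). For (b), a finite-dimensional simple module $V$ contains an $e_j^+$-annihilated weight vector whose weight $\Lambda'$ is dominant at each even node, because finiteness of the $U_{q_j}(\mathfrak{sl}_2)$-string forces $\Delta'_j \in \BZ_{\geq 0}$ for $j \neq M$; after twisting by the one-dimensional module $\mathbb{C}_{\theta}$ that adjusts the torus scalars left undetermined in the $t_M$-direction, $V \cong L(\Lambda) \stensor \mathbb{C}_{\theta}$ with $\Lambda \in \mathcal{S}_{M,N}$. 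For (c), if $V$ is an integrable highest weight module of highest weight $\Lambda$, then local nilpotency of each $e_j^-$ on the cyclic generator, analysed inside the even $U_{q_j}(\mathfrak{sl}_2)$-subalgebra, forces $\Delta_j \in \BZ_{\geq 0}$ and $(e_j^-)^{1+\Delta_j} v_{\Lambda} = 0$ for $j \neq M$ (for $j = M$ the odd relation $(e_M^-)^2 = 0$ holds automatically), so $V$ is a quotient of $\Kac(\Lambda)$ and hence finite-dimensional.

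The main obstacle I anticipate is the quantum PBW input: proving that the negative odd part is genuinely the $2^{MN}$-dimensional quantum exterior algebra and that $U_q(\mathfrak{gl}(M,N))$ is free over the parabolic $\mathcal{P}$. Everything else is formal once this freeness is available, but controlling the odd root vectors --- in particular checking that they square to zero and that straightening their products introduces no extra dimensions --- requires the full strength of Yamane's structural results, and this is precisely where a commutation relation such as Lemma \ref{lem: Yamane commutation relations} enters.
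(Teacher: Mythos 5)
The paper contains no proof of Theorem \ref{thm: Kac modules for quantum superalgebra}: it is stated as a preliminary and quoted from the literature --- the defining relations of $\Kac(\Lambda)$ and the set $\mathcal{S}_{M,N}$ are cited from \cite{Zh2}, and the name ``Kac module'' refers to Kac's induction construction \cite{Kac} --- so there is no in-paper argument to compare against, only the argument of the cited sources. Your proposal is, in outline, exactly that argument (parabolic induction from the even subalgebra $U_q(\mathfrak{gl}_M)\otimes U_{q^{-1}}(\mathfrak{gl}_N)$, the quantum PBW theorem of \cite{Yam1} giving $\Kac(\Lambda)\cong U_q^{\mathrm{odd},-}\stensor L_0(\Lambda)$ of dimension $2^{MN}\dim L_0(\Lambda)$, then the usual highest-weight formalism for (a)--(c)), and your identification of the PBW/freeness statement as the crux is accurate. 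Two steps deserve more care than your ``therefore'', though neither is a fatal gap. First, the isomorphism between the \emph{presented} module $\Kac(\Lambda)$ and the induced module is not automatic: the easy direction (the induced module is a quotient of $\Kac(\Lambda)$, which already yields non-vanishing) follows by checking the cyclic vector of the induced module satisfies the defining relations, but the converse requires knowing that each $(e_j^-)^{1+\Delta_j}v_{\Lambda}$ is a \emph{singular} vector of the Verma module (killed by every $e_i^+$: by the $U_{q_j}(\mathfrak{sl}_2)$ computation when $i=j$, and by $[e_i^+,e_j^-]=0$ when $i\neq j$), so that the submodule it generates is $U_q^{\mathrm{odd},-}$ applied to the maximal submodule of the even Verma module; only then does freeness over the parabolic give $\Kac(\Lambda)\cong U_q^{\mathrm{odd},-}\stensor L_0(\Lambda)$ rather than merely a surjection. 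Second, in (b) the one-dimensional twist $\mathbb{C}_{\theta}$ must absorb not only the unconstrained scalar $t_M t_{M+1}$ at the odd node but also the signs $\pm q_j^{\Delta_j}$ that $\mathfrak{sl}_2$-theory allows at the even nodes; this works precisely because one-dimensional characters satisfy $\theta_j^{l_j}\theta_{j+1}^{-l_{j+1}}=\pm 1$ at every node, but your phrasing (``scalars left undetermined in the $t_M$-direction'') understates what the twist does.
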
 

\section{The quantum affine superalgebra $U_q(\mathcal{L}\mathfrak{sl}(M,N))$}  \label{sec: 3}
In this section, we recall the Drinfel'd realization of the quantum affine superalgebra $U_q(\mathcal{L}\mathfrak{sl}(M,N))$ following Yamane \cite[Theorem 8.5.1]{Yam2}.  We prove a triangular decomposition for this superalgebra. Then we give a system of linear generators of PBW type in terms of Drinfel'd currents. These turn out to be crucial for the development of finite-dimensional representations in the next section.
\subsection{Drinfel'd presentation of $U_q(\mathcal{L}\mathfrak{sl}(M,N))$} \label{sec: Drinfeld presentation}
Following Equation \eqref{equ: bilinear form canonnical}, define 
\begin{displaymath}
c_{i,j} := (\epsilon_i - \epsilon_{i+1}, \epsilon_j - \epsilon_{j+1})\ \ \  \mathrm{for}\ 1 \leq i, j \leq M+N-1.
\end{displaymath}
 Hence $(l_i c_{i,j})$ can be viewed as a Cartan matrix for the Lie superalgebra $\mathfrak{sl}(M,N)$.
\begin{definition}  \label{def: Drinfeld presentation}
 \cite[Theorem 8.5.1]{Yam2} $U_q(\mathcal{L}\mathfrak{sl}(M,N))$ is the superalgebra generated by $X_{i,n}^{\pm}, K_i^{\pm 1}, h_{i,s}$ for $1 \leq i \leq M+N-1, n \in \BZ, s \in \BZ_{\neq 0}$, with the $\super$-grading $|X_{M,n}^{\pm}| = \odd$ for $n \in \BZ$ and $\even$ for other generators, and with the following relations: $1 \leq i,j \leq M+N-1, m,n,k,u \in \mathbb{Z}, s,t \in \BZ_{\neq 0}$
\begin{align}
& K_iK_i^{-1} = 1 = K_i^{-1}K_i,\ [K_i, K_j] = [K_i, h_{j,s}] = [h_{i,s}, h_{j,t}] = 0,  \label{rel: Cartan}  \\
& K_i X_{j,n}^{\pm} K_i^{-1} = q^{\pm c_{i,j}} X_{j,n}^{\pm},\ [h_{i,s}, X_{j,n}^{\pm}] = \pm \frac{[sl_i c_{i,j}]_{q_i}}{s} X_{j,n+s}^{\pm},\label{rel: triangular1}  \\
& [X_{i,m}^{+}, X_{j,n}^{-}] = \delta_{i,j} \frac{\phi_{i,m+n}^+ - \phi_{i,m+n}^-}{q_i-q_i^{-1}}, \label{rel: triangular2}  \\
& [X_{i,m}^{\pm}, X_{j,n}^{\pm}] = 0 \ \ \ \mathrm{for}\ c_{i,j} = 0, \label{rel: Drinfel1}  \\
& X_{i,m+1}^{\pm} X_{j,n}^{\pm} - q^{\pm c_{i,j}} X_{j,n}^{\pm}X_{i,m+1}^{\pm} = q^{\pm c_{i,j}} X_{i,m}^{\pm}X_{j,n+1}^{\pm} - X_{j,n+1}^{\pm} X_{i,m}^{\pm} \ \ \ \mathrm{for}\ c_{i,j} \neq 0, \label{rel: Drinfeld2}  \\
& [X_{i,m}^{\pm}, [X_{i,n}^{\pm}, X_{j,k}^{\pm}]_{q^{-1}} ]_q +  [X_{i,n}^{\pm}, [X_{i,m}^{\pm}, X_{j,k}^{\pm}]_{q^{-1}} ]_q = 0 \ \ \ \mathrm{for}\ c_{i,j} = \pm 1, i \neq M, \label{rel: Serre3}  \\
& [[[X_{M-1,m}^{\pm}, X_{M,n}^{\pm}]_{q^{-1}}, X_{M+1,k}^{\pm}]_q, X_{M,u}^{\pm} ] + [[[X_{M-1,m}^{\pm}, X_{M,u}^{\pm}]_{q^{-1}}, X_{M+1,k}^{\pm}]_q, X_{M,n}^{\pm} ] = 0 \ \ \mathrm{when}\ M,N > 1.  \label{rel: ocillation4}
\end{align}
where the $\phi_{i,n}^{\pm}$ are given by the generating series
\begin{align}
\sum_{n \in \BZ} \phi_{i,n}^{\pm} z^n = K_i^{\pm 1} \exp (\pm(q_i-q_i^{-1})\sum_{s\in \BZ_{>0}} h_{i,\pm s} z^{\pm s} ) \in \Ud[[z^{\pm 1}]]
\end{align}
\end{definition} 
We understand that $U_q(\mathcal{L}\mathfrak{sl}(M,0)) = U_q(\mathcal{L}\mathfrak{sl}_M)$ and $U_q(\mathcal{L}\mathfrak{sl}(0,N)) = U_{q^{-1}}(\mathcal{L}\mathfrak{sl}_N)$.
We also need an extension of the superalgebra $U_q(\mathcal{L}\mathfrak{sl}(M,N))$. For this, note that there is an action of the group algebra $\mathbb{C}[K_0,K_0^{-1}]$ on it: for  $i \in \{ 1,\cdots,M+N-1 \}, s \in \mathbb{Z}_{\neq 0}, n \in \mathbb{Z}$
\begin{align}
K_0 K_i^{\pm 1} K_0^{-1} = K_i^{\pm 1},\ K_0 h_{i,s} K_0^{-1} = h_{i,s},\ K_0 X_{i,n}^{\pm} K_0^{-1} = q^{\pm \delta_{i,1}} X_{i,n}^{\pm}  \label{rel: triangular1'}.
\end{align}
Let $U_q(\mathcal{L}'\mathfrak{sl}(M,N)) := U_q(\mathcal{L}\mathfrak{sl}(M,N)) \rtimes \mathbb{C}[K_0,K_0^{-1}]$.

One can see informally $U_q(\mathcal{L}'\mathfrak{sl}(M,N))$ as a deformation of the universal enveloping algebra of the Lie superalgebra $\mathcal{L}'\mathfrak{sl}(M,N)$
 where 
\begin{align*}
 \mathcal{L}'\mathfrak{sl}(M,N) = \mathcal{L}\mathfrak{sl}(M,N) \oplus \mathbb{C}(E_{11} \otimes 1) \subset \mathfrak{gl}(M,N) \otimes \mathbb{C}[t,t^{-1}]. 
\end{align*}
When $M = N$, the Lie superalgebra $\mathcal{L}'\mathfrak{sl}(M,N)$ is nothing but $(\mathfrak{sl}(M,N)^{(1)})^{\mathscr{H}}$ in Yamane's notation \cite[\S 1.5]{Yam2}.

\subsection{Triangular decomposition}   \label{sec: 3.2}
There is an injection of superalgebras $U_q(\mathcal{L}\mathfrak{sl}(M,N)) \hookrightarrow U_q(\mathcal{L}'\mathfrak{sl}(M,N))$ given by $x \mapsto x \rtimes 1$. Identify $U_q(\mathcal{L}\mathfrak{sl}(M,N))$ with a subalgebra of $U_q(\mathcal{L}'\mathfrak{sl}(M,N))$ from now on.
\begin{notation}
Let $U_q^{\pm}(\mathcal{L}\mathfrak{sl}(M,N))$ (resp.  $U_q^0(\mathcal{L}\mathfrak{sl}(M,N))$) be the subalgebra of $U_q(\mathcal{L}\mathfrak{sl}(M,N))$ generated by the $X_{i,n}^{\pm}$ (resp. the $K_i^{\pm 1}, h_{i,s}$) for all $i \in \{1,\cdots,M+N-1\}, n \in \mathbb{Z}, s \in \mathbb{Z}_{\neq 0}$. Let $U_q^0(\mathcal{L}'\mathfrak{sl}(M,N))$ be  the subalgebra of $U_q(\mathcal{L}'\mathfrak{sl}(M,N))$ generated by $U_q^0(\mathcal{L}\mathfrak{sl}(M,N))$ and $K_0^{\pm 1}$. These subalgebras are clearly $\super$-homogeneous.
\end{notation}
\begin{theorem}  \label{thm: triangular decomposition}
 We have the following triangular decomposition for $U_q(\mathcal{L}\mathfrak{sl}(M,N))$:
\begin{itemize}
\item[(a)] the multiplication  $m: U_q^-(\mathcal{L}\mathfrak{sl}(M,N)) \stensor U_q^0(\mathcal{L}\mathfrak{sl}(M,N)) \stensor U_q^+(\mathcal{L}\mathfrak{sl}(M,N)) \longrightarrow U_q(\mathcal{L}\mathfrak{sl}(M,N))$ is an isomorphism of vector superspaces;

\item[(b)] $U_q^{+}(\mathcal{L}\mathfrak{sl}(M,N))$ (resp. $U_q^-(\mathcal{L}\mathfrak{sl}(M,N))$) is isomorphic to the algebra with generators $X_{i,n}^+$ (resp. $X_{i,n}^-$) and Relations \eqref{rel: Drinfel1}-\eqref{rel: ocillation4} with $+$ (resp. Relations \eqref{rel: Drinfel1}-\eqref{rel: ocillation4} with $-$); 

\item[(c)] $U_q^0(\mathcal{L}\mathfrak{sl}(M,N))$ is an algebra of Laurent polynomials 
\begin{align}
U_q^0(\mathcal{L}\mathfrak{sl}(M,N)) \cong \mathbb{C}[h_{i,s}: s \in \mathbb{Z}_{\neq 0}, 1 \leq i \leq M+N-1][K_i,K_i^{-1}: 1 \leq i \leq M+N-1].  \label{iso: diagonal part}
\end{align}  
\end{itemize}
\end{theorem}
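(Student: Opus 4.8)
The plan is to treat surjectivity and injectivity of $m$ separately, the first being routine and the second carrying all the weight. Write $U^{-}, U^{0}, U^{+}$ for the three subalgebras of $U := \Ud$, and let $\widetilde{U}^{\pm}$ (resp. $\widetilde{U}^{0}$) denote the algebras presented abstractly by the generators and relations asserted in (b) (resp. the Laurent polynomial algebra of (c)); there are tautological surjections $\widetilde{U}^{\pm} \twoheadrightarrow U^{\pm}$ and $\widetilde{U}^{0} \twoheadrightarrow U^{0}$. It suffices to show that the composite $m \colon \widetilde{U}^{-} \stensor \widetilde{U}^{0} \stensor \widetilde{U}^{+} \to U$ is a linear isomorphism, since bijectivity of this map forces each structure map to be injective (pick a nonzero kernel element to produce a nonzero element of the tensor product killed by $m$), hence an isomorphism, which is exactly (b) and (c), while its restriction is statement (a). For surjectivity I would use only the Cartan and mixed relations \eqref{rel: triangular1}--\eqref{rel: triangular2}: these let one push every $X^{-}$ to the left, every $X^{+}$ to the right and every $K_i, h_{i,s}$ to the middle, the passage of an $X^{+}$ through an $X^{-}$ depositing an element of $U^{0}$, so that any monomial in the generators is a sum of normally ordered monomials in $U^{-}U^{0}U^{+}$.

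For injectivity I would construct, following \cite{Jantzen,He}, an explicit representation $\rho \colon U \to \End(A)$ on the vector superspace $A := \widetilde{U}^{-} \stensor \widetilde{U}^{0} \stensor \widetilde{U}^{+}$ in which the three families of generators act, respectively, by left multiplication on the left factor, by insertion into the middle factor, and by transport to the right factor. Concretely, $X_{i,n}^{-}$ acts by left multiplication on $\widetilde{U}^{-}$; each $K_i^{\pm 1}$ moves into the middle factor after rescaling the left factor by the grading automorphism it induces, and each $h_{i,s}$ moves into the middle factor while acting on the left factor through the super-derivation $X_{j,n}^{-} \mapsto -\tfrac{[s l_i c_{i,j}]_{q_i}}{s} X_{j,n+s}^{-}$; and $X_{i,m}^{+}$ is transported to the right factor, producing a $\phi$-term in the middle factor each time it crosses an $X^{-}$ (via \eqref{rel: triangular2}) and a shifted generator each time it crosses an $h_{i,s}$ (via \eqref{rel: triangular1}). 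A preliminary point is that these rescalings and super-derivations are well defined on the \emph{abstract} algebras $\widetilde{U}^{\pm}$: since relations \eqref{rel: Drinfel1}--\eqref{rel: ocillation4} are homogeneous, one checks directly that they are preserved.

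The hard part is verifying that $\rho$ is a homomorphism of superalgebras, i.e. that the operators just defined satisfy every defining relation \eqref{rel: Cartan}--\eqref{rel: ocillation4} of $U$ as identities on $A$. The relations internal to $U^{0}$ and the commutation of $U^{0}$ past $X^{\pm}$ are straightforward, but two families require genuine computation: the mixed relation \eqref{rel: triangular2}, whose validity on $A$ is precisely the assertion that the $\phi$-corrections produced when $X^{+}$ crosses $X^{-}$ assemble correctly, and the Serre-type relations \eqref{rel: Serre3}--\eqref{rel: ocillation4}, which hold inside $\widetilde{U}^{+}$ by construction but must be shown \emph{not} to be destroyed by the correction terms generated by crossings. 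Tracking the Koszul signs coming from the odd generators $X_{M,n}^{\pm}$ and from the super tensor product $\stensor$ throughout is the source of most of the bookkeeping, and I expect this to be the real obstacle; I would isolate the longest of these verifications in an appendix.

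Granting that $\rho$ is a homomorphism, injectivity is immediate. Setting $v_{0} = 1 \stensor 1 \stensor 1$, the defining formulas give $\rho(c)\, v_{0} = 1 \stensor 1 \stensor c$ for $c \in \widetilde{U}^{+}$ (there is no $X^{-}$ or $h$ to cross, so no corrections arise), then $\rho(b)\,(1 \stensor 1 \stensor c) = 1 \stensor b \stensor c$ for $b \in \widetilde{U}^{0}$, and finally $\rho(a)\,(1 \stensor b \stensor c) = a \stensor b \stensor c$ for $a \in \widetilde{U}^{-}$; hence $\rho\bigl(m(a \stensor b \stensor c)\bigr)\, v_{0} = a \stensor b \stensor c$, so $w \mapsto \rho(m(w))\, v_{0}$ is the identity of $A$ and $m$ is injective. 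Together with surjectivity this proves $m$ is a linear isomorphism, whence (a), and, as noted above, (b) and (c); in particular the $K_i^{\pm 1}, h_{i,s}$ are algebraically independent, so $U^{0}$ is genuinely the Laurent polynomial algebra of \eqref{iso: diagonal part}.
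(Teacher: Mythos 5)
Your strategy is sound and is, at bottom, the same Jantzen--Hernandez method that the paper follows, but organized differently: the paper never constructs a representation of $\Ud$ itself. Instead it introduces the intermediate algebra $\tilde V$ subject only to Relations \eqref{rel: Cartan}--\eqref{rel: triangular2}, takes its triangular decomposition with \emph{free} halves as known input (this is where the module-on-the-tensor-product construction is hidden, inside the cited results of \cite{Jantzen,He}), and then kills the remaining relations in stages --- degree two \eqref{rel: Drinfel1}--\eqref{rel: Drinfeld2}, then Serre \eqref{rel: Serre3}, then oscillation \eqref{rel: ocillation4} --- each time proving that the relation elements commute with the opposite-sign generators $X_{i,0}^{\mp}$ (Lemmas \ref{lem: technical}--\ref{lem: oscillation relations vs triangular decomposition}), so that the two-sided ideal they generate has the one-sided form $\tilde V^-I^-\tilde V^-\tilde V^0\tilde V^+ + \tilde V^-\tilde V^0\tilde V^+I^+\tilde V^+$ and the decomposition passes to the quotient. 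Your one-shot construction front-loads all of this into the definition of $\rho$; the computational content (crossing corrections cancel against relation elements) is identical, and your reduction of (a), (b), (c) to bijectivity of the single composite map is correct, as is the surjectivity argument.

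There are, however, two concrete soft spots. First, you verify well-definedness only for the rescaling and derivation operators, but the operator $\rho(X_{i,m}^+)$ is defined by recursion on words in the \emph{left} factor, so before any homomorphism property can even be discussed you must show that this recursion descends from the free superalgebra to $\widetilde U^{-}$, i.e.\ that the correction terms produced when $X^+$ crosses a \emph{minus} relation element of \eqref{rel: Drinfel1}--\eqref{rel: ocillation4} vanish in $A$. This is the mirror image of the verification you list for the plus relations (and is exactly the ``Furthermore'' halves of the paper's lemmas, asserting stability of the ideals), and it is provable; but as written, $\rho$ does not yet exist. Second, your triage of which relations need genuine computation omits the degree-two same-sign relations \eqref{rel: Drinfel1}--\eqref{rel: Drinfeld2}: as operator identities on $A$ these require the same crossing analysis, and they are precisely what the paper treats first (Lemma \ref{lem: technical}, imported from Hernandez's technical lemmas for $U_q(\widehat{\mathfrak{sl}_3})$). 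Relatedly, the genuinely new difficulty in the super case is not the Koszul-sign bookkeeping you anticipate (only $\alpha_M$ is odd, so the signs are mild) but the degree-four oscillation relation \eqref{rel: ocillation4} in the case $M=N=2$, which the paper isolates in Appendix A; that calculation is what your appendix would have to contain.
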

As an immediate consequence, we obtain also a triangular decomposition for $U_q(\mathcal{L}'\mathfrak{sl}(M,N))$. 
\begin{cor} \label{cor: triangular enlargement}
The multiplication below 
\begin{displaymath}
m: U_q^-(\mathcal{L}\mathfrak{sl}(M,N)) \stensor U_q^0(\mathcal{L}'\mathfrak{sl}(M,N)) \stensor U_q^+(\mathcal{L}\mathfrak{sl}(M,N)) \longrightarrow U_q(\mathcal{L}'\mathfrak{sl}(M,N)),\ \ a\stensor b \stensor c \mapsto abc 
\end{displaymath} 
 is an isomorphism of vector superspaces.  $U_q^0(\mathcal{L}'\mathfrak{sl}(M,N))$ is an algebra of Laurent polynomials 
\begin{align}
U_q^0(\mathcal{L}'\mathfrak{sl}(M,N)) \cong \mathbb{C}[h_{i,s}: s \in \mathbb{Z}_{\neq 0}, 1 \leq i \leq M+N-1][K_i, K_i^{- 1}: 0 \leq i \leq M+N-1]. 
\end{align}
\end{cor}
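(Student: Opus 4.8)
The plan is to deduce the corollary directly from Theorem \ref{thm: triangular decomposition} by exploiting the semidirect product structure $U_q(\mathcal{L}'\mathfrak{sl}(M,N)) = \Ud \rtimes \mathbb{C}[K_0,K_0^{-1}]$. Recall that as a vector superspace the smash product is $\Ud \otimes \mathbb{C}[K_0,K_0^{-1}]$, with $K_0$ even and free over $\Ud$; by construction of the enlargement the action \eqref{rel: triangular1'} is given by an even algebra automorphism $\sigma := \mathrm{Ad}(K_0)$ that fixes each $K_i^{\pm 1}, h_{i,s}$ and rescales $X_{i,n}^{\pm} \mapsto q^{\pm \delta_{i,1}} X_{i,n}^{\pm}$. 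The two facts I will use repeatedly are: (i) $\sigma$ fixes $U_q^0(\mathcal{L}\mathfrak{sl}(M,N))$ pointwise, and (ii) $\sigma$ preserves each of $U_q^{\pm}(\mathcal{L}\mathfrak{sl}(M,N))$, since it merely rescales every generator $X_{i,n}^{\pm}$ by a scalar, hence acts as a diagonalisable linear automorphism on each $U_q^{\pm}(\mathcal{L}\mathfrak{sl}(M,N))$.

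First I would identify $U_q^0(\mathcal{L}'\mathfrak{sl}(M,N))$. By (i), $K_0^{\pm 1}$ commutes with the generators $K_i^{\pm 1}, h_{i,s}$ of $U_q^0(\mathcal{L}\mathfrak{sl}(M,N))$, so the subalgebra they generate is commutative and spanned by $\{b K_0^k : b \in U_q^0(\mathcal{L}\mathfrak{sl}(M,N)),\ k \in \mathbb{Z}\}$. Since $K_0$ is free over $\Ud$ in the smash product, these elements are linearly independent, whence $U_q^0(\mathcal{L}'\mathfrak{sl}(M,N)) \cong U_q^0(\mathcal{L}\mathfrak{sl}(M,N)) \otimes \mathbb{C}[K_0,K_0^{-1}]$. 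Combined with the Laurent-polynomial description in part (c) of the theorem, this yields the asserted presentation of $U_q^0(\mathcal{L}'\mathfrak{sl}(M,N))$ as a Laurent polynomial algebra with the single extra variable $K_0$.

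For the triangular decomposition I would analyse the multiplication map $m$ explicitly. A general element of $U_q^0(\mathcal{L}'\mathfrak{sl}(M,N))$ is a finite sum $\sum_k b_k K_0^k$ with $b_k \in U_q^0(\mathcal{L}\mathfrak{sl}(M,N))$, so it suffices to track $m(a \stensor b K_0^k \stensor c) = a b K_0^k c = a b\, \sigma^k(c)\, K_0^k$ for $a \in U_q^-, b \in U_q^0(\mathcal{L}\mathfrak{sl}(M,N)), c \in U_q^+$. By (ii) we have $\sigma^k(c) \in U_q^+(\mathcal{L}\mathfrak{sl}(M,N))$ and $a b\, \sigma^k(c) \in \Ud$. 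Under the vector space identification $U_q(\mathcal{L}'\mathfrak{sl}(M,N)) \cong \Ud \otimes \mathbb{C}[K_0,K_0^{-1}]$, the map $m$ thus factors as
\begin{align*}
U_q^- \stensor U_q^0(\mathcal{L}'\mathfrak{sl}(M,N)) \stensor U_q^+ &\cong \bigoplus_{k \in \mathbb{Z}} U_q^- \stensor U_q^0(\mathcal{L}\mathfrak{sl}(M,N)) \stensor U_q^+ \\
&\xrightarrow{\ \bigoplus_k (\mathrm{id} \stensor \mathrm{id} \stensor \sigma^k)\ } \bigoplus_{k \in \mathbb{Z}} U_q^- \stensor U_q^0(\mathcal{L}\mathfrak{sl}(M,N)) \stensor U_q^+ \\
&\xrightarrow{\ \bigoplus_k m_{\Ud}\ } \Ud \otimes \mathbb{C}[K_0,K_0^{-1}],
\end{align*}
where $m_{\Ud}$ denotes the isomorphism of Theorem \ref{thm: triangular decomposition}(a), applied in each $K_0$-degree. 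Each arrow is an isomorphism of vector superspaces: the first uses the decomposition of $U_q^0(\mathcal{L}'\mathfrak{sl}(M,N))$ established above (and that $K_0$ is even, so no Koszul signs intervene); the second is a direct sum of the linear automorphisms $\sigma^k|_{U_q^+}$; and the third is Theorem \ref{thm: triangular decomposition}(a). Hence $m$ is an isomorphism.

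The argument is essentially bookkeeping once Theorem \ref{thm: triangular decomposition} is available, so there is no serious obstacle. The only points requiring care are verifying that $\sigma$ is a well-defined algebra automorphism compatible with the defining relations \eqref{rel: Cartan}--\eqref{rel: ocillation4} and that $K_0$ is genuinely free over $\Ud$ in the smash product; both are guaranteed by the very construction of $U_q(\mathcal{L}'\mathfrak{sl}(M,N)) = \Ud \rtimes \mathbb{C}[K_0,K_0^{-1}]$. The potential subtlety of super signs is harmless, since $K_0$ is even and so conjugation by $K_0$ and the reordering of the factor $\mathbb{C}[K_0,K_0^{-1}]$ past $U_q^+$ introduce no sign corrections.
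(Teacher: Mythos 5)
Your proposal is correct, and it fills in exactly the bookkeeping that the paper has in mind: the paper offers no proof at all, declaring the corollary "an immediate consequence" of Theorem \ref{thm: triangular decomposition}, and your argument via the smash-product decomposition $U_q(\mathcal{L}'\mathfrak{sl}(M,N)) \cong \Ud \otimes \mathbb{C}[K_0,K_0^{-1}]$ and the automorphism $\mathrm{Ad}(K_0)$ preserving $U_q^{\pm}$ and fixing $U_q^0$ is precisely the intended reduction to part (a) of that theorem, carried out degree by degree in $K_0$.
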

Another consequence is  the existence of (anti-)isomorphisms of superalgebras.
\begin{cor} \label{cor: canonical isomorphisms}
 (1) There is an isomorphism of superalgebras $\tau_1: U_q^+(\mathcal{L}\mathfrak{sl}(M,N)) \longrightarrow U_q^-(\mathcal{L}\mathfrak{sl}(M,N))$ defined by $\tau_1(X_{i,n}^+) = X_{i,-n}^-$ for all $n \in \mathbb{Z}$ and $1 \leq i \leq M+N-1$.

(2) There is an anti-automorphism of superalgebras $\tau_2: U_q^+(\mathcal{L}\mathfrak{sl}(M,N)) \longrightarrow U_q^+(\mathcal{L}\mathfrak{sl}(M,N))$ defined by $\tau_2(X_{i,n}^+) = X_{i,-n}^+$ for all $n \in \mathbb{Z}$ and $1 \leq i \leq M+N-1$. 
\end{cor}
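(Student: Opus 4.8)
The plan is to define both maps on the Drinfel'd generators and to test well-definedness directly against the presentations supplied by Theorem~\ref{thm: triangular decomposition}(b): $U_q^+(\mathcal{L}\mathfrak{sl}(M,N))$ (resp. $U_q^-(\mathcal{L}\mathfrak{sl}(M,N))$) is the superalgebra on the $X_{i,n}^+$ (resp. $X_{i,n}^-$) subject to the $+$ (resp. $-$) version of Relations \eqref{rel: Drinfel1}--\eqref{rel: ocillation4}. Hence it suffices to check that each defining relation of the source is mapped into the relation ideal of the target; since the assignments send odd generators $X_{M,n}^{\pm}$ to odd elements, the $\super$-grading is preserved and the maps are morphisms of superalgebras. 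For $\tau_1$ I will additionally exhibit $X_{i,n}^-\mapsto X_{i,-n}^+$ as a two-sided inverse on generators, and for $\tau_2$ I will note that it squares to the identity on generators, hence is an involutive anti-automorphism, in particular bijective.

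For $\tau_1$ the device is the simultaneous flip $+\mapsto-$, $n\mapsto-n$; as a grading-preserving homomorphism it sends $[a,b]_u$ to $[\tau_1(a),\tau_1(b)]_u$ with the same parameter $u$. Relations \eqref{rel: Drinfel1}, \eqref{rel: Serre3} and \eqref{rel: ocillation4} are then immediate, because the $q$-parameters occurring in their (iterated) brackets are fixed constants, independent of the superscript $\pm$, so the image of a $+$ relation is literally the correspondingly reindexed $-$ relation. The only relation whose coefficients depend on the sign is \eqref{rel: Drinfeld2}, where $q^{+c_{i,j}}$ has to be matched against $q^{-c_{i,j}}$. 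I will substitute $m\mapsto-m-1$, $n\mapsto-n-1$ into the $-$ version of \eqref{rel: Drinfeld2} and verify by inspection that the four resulting monomials, with coefficients, agree term-by-term with the image under $\tau_1$ of the $+$ version; the index shift interchanges the two sides of the identity and thereby reconciles $q^{c_{i,j}}$ with $q^{-c_{i,j}}$.

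For $\tau_2$ I will first record, for a grading-preserving anti-homomorphism, the identity $\tau_2([a,b]_u)=-u\,[\tau_2(a),\tau_2(b)]_{u^{-1}}$ on homogeneous $a,b$, an immediate consequence of $\tau_2(xy)=(-1)^{|x||y|}\tau_2(y)\tau_2(x)$. Relation \eqref{rel: Drinfel1} is then immediate, and \eqref{rel: Drinfeld2} is handled exactly as for $\tau_1$ via the reindexing $m\mapsto-m-1$, $n\mapsto-n-1$, the anti-homomorphism reversing each quadratic monomial while a common super-sign factors out of every term. The substantive cases are the quantum-Serre-type relations \eqref{rel: Serre3} and \eqref{rel: ocillation4}: applying the bracket identity at each nesting, the scalar prefactors $-u$ cancel in pairs, and the image of a summand becomes the same nested bracket with the inner and outer $q$-parameters exchanged (so $[\,,]_q$ inside, $[\,,]_{q^{-1}}$ outside). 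I will then verify, by expanding both forms, the palindromic fact that the expression symmetrised over $m\leftrightarrow n$ is invariant under swapping $q$ and $q^{-1}$ in the two brackets; this identifies the image of \eqref{rel: Serre3} with a reindexed copy of \eqref{rel: Serre3}, and the same computation symmetrised over $n\leftrightarrow u$ settles \eqref{rel: ocillation4}.

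I expect the main obstacle to be exactly this last bookkeeping: pushing the super-signs through the iterated brackets of \eqref{rel: ocillation4}, where two odd generators $X_{M,\cdot}^{\pm}$ appear, and confirming that the $q\leftrightarrow q^{-1}$ symmetry of the symmetrised Serre expression survives these signs. Everything else reduces to the two elementary relabelling tricks above, after which the inverse of $\tau_1$ and the involutivity of $\tau_2$ are formal.
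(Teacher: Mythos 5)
Your proposal takes exactly the paper's route: the paper's entire proof of Corollary \ref{cor: canonical isomorphisms} is the remark that, in view of the presentations in Theorem \ref{thm: triangular decomposition}(b), it suffices to check that $\tau_1,\tau_2$ respect Relations \eqref{rel: Drinfel1}--\eqref{rel: ocillation4}; your ideal-membership criterion, the reindexing $m\mapsto -m-1$, $n\mapsto -n-1$ for \eqref{rel: Drinfeld2}, and the identity $\tau_2([a,b]_u)=-u[\tau_2(a),\tau_2(b)]_{u^{-1}}$ are a correct elaboration of that one-line argument.

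One step, however, is claimed in a form that is literally false, though it is repairable inside your own framework. For the quartic relation \eqref{rel: ocillation4} the palindromic $q\leftrightarrow q^{-1}$ symmetry is \emph{not} an identity in the free superalgebra, unlike the cubic case \eqref{rel: Serre3}, where the expansion $abc+bac-(u+v)(acb+bca)+uv(cba+cab)$ is visibly symmetric in $u,v$. Write $a=X^+_{M-1,m}$, $b=X^+_{M,n}$, $c=X^+_{M+1,k}$, $d=X^+_{M,u}$ (so $b,d$ are odd and the outermost bracket is $xy+yx$), and set
\[
T_{u,v}:=[[[a,b]_u,c]_v,d]+[[[a,d]_u,c]_v,b].
\]
A direct expansion gives
\[
T_{u,v}-T_{v,u}=(u-v)\bigl(cabd+dcab+cadb+bcad-bacd-dbac-dacb-bdac\bigr),
\]
which is a nonzero element of the free superalgebra. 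It vanishes only modulo the degree-two relations \eqref{rel: Drinfel1}, namely $ac\equiv ca$ (as $c_{M-1,M+1}=0$) and $bd\equiv -db$ (as $c_{M,M}=0$): indeed $bacd\equiv bcad$ and $dacb\equiv dcab$, while $dbac+bdac\equiv (db+bd)ac\equiv 0$ and $cabd+cadb=ca(bd+db)\equiv 0$. Consequently the $\tau_2$-image of \eqref{rel: ocillation4} is a reindexed copy of \eqref{rel: ocillation4} only up to an element of the ideal generated by \eqref{rel: Drinfel1}. Since well-definedness requires only membership in the relation ideal of the target, the proof goes through, but this extra reduction must be performed explicitly; the Serre-type symmetry computation alone does not settle \eqref{rel: ocillation4}.
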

\begin{proof}
In view of   Theorem \ref{thm: triangular decomposition} about presentations of algebras, it is enough to prove that $\tau_1,\tau_2$ respect Relations \eqref{rel: Drinfel1}-\eqref{rel: ocillation4}.
\end{proof}
\begin{remark}  \label{rmk: two types of triangular decomposition}
(1) The triangular decomposition will be used to construct the Verma modules and to argue that the Weyl modules are non-zero. See \S \ref{sec: 4.2}.

(2) There are two types of triangular decomposition: one is in terms of Chevalley generators, the other Drinfel'd currents. For the Chevalley type, the triangular decomposition for  quantum Kac-Moody algebras was proved in \cite[Theorem 4.21]{Jantzen}. For the Drinfel'd type, Hernandez proved the triangular decomposition for  general quantum affinizations \cite[Theorem 3.2]{He}. Their ideas of proof are essentially the same, which we shall follow below.

(3) For  $\mathfrak{g}$ a simple finite-dimensional Lie algebra, as demonstrated by Gross\'{e} \cite[Proposition 8]{double}, one can realize the quantum affine algebra $U_q(\widehat{\mathfrak{g}})$ as a quantum double by introducing topological coproducts on the Borel subalgebras with respect to Drinfel'd currents.  In this way, the Drinfel'd type triangular decomposition follows automatically and a topological Hopf algebra structure is deduced on $U_q(\widehat{\mathfrak{g}})$. We believe that analogous results hold for $\Ud$. In particular, $\Ud$ could be endowed with a topological Hopf superalgebra structure (with coproduct being Drinfel'd new coproduct).
\end{remark}

 We proceed to proving Theorem \ref{thm: triangular decomposition}.
Let $\tilde{V}$ be the superalgebra defined by: generators $X_{i,n}^{\pm 1}, h_{i,s}, K_i^{\pm 1}$ ($1 \leq i \leq M+N-1, n \in \mathbb{Z}, s \in \BZ_{\neq 0}$); $\super$-grading $|X_{M,n}^{\pm}| = \odd$ and $\even$ otherwise; Relations \eqref{rel: Cartan}-\eqref{rel: triangular2}. Define its three subalgebras $\tilde{V}^{\pm}$ and $\tilde{V}^0$ analogously. Then $(\tilde{V}^-, \tilde{V}^0, \tilde{V}^+)$ forms a triangular decomposition for $\tilde{V}$. Moreover,  $\tilde{V}^{+}$ (resp. $\tilde{V}^-$) is freely generated by the $X_{i,n}^+$ (resp. the $X_{i,n}^-$), and  $\tilde{V}^0$ is the RHS of $\eqref{iso: diagonal part}$.

For $1 \leq i \leq j \leq M+N-1$, let $I_{i,j}^{\pm}$ be the vector subspace of $\tilde{V}^{\pm}$ generated by the vectors $[X_{i,m}^{\pm}, X_{j,n}^{\pm}]$ if $c_{i,j} = 0$ and $X_{i,m+1}^{\pm} X_{j,n}^{\pm} - q^{\pm c_{i,j}} X_{j,n}^{\pm}X_{i,m+1}^{\pm} - q^{\pm c_{i,j}} X_{i,m}^{\pm}X_{j,n+1}^{\pm} + X_{j,n+1}^{\pm} X_{i,m}^{\pm}$ if $c_{i,j} \neq 0$ for all $m,n \in \BZ$.
Let $I^+$ (resp. $I^-$) be the sum of the $I_{i,j}^+$ (resp. the $I_{i,j}^-$).
\begin{lem}  \label{lem: technical}
Let $1 \leq i, k, u \leq M+N-1$ with $k \leq u$. Then $[I_{k,u}^{\pm}, X_{i,0}^{\mp}] = 0$ in $\tilde{V}$. Furthermore, the vector subspaces $\tilde{V}^-\tilde{V}^0\tilde{V}^+I^+\tilde{V}^+$ and $\tilde{V}^-I^-\tilde{V}^-\tilde{V}^0\tilde{V}^+$ are two-sided ideals of $\tilde{V}$.
\end{lem}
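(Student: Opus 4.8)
The plan is to prove the commutation identity first and then bootstrap it into the ideal property. I treat the $+$ case throughout; the $-$ case is entirely analogous (with the roles of the two signs interchanged). Fix a spanning element $w \in I_{k,u}^+$. When $c_{k,u} \neq 0$ this is
\begin{displaymath}
w = X_{k,m+1}^+ X_{u,n}^+ - q^{c_{k,u}} X_{u,n}^+ X_{k,m+1}^+ - q^{c_{k,u}} X_{k,m}^+ X_{u,n+1}^+ + X_{u,n+1}^+ X_{k,m}^+,
\end{displaymath}
and when $c_{k,u} = 0$ it is $[X_{k,m}^+, X_{u,n}^+]$. To compute $[w, X_{i,0}^-]$ I would expand each monomial by the graded Leibniz rule $[ab,c] = a[b,c] + (-1)^{|b||c|}[a,c]b$ and invoke Relation \eqref{rel: triangular2}: only $i \in \{k,u\}$ contributes, and every surviving summand has the shape (a single $X^\pm$) times $\tfrac{\phi_{\bullet}^{\pm}}{q_\bullet - q_\bullet^{-1}}$. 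Re-ordering these into a normal form by means of the $\phi$--$X$ commutation relations that follow from \eqref{rel: triangular1}, one checks that the coefficients $q^{\pm c_{k,u}}$ appearing in $w$ are exactly those forcing the four contributions to cancel. This verification is the heart of the matter and the main obstacle: it is the only step where the precise numerical shape of the Drinfel'd relations \eqref{rel: Drinfel1}--\eqref{rel: Drinfeld2} enters, and I would relegate the bookkeeping to an appendix.

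Next I would record two stability properties of $I_{k,u}^+$ under the Cartan part, both immediate from \eqref{rel: triangular1}. Conjugation by $K_i$ scales every monomial of $w$ by the same weight factor $q^{c_{i,k}+c_{i,u}}$, whence $K_i I_{k,u}^+ K_i^{-1} = I_{k,u}^+$; and $\mathrm{ad}(h_{i,s})$, being an even derivation that merely shifts mode indices, sends $w$ to $\tfrac{[s l_i c_{i,k}]_{q_i}}{s}$ times its shift in the first slot plus $\tfrac{[s l_i c_{i,u}]_{q_i}}{s}$ times its shift in the second slot, again inside $I_{k,u}^+$. Thus $I^+$ is stable under $\mathrm{Ad}(K_i)$ and $\mathrm{ad}(h_{i,s})$. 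Combining this with the zero-mode identity and the graded Jacobi identity upgrades the statement to all modes: choosing a neighbour $j$ with $c_{j,i} \neq 0$ (possible for every $i$, using $j = i$ when $i \neq M$), the relation $[h_{j,s}, X_{i,0}^-] = -\tfrac{[s l_j c_{j,i}]_{q_j}}{s} X_{i,s}^-$ with nonzero coefficient (as $q$ is not a root of unity) lets one write $X_{i,s}^-$ as a scalar multiple of $[h_{j,s}, X_{i,0}^-]$, and then for $w \in I^+$,
\begin{displaymath}
[w,[h_{j,s},X_{i,0}^-]] = [[w,h_{j,s}],X_{i,0}^-] + [h_{j,s},[w,X_{i,0}^-]] = 0,
\end{displaymath}
the first summand vanishing since $[w,h_{j,s}] \in I^+$ again commutes with $X_{i,0}^-$, the second by the base case. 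Hence $[I^+, X_{i,n}^-] = 0$ for all $n$, and symmetrically $[I^-, X_{i,n}^+] = 0$.

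Finally I would deduce the ideal property from the triangular decomposition of $\tilde V$ established above, the normalisation of $\tilde V^{\pm}$ by $\tilde V^0$, and the absorption rule $\tilde V^+ X_{i,n}^- \subseteq X_{i,n}^- \tilde V^+ + \tilde V^0 \tilde V^+$, which follows by induction on degree from \eqref{rel: triangular2} (each passage of a lowering generator through a raising one produces only a Cartan term). Writing $J := \tilde V^- \tilde V^0 \tilde V^+ I^+ \tilde V^+$, closure under left multiplication by any generator and under right multiplication by $X_{i,n}^+, h_{i,s}, K_i^{\pm 1}$ is formal bookkeeping with these rules together with the $\mathrm{Ad}$- and $\mathrm{ad}$-stability of $I^+$. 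The one delicate case is right multiplication by $X_{i,n}^-$: one slides $X_{i,n}^-$ leftward through the rightmost $\tilde V^+$ (picking up $\tilde V^0$ terms that are absorbed) and then through the factor $I^+$, and the crucial point is precisely the super-commutativity $[I^+, X_{i,n}^-] = 0$ from the previous paragraph, which lets the factor $I^+$ pass intact so that the result remains in $J$. The twin subspace $\tilde V^- I^- \tilde V^- \tilde V^0 \tilde V^+$ is treated in mirror fashion, the delicate case now being left multiplication by $X_{i,n}^+$ and resting on $[I^-, X_{i,n}^+] = 0$.
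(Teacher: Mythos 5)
Your proposal follows essentially the same route as the paper: reduce to $i \in \{k,u\}$ via Relation \eqref{rel: triangular2}, expand the commutator and check that the four resulting $\phi\cdot X^{+}$ terms cancel using the $\phi$--$X^{+}$ commutation coming from Relation \eqref{rel: triangular1}, then combine the $\mathrm{ad}(h_{j,s})$- and $\mathrm{Ad}(K_j)$-stability of $I^{\pm}$ with Relation \eqref{rel: triangular2} to get the ideal property. The cancellation you defer to an appendix is precisely the step the paper outsources to the technical lemmas of Hernandez (\cite[Theorem 3.2 and \S 3.3.1]{He}), after observing that one is in the same situation as $U_{q_k}(\widehat{\mathfrak{sl}_3})$; your claim that the coefficients $q^{\pm c_{k,u}}$ in the degree-2 relation are exactly those forcing the cancellation is correct, so this is a deferral matching the paper's citation rather than an error. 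One small slip: in your mode-upgrade step, a neighbour $j$ with $c_{j,i}\neq 0$ does not exist when $M=N=1$ (the only index is $i=M=1$ and $c_{1,1}=0$), so the Jacobi-identity argument fails there; but in that case $K_1$ and the $h_{1,s}$ commute with all $X_{1,n}^{\pm}$, so the base computation itself already gives $[I^{\pm},X_{1,n}^{\mp}]=0$ for every mode $n$ --- indeed, since Relation \eqref{rel: triangular2} holds in all modes, the Leibniz computation never needed the restriction to $n=0$ in the first place, which renders the upgrade step optional everywhere.
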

\begin{proof}
We argue that this follows essentially from \cite[Theorem 3.2]{He}. If $i \notin \{k,u\}$, then it is clear that $[I_{k,u}^{\pm}, X_{i,0}^{\mp}] = 0$ from Relation \eqref{rel: triangular2}. Without loss of generality, suppose $i=k$.

If $c_{k,u} = 0$ and $k \neq u$, then $[[X_{k,m}^+, X_{u,n}^+], X_{k,0}^-] = [\frac{\phi_{k,m}^+ - \phi_{k,m}^-}{q_k - q_{k}^{-1}}, X_{u,n}^+]$. Writing $\phi_{k,m}^{\pm}$ as a product of $K_k^{\pm 1}, h_{k,s}$ and using the relations $K_k X_{u,n}^+ = X_{u,n}^+ K_k, h_{k,s} X_{u,n}^+ = X_{u,n}^+ h_{k,s}$, we see that  $[\frac{\phi_{k,m}^+ - \phi_{k,m}^-}{q_k - q_{k}^{-1}}, X_{u,n}^+] = 0$. This says that $[I_{k,u}^+, X_{k,0}^-] = 0$. Similarly, $[I_{k,u}^-, X_{k,0}^+] = 0$. 

If $c_{k,u} = 0$ and $k = u$, then $k = u = M$. We have 
\[[[X_{M,m}^+, X_{M,n}^+], X_{M,0}^-] = [X_{M,m}^+, \frac{\phi_{M,n}^+ - \phi_{M,n}^-}{q-q^{-1}} ] - [\frac{\phi_{M,m}^+ - \phi_{M,m}^-}{q-q^{-1}}, X_{M,n}^+].\]
 Again the relations $K_M X_{M,n}^+ = X_{M,n}^+ K_M, h_{M,s}X_{M,n}^+ = X_{M,n}^+ h_{M,s}$ imply that 
 $[I_{M,M}^+,X_{M,0}^-] = 0$. Similarly, $[I_{M,M}^-, X_{M,0}^+] = 0$.

If $c_{k,u} \neq 0$ and $k \neq u$, then $c_{k,u} = \pm 1$ and $[[X_{k,m}^+,X_{u,n}^+],X_{k,0}^-] = [\frac{\phi_{k,m}^+-\phi_{k,m}^-}{q_k -q_k^{-1}}, X_{u,n}^+] \in \tilde{V}$. We want to write this vector as a product of the form $\tilde{V}^-\tilde{V}^0\tilde{V}^+$ by using \textbf{only} the following relations
\begin{align*}
& K_k X_{u,n}^+ K_k^{-1} = q^{c_{k,u}} X_{u,n}^+,  \\
& [h_{k,s}, X_{u,n}^+] = \frac{[s l_k c_{k,u}]_{q_k}}{s} X_{u,n+s}^+, \\
& K_k^{\pm 1} \exp(\pm(q_k - q_k^{-1})\sum_{s \in \BZ_{>0}} h_{k,\pm s} z^{\pm s} ) = \sum_{n \in \BZ} \phi_{k,n}^{\pm} z^n. 
\end{align*} 
We are in the same situation as $U_{q_k}(\widehat{\mathfrak{sl}_3})$, when showing that the Drinfel'd relations of degree 2 respect the triangular decomposition. It follows from Theorem 3.2 and the technical lemmas in \S 3.3.1 of  \cite{He} that $[\frac{\phi_{k,m}^+-\phi_{k,m}^-}{q_k -q_k^{-1}}, X_{u,n}^+] = 0$. As a result, $[I_{k,u}^{\pm}, X_{k,0}^{\mp}] = 0$.

Similar considerations lead to $[I_{k,u}^{\pm}, X_{k,0}^{\mp}] = 0$ when $c_{k,u} \neq 0$ and $k=u$.

For the second part, note that the $I_{i,j}^{\pm}$ are stable by the $[h_{u,s},]$. Relation \eqref{rel: triangular2} applies.
\end{proof}

This means that the Drinfel'd relations of degree 2 respect the triangular decomposition.   Let $V$ be the quotient of $\tilde{V}$ by the two-sided ideal generated by the $I^+ + I^-$. Then 
\begin{displaymath}
V = \frac{\tilde{V}}{\tilde{V}^-I^-\tilde{V}^-\tilde{V}^0\tilde{V}^+ +\tilde{V}^-\tilde{V}^0\tilde{V}^+I^+\tilde{V}^+ } \cong \frac{\tilde{V}^-}{\tilde{V}^-I^-\tilde{V}^-} \stensor \tilde{V}^0 \stensor \frac{\tilde{V}^+}{\tilde{V}^+I^+\tilde{V}^+}
\end{displaymath}
where the isomorphism is induced by the triangular decomposition for $\tilde{V}$. Let $\pi_1: \tilde{V} \longrightarrow V$ be the canonical projection. By abuse of notation, we identify $X_{i,n}^{\pm}, K_i^{\pm 1}$ and $h_{i,s}$  with $\pi_1(X_{i,n}^{\pm}), \pi_1(K_i^{\pm 1})$, and $\pi_1(h_{i,s})$ respectively. Let $V^{\pm} = \pi_1(\tilde{V}^{\pm})$ and $V^0 = \pi_1(\tilde{V}^0)$. The above identifications say that $(V^-,V^0,V^+)$ forms a triangular decomposition for $V$. Moreover, the projection $\pi_1$ induces isomorphisms
\begin{displaymath}
V^{+} \cong \frac{\tilde{V}^+}{\tilde{V}^+ I^+ \tilde{V}^+},\ V^- \cong \frac{\tilde{V}^-}{\tilde{V}^-I^-\tilde{V}^-},\ V^0 \cong \tilde{V}^0.  
\end{displaymath}  

When $c_{i,j} = \pm 1$ and $i \neq M$, let  $J_{i,j}^{\pm}$ be subspace of $V^{\pm}$ generated by the LHS of  Relation \eqref{rel: Serre3} with $\pm$ for all $m,n,k \in \BZ$. Let $J^+$ (resp. $J^-$) be the sum of the $J_{i,j}^+$ (resp. the $J_{i,j}^-$). Using Theorem 3.2 and the technical lemmas in \S 3.3.1 of \cite{He} we deduce that (the same argument as Lemma \ref{lem: technical} above)
\begin{lem}
For all $1 \leq i,j,k \leq M+N-1$ such that $c_{i,j} = \pm 1$ and $i \neq M$, we have $[J_{i,j}^{\pm}, X_{k,0}^{\mp}] = 0$ in $V$. Therefore, the vector subspaces  $V^-V^0V^+J^+V^+$ and $V^-J^-V^-V^0V^+$ are two-sided ideals of $V$.
\end{lem}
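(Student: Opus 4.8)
The plan is to mimic the proof of Lemma~\ref{lem: technical} verbatim, replacing the degree-$2$ generators $I_{i,j}^{\pm}$ by the degree-$3$ Serre generators $J_{i,j}^{\pm}$ of Relation~\eqref{rel: Serre3} and invoking the corresponding computation of Hernandez for the degree-$3$ relations. Fix a spanning element
\[
S_{m,n,k}^{+} = [X_{i,m}^{+}, [X_{i,n}^{+}, X_{j,k}^{+}]_{q^{-1}}]_q + [X_{i,n}^{+}, [X_{i,m}^{+}, X_{j,k}^{+}]_{q^{-1}}]_q \in J_{i,j}^{+}
\]
(and symmetrically for $J_{i,j}^{-}$), where $c_{i,j} = \pm 1$ and $i \neq M$; in particular $i \neq j$. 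Since $[X_{l,0}^{-}, -]$ is a super-derivation of $V$, I would expand $[S_{m,n,k}^{+}, X_{l,0}^{-}]$ by the super-Jacobi identity, distributing $\mathrm{ad}(X_{l,0}^{-})$ across the two nested brackets. By Relation~\eqref{rel: triangular2}, $[X_{a,p}^{+}, X_{l,0}^{-}] = 0$ whenever $a \neq l$, so the only surviving contributions come from the factors carrying the index $l$. This leaves the three cases $l \notin \{i,j\}$ (where the bracket vanishes termwise), $l = i$, and $l = j$.

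In the two nontrivial cases I would substitute, for the matching factor, $[X_{l,p}^{+}, X_{l,0}^{-}] = (\phi_{l,p}^{+} - \phi_{l,p}^{-})/(q_l - q_l^{-1})$ and then use Relation~\eqref{rel: triangular1} to commute the resulting $\phi_{l,p}^{\pm}$ (equivalently the $K_l^{\pm1}, h_{l,s}$ appearing in its generating series) past the remaining positive generators, picking up $q$-powers and shifting the spectral indices. The outcome is an element of $V^{0}V^{+}$ whose vanishing is a purely local statement: it involves only the rank-$2$ data $(c_{i,i}, c_{i,j})$ attached to the nodes $\{i,j\}$, together with the degree-$2$ relations $I^{\pm}$, which already hold in $V$.

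This is precisely the verification that the degree-$3$ Serre relations of a quantum affinization are compatible with the triangular decomposition, carried out in \cite[Theorem~3.2 and \S3.3.1]{He}; since $i \neq M$ the doubled node is even, so the local subalgebra generated by the nodes $\{i,j\}$ is governed by the same relations as $U_{q_i}(\mathcal{L}\mathfrak{sl}_3)$, and the super-signs --- which enter only through the possibly-odd neighbour $X_{j}^{\pm}$ when $j = M$ --- are tracked without altering the cancellation. Hence $[S_{m,n,k}^{\pm}, X_{l,0}^{\mp}] = 0$ in $V$, giving $[J_{i,j}^{\pm}, X_{k,0}^{\mp}] = 0$. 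The main obstacle is exactly this sign bookkeeping when $j = M$: one must check that the anticommutators produced by $\mathrm{ad}(X_{M,0}^{-})$ reorganise into the same expression that vanishes in the bosonic $U_{q_i}(\mathcal{L}\mathfrak{sl}_3)$ computation, rather than producing a genuinely new term.

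For the ideal statement, note first that $J^{+}$ and $J^{-}$ are stable under $\mathrm{ad}(h_{u,s})$ and under conjugation by $K_u^{\pm1}$: by Relation~\eqref{rel: triangular1} these operators act on the spanning Serre elements by scaling and by shifting their spectral parameters, preserving the spanning set. Combining this stability with the commutation $[J^{\pm}, X_{k,0}^{\mp}] = 0$ just established, with the triangular decomposition $(V^-, V^0, V^+)$ of $V$, and with $V^{\pm}V^0 = V^0 V^{\pm}$ and $[V^{\pm}, X_{k,0}^{\mp}] \subseteq V^0 V^{\pm}$, one checks that right (resp. left) multiplication of $V^-V^0V^+J^+V^+$ and of $V^-J^-V^-V^0V^+$ by each generator $X_{k,0}^{\pm}, h_{u,s}, K_u^{\pm1}$ stays inside the same subspace: positive (resp. negative) generators are absorbed directly, Cartan generators are moved through using the stability above, and the opposite Chevalley generators $X_{k,0}^{\mp}$ are moved through $J^{\pm}$ freely (by the vanishing bracket, up to sign) and through the surrounding $V^{\pm}$ at the cost of $V^0 V^{\pm}$ terms that reabsorb into the same form. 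Therefore both subspaces are two-sided ideals of $V$.
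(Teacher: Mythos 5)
Your proposal is correct and takes essentially the same route as the paper: the paper's proof consists precisely of repeating the argument of Lemma \ref{lem: technical}, reducing $[J_{i,j}^{\pm}, X_{k,0}^{\mp}]$ to the rank-two computation for quantum affinizations in \cite[Theorem 3.2 and \S 3.3.1]{He} (legitimate because $i \neq M$ makes the doubled node even), and deducing the ideal statement from stability of $J^{\pm}$ under $[h_{u,s},\cdot]$ together with Relation \eqref{rel: triangular2}. Your explicit attention to the super-sign bookkeeping when $j = M$ is exactly the point the paper leaves implicit, and it is handled correctly since the odd generator never gets doubled.
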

In other words, the Serre relations of degree 3 respect the triangular decomposition. Suppose now  $M,N > 1$. Let $O^+$ (resp. $O^-$) be the subspace of $V^+$ (resp. $V^-$) generated by the LHS of Relation \eqref{rel: ocillation4} with $+$ (resp. with $-$) for all $m,n,k,u \in \mathbb{Z}$.

\begin{lem}  \label{lem: oscillation relations vs triangular decomposition}
In the superalgebra $V$, $[O^{\pm}, X_{i,0}^{\mp}] = 0$ for all $1 \leq i \leq M+N-1$. Therefore, $V^-O^-V^-V^0V^+$ and $V^-V^0V^+O^+V^+$ are two-sided ideals of $V$.
\end{lem}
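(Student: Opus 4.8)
The plan is to isolate the single commutation identity $[O^{\pm}, X_{i,0}^{\mp}] = 0$ (for all $i$) and then deduce the ideal statement exactly as in Lemma \ref{lem: technical} and the preceding lemma. For the deduction, I first note that $O^+$ is stable under every $\mathrm{ad}(h_{u,s})$: by Relation \eqref{rel: triangular1}, $[h_{u,s}, -]$ merely shifts the modes of the $X^+$-factors of a generator of $O^+$ by $s$, so it maps $O^+$ into itself. Since $X_{i,s}^-$ is a nonzero multiple of $[h_{j,s}, X_{i,0}^-]$ for a suitable $j$ (take $j=i$ using $c_{i,i} = 2l_i \neq 0$ when $i \neq M$, and $j = M-1$ using $c_{M-1,M} = -1 \neq 0$ when $i = M$; here $M-1 \geq 1$ because $M,N > 1$), the Jacobi identity together with the $\mathrm{ad}(h_{u,s})$-stability promotes $[O^+, X_{i,0}^-] = 0$ to $[O^+, X_{i,n}^-] = 0$ for all $n$. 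Combined with the triangular decomposition of $V$, which lets $O^+ \subseteq V^+$ be commuted across $V^-$ and $V^0$, this shows $V^- V^0 V^+ O^+ V^+$ is a two-sided ideal; the $O^-$ case is entirely parallel. So the whole content reduces to $[O^+, X_{i,0}^-] = 0$, on which I focus. Write $\Omega(m,n,k,u) \in O^+$ for the left-hand side of Relation \eqref{rel: ocillation4} taken with the $+$ sign.

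I treat $\mathrm{ad}(X_{i,0}^-) = [X_{i,0}^-, -]$ as a (super-)derivation of $V$. Because $X_{i,0}^-$ super-commutes with every $X_{j,p}^+$ with $j \neq i$ by Relation \eqref{rel: triangular2}, the derivation acts on $\Omega(m,n,k,u)$ by replacing each occurrence of an $X_{i,p}^+$-factor with $[X_{i,0}^-, X_{i,p}^+] = \frac{\phi_{i,p}^+ - \phi_{i,p}^-}{q_i - q_i^{-1}}$, carrying the appropriate graded sign. In particular, if $i \notin \{M-1, M, M+1\}$ no factor is struck and $[\Omega(m,n,k,u), X_{i,0}^-] = 0$ at once, which disposes of all but three values of $i$.

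For $i \in \{M-1, M+1\}$ the derivation is even and strikes a single factor, namely $X_{M-1,m}^+$ or $X_{M+1,k}^+$; the resulting Cartan current is then commuted back through the remaining $X^+$-factors using Relation \eqref{rel: triangular1}, which contributes only $q$-powers and mode shifts (the relevant couplings being $c_{M-1,M} = -1$, $c_{M,M+1} = 1$, and $c_{M-1,M+1} = 0$), and one checks that the expression vanishes in $V$ by the same type of computation as in the preceding lemmas. The case $i = M$ is the real obstacle. Now $X_{M,0}^-$ is odd, so $\mathrm{ad}(X_{M,0}^-)$ is an odd derivation obeying the graded Leibniz rule, and it strikes \emph{both} odd factors $X_{M,n}^+$ and $X_{M,u}^+$ in each of the two summands of $\Omega$, each being replaced by $\frac{\phi_{M,\bullet}^+ - \phi_{M,\bullet}^-}{q - q^{-1}}$. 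These currents must then be commuted past $X_{M-1,m}^+$ and $X_{M+1,k}^+$ --- which they genuinely shift, since $c_{M-1,M}, c_{M,M+1} \neq 0$ --- and the outcome is simplified using the degree-two Drinfel'd relations \eqref{rel: Drinfel1}-\eqref{rel: Drinfeld2}, which already hold in $V$ (in particular $X_{M,m}^+$ and $X_{M,n}^+$ anticommute and square to zero).

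The crux is that the four terms produced, two from each summand of $\Omega$, cancel in pairs precisely because of the $n \leftrightarrow u$ symmetrisation built into Relation \eqref{rel: ocillation4}: the anomaly from striking $X_{M,n}^+$ in the first summand is matched by the one from striking $X_{M,u}^+$ in the second, and conversely. Carrying out this cancellation requires careful tracking of the super-signs and of the shifted mode indices, and this bookkeeping is the main difficulty; I would relegate the explicit computation to an appendix rather than the main text. Once $[O^+, X_{i,0}^-] = 0$ is established for $i = M$, the lemma follows by the reduction of the first paragraph.
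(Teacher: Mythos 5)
Your reductions are correct and coincide with the paper's: the passage from $[O^{\pm},X_{i,0}^{\mp}]=0$ to the two-sided ideal statement via stability of $O^{\pm}$ under the $[h_{u,s},\cdot]$ is exactly the mechanism of Lemma \ref{lem: technical}, and Relation \eqref{rel: triangular2} indeed kills $[O^{\pm},X_{i,0}^{\mp}]$ at once for $i\notin\{M-1,M,M+1\}$. The problem is that everything after this point --- which is the entire mathematical content of the lemma --- is asserted rather than proved. For $i=M\pm 1$ you say the vanishing follows "by the same type of computation as in the preceding lemmas," but those lemmas dispose of the degree-$2$ and degree-$3$ relations by quoting Hernandez's results for (non-super) quantum affinizations; the degree-$4$ oscillation relation \eqref{rel: ocillation4} has no non-super counterpart to quote, so the vanishing must actually be computed. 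This is precisely what the paper does in Appendix A (after first reducing to $M=N=2$): the case $i=M-1$ there is already a full page of generating-series manipulation, not a citation.

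For $i=M$ your proposed mechanism is, moreover, not what makes the identity true. After $\mathrm{ad}(X_{M,0}^-)$ strikes $X_{M,n}^+$ and $X_{M,u}^+$, the four resulting terms do not cancel in pairs under the $n\leftrightarrow u$ symmetrisation: each produced current $\phi_M^{\pm}$ must be pushed through $X_{M-1,m}^+$ and $X_{M+1,k}^+$ via Relation \eqref{rel: triangular1}, which generates infinitely many terms with shifted modes. In the paper's computation one packages everything into series, factors out the currents $K_2h_2(z)$, $K_2h_2(w)$, and is left with coefficient families ($\lambda(n,b,c)$ and $\mu(a,c,n,d)$ in Appendix A) whose vanishing is established not by a sign-bookkeeping pairing but by proving, with repeated use of the quadratic Relations \eqref{rel: Drinfel1}--\eqref{rel: Drinfeld2}, the recursions $\lambda(n,b,c)=\lambda(n-1,b,c+1)$ and $\mu(a,c,n,d)=\mu(a,c,n-1,d+1)$, and then checking the base cases $\lambda(0,b,c)=\mu(a,c,0,d)=0$. (The symmetrisation is essential --- contributions from both summands of \eqref{rel: ocillation4} enter each coefficient --- but the cancellation happens inside this induction, interleaved with the quadratic relations, not between four terms.) Since you explicitly defer the computation and your sketch of how it would go does not match what actually closes it, the proposal establishes only the easy reductions and leaves the lemma itself unproved.
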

\noindent {\it Sketch of proof.} When $i \notin \{M-1, M, M+1\}$, this is clear from Relation \eqref{rel: triangular2}. We are reduced to the case $M = N = 2$.  The related calculations are carried out in  Appendix A. \hfill $\Box$

By definition the superalgebra $\Ud$ is the quotient of $V$ by the two-sided ideal $N$ generated by $J^+ + J^- + O^+ + O^-$. Now from the two lemmas above we get 
\begin{displaymath}
N = V^-(J^-+O^-)V^-V^0V^+ + V^-V^0V^+(J^++O^+)V^+,
\end{displaymath}
from which Theorem \ref{thm: triangular decomposition} follows.
  
\subsection{Linear generators of PBW type}
We shall find a system of linear generators for the vector superspace $U_q^+(\mathcal{L}\mathfrak{sl}(M,N))$.  In view of Corollary \ref{cor: canonical isomorphisms}, this will produce one for $U_q^-(\mathcal{L}\mathfrak{sl}(M,N))$.

\begin{notation} \label{notation: weight}
(1) For simplicity, in this section, let $U_{M,N} := U_q^+(\mathcal{L}\mathfrak{sl}(M,N))$.

(2) Let $A'_{M,N}$ be the subalgebra of $U_q(\mathcal{L}'\mathfrak{sl}(M,N))$ generated by $K_i^{\pm 1}$ with $0 \leq i \leq M+N-1$. Then $A'_{M,N} = \mathbb{C}[K_i,K_i^{-1}: 0 \leq i \leq M+N-1]$ is an algebra of Laurent polynomials (Corollary \ref{cor: triangular enlargement}). Let $P_{M,N}$ be the set of algebra homomorphisms $A'_{M,N} \longrightarrow \mathbb{C}$. Then $P_{M,N}$ has an abelian group structure: for $\alpha, \beta \in P_{M,N}$
\begin{displaymath}
(\alpha + \beta)(K_i^{\pm 1}) = \alpha(K_i^{\pm 1}) \beta(K_i^{\pm 1}),\ \ 0(K_i^{\pm 1}) = 1,\ \ (-\alpha)(K_i^{\pm 1}) = \alpha(K_i^{\mp 1}).
\end{displaymath}
From  Relations \eqref{rel: triangular1} and \eqref{rel: triangular1'}, we see that $U_{M,N} = \bigoplus\limits_{\alpha \in P_{M,N}} (U_{M,N})_{\alpha}$ where
\begin{displaymath}
(U_{M,N})_{\alpha} = \{ x \in U_{M,N}\ |\ K_i x K_i^{-1} = \alpha(K_i) x\ \mathrm{for}\ 0 \leq i \leq M+N-1  \}.
\end{displaymath}
Moreover, $(U_{M,N})_{\alpha} (U_{M,N})_{\beta} = (U_{M,N})_{\alpha + \beta}$. Let $\weight(U_{M,N}) := \{\alpha \in P_{M,N}\ |\ (U_{M,N})_{\alpha} \neq 0 \}$.

(3) For $1 \leq i \leq M+N-1$, define $\alpha_i \in P_{M,N}$ by: $\alpha_i(K_j) = \begin{cases} q^{\delta_{i,1}} & \mathrm{if}\ j = 0, \\
q^{c_{i,j}} & \mathrm{if}\ j > 0. \end{cases}$\ These $\alpha_i$ are $\mathbb{Z}$-linearly independent in $P_{M,N}$. Let $Q_{M,N} := \bigoplus\limits_{i=1}^{M+N-1}\mathbb{Z}\alpha_i$ and $Q_{M,N}^+ := \bigoplus\limits_{i=1}^{M+N-1}\mathbb{Z}_{\geq 0}\alpha_i$. We have $X_{i,n}^+ \in (U_{M,N})_{\alpha_i}$ and $\weight(U_{M,N}) \subseteq Q_{M,N}^+$. (It is for the reason of linear independence among $\alpha_i$ that we introduce $U_q(\mathcal{L}'\mathfrak{sl}(M,N))$.)

(4) Set $\Delta_{M,N} := \{ \alpha_i + \alpha_{i+1} + \cdots + \alpha_j \in Q_{M,N}^+\ |\ 1 \leq i \leq j \leq M+N-1 \}$ with the following total ordering: $\alpha_i + \cdots + \alpha_j \leq \alpha_{i'}+ \cdots + \alpha_j$ if $i < i'$ or $i=i', j \leq j'$. 
\end{notation}

Following \cite[Definition 3.9]{Rosso}, we can now define the root vectors
\begin{definition}  \label{def: positive root vectors}
For $\beta = \alpha_i + \cdots + \alpha_j \in \Delta_{M,N}$ and $n \in \mathbb{Z}$, define $X_{\beta}(n) \in (U_{M,N})_{\beta}$ by 
\begin{align}
X_{\beta}(n) := [\cdots [[X_{i,n}^+, X_{i+1,0}^+]_{q_{i+1}}, X_{i+2,0}^+]_{q_{i+2}}, \cdots, X_{j,0}^+]_{q_j}
\end{align}
with the convention that $X_{\alpha_i}(n) = X_{i,n}^+ = X_i(n)$.
\end{definition}
Similar to the  quantum affine algebra $U_{r,s}(\widehat{\mathfrak{sl}_n})$ \cite[Theorem 3.11]{Rosso}, we have
\begin{theorem}  \label{thm: pbw}
The vector space $U_{M,N}$ is spanned by vectors of the form $\prod\limits_{\beta \in \Delta_{M,N}}^{\rightarrow} (\prod\limits_{i=1}^{c_{\beta}} X_{\beta}(n_{i,\beta}) )$ where $c_{\beta} \in \mathbb{Z}_{\geq 0}$ for $\beta \in \Delta_{M,N}$ and $n_{i,\beta} \in \mathbb{Z}$ for $1 \leq i \leq c_{\beta}$.
\end{theorem}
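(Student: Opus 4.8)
The plan is to reorganise an arbitrary product of the generators $X_{i,n}^+ = X_{\alpha_i}(n)$ into the prescribed ordered form by a straightening procedure, governed by a double induction: an outer induction on $(M,N)$ (say on $M+N$) and an inner induction on the height $\mathrm{ht}(\alpha) = \sum_i c_i$ of the weight $\alpha = \sum_i c_i \alpha_i \in Q_{M,N}^+$. First I would invoke the weight grading $U_{M,N} = \bigoplus_{\alpha} (U_{M,N})_\alpha$ of Notation \ref{notation: weight} to reduce the assertion to a single homogeneous component. Since $U_{M,N}$ is generated by the $X_{i,n}^+$, each $(U_{M,N})_\alpha$ is spanned by products $X_{i_1,n_1}^+ \cdots X_{i_h,n_h}^+$ with $\alpha_{i_1} + \cdots + \alpha_{i_h} = \alpha$ and $h = \mathrm{ht}(\alpha)$, so it suffices to rewrite every such product as a linear combination of ordered monomials in the root vectors of total weight $\alpha$.

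For the outer induction I would exploit the rank-reduction built into the Cartan data. The subalgebra generated by $\{X_{i,n}^+ : 2 \le i \le M+N-1\}$ is a homomorphic image of $U_q^+(\mathcal{L}\mathfrak{sl}(M-1,N))$ (or of a pure type-$A$ factor when $M=1$), by the presentation of Theorem \ref{thm: triangular decomposition}(b), and it contains exactly the root vectors $X_\beta(n)$ with $\beta = \alpha_i + \cdots + \alpha_j$ starting at a node $i \ge 2$. By the inductive hypothesis those straighten into ordered monomials, whose images span the subalgebra. Since the ordering on $\Delta_{M,N}$ places every root starting at node $1$ strictly below every root starting at a node $\ge 2$, the outstanding task is to move all contributions of $X_{1,n}^+$ to the far left, assembling them into the root vectors $X_{\alpha_1 + \cdots + \alpha_j}(n)$ and then ordering these leading factors among themselves.

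The heart of the matter is therefore a collection of Levendorskii--Soibelman-type straightening relations: for root vectors with $\gamma > \beta$ I want an identity of the shape
\begin{align*}
X_\gamma(m)\, X_\beta(n) = (\text{scalar})\, X_\beta(n)\, X_\gamma(m) + \sum (\text{monomials in } X_\delta(\cdot),\ \beta < \delta < \gamma),
\end{align*}
whose correction terms use only root vectors strictly between $\beta$ and $\gamma$ in the ordering. Granting such relations, a monomial is straightened by repeatedly removing an inversion: the leading term has one fewer inversion, and the corrections are strictly lower in a well-founded refinement of the root ordering, so the reduction terminates by the inner induction. The relations themselves I would extract from the defining Relations \eqref{rel: Drinfel1}--\eqref{rel: ocillation4}: Relation \eqref{rel: Drinfel1} gives outright (super)commutation when $c_{i,j} = 0$; the definition of $X_\beta(n)$ as an iterated $q$-bracket produces the intermediate root vectors; and Relation \eqref{rel: Drinfeld2} is used to trade the loop index between the two factors so that the bracketed expressions collapse onto the normalised root vectors $X_\delta(\cdot)$, which carry the loop parameter only on their leftmost letter.

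I expect the main obstacle to be precisely this last bookkeeping: keeping the loop-grading index under control through Relation \eqref{rel: Drinfeld2} while simultaneously collapsing nested $q$-brackets into single root vectors, and arranging that the corrections truly decrease in a well-founded measure despite index shifts. This is compounded by the behaviour at the isotropic odd node $\alpha_M$, where the quantum Serre relation \eqref{rel: Serre3} is unavailable (the case $i=M$ is excluded) and must be replaced by the oscillator relation \eqref{rel: ocillation4}; consequently the straightening identities straddling nodes $M-1, M, M+1$ demand separate, more delicate computation, presumably the calculations deferred to the appendix. Isolating a minimal set of bracket identities among the low-height root vectors and bootstrapping the general case from them is where the genuine combinatorial work lies.
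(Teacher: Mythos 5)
Your scaffolding coincides with the paper's: reduction to weight spaces, the double induction (outer on $(M,N)$, inner on the height of the weight), rank reduction to the subalgebra generated by the nodes $\geq 2$ (identified with $U_{M-1,N}$ via Theorem \ref{thm: triangular decomposition}), and the special treatment of the roots $\alpha_1+\cdots+\alpha_j$. The gap is in the engine you propose for the key step. The Levendorskii--Soibelman-type straightening relations are never derived, and the termination argument you attach to them fails in the loop setting. Concretely, for adjacent $i<j$, Relation \eqref{rel: Drinfeld2} rearranges to
\begin{displaymath}
X_{j,n}^+X_{i,m+1}^+ \;=\; q^{-c_{i,j}}X_{i,m+1}^+X_{j,n}^+ \;-\; X_{i,m}^+X_{j,n+1}^+ \;+\; q^{-c_{i,j}}X_{j,n+1}^+X_{i,m}^+,
\end{displaymath}
so removing the inversion $X_j(\cdot)X_i(\cdot)$ reproduces, modulo correctly ordered monomials, the \emph{same} inversion with shifted loop indices: $X_{j,n}^+X_{i,m+1}^+\equiv q^{-c_{i,j}}X_{j,n+1}^+X_{i,m}^+$, and iterating this never terminates. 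Since the multiset of roots and the inversion count are unchanged and the loop indices can shift indefinitely in $\BZ$, no well-founded refinement of the root ordering makes these corrections ``strictly lower''; any valid argument must handle all loop indices of a fixed root pattern simultaneously. That is precisely what Proposition \ref{prop: root system} (quoted from Hu--Rosso--Zhang, and covering single-root weights only) accomplishes, and it is why the paper never attempts inversion-by-inversion straightening.

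What the paper does instead is prove spanning directly through containments recycled from the induction hypotheses: $X_1(U_{M,N})_{\beta-\alpha_1}\subseteq (U_{M,N}')_{\beta}$, then $(U_{M,N})_{\beta-\alpha_i}X_i\subseteq(U_{M,N}')_{\beta}$ for $i\geq 2$ (where both rank reductions $U_{M-1,N}$ and $U_{M,N-1}$ enter), then $X_{\alpha_1+\cdots+\alpha_s}(U_{M,N})_{\beta-\alpha_1-\cdots-\alpha_s}\subseteq(U_{M,N}')_{\beta}$, followed by a three-case analysis; no normal-form identities are ever established. Moreover, the one indispensable non-inductive commutation input, $[X_{\alpha_1+\cdots+\alpha_{M+N-1}}(n),X_i(0)]_{\lambda_i}=0$ (Lemma \ref{lem: commutation relations}), is not computed from the Drinfel'd relations at all: it is pulled back from Yamane's finite-type identity (Lemma \ref{lem: Yamane commutation relations}) along the homomorphism $U_q(\mathfrak{gl}(M,N))\to U_q(\mathcal{L}'\mathfrak{sl}(M,N))$, $t_1\mapsto K_0$, $e_1^{\pm}\mapsto X_{1,\pm n}^{\pm}$, $e_i^{\pm}\mapsto X_{i,0}^{\pm}$. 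Your plan has no substitute for this, and at the odd node, where \eqref{rel: Serre3} is unavailable, what is actually needed is the containment \eqref{rel: serre and oscillation} resting on the degree-4 relation \eqref{rel: ocillation4}; you correctly flag this as delicate but leave it entirely open. In short: the induction scheme is right, but the mechanism you offer in place of the paper's Claims 1--3 and Step 3 is exactly the part that does not work as described.
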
  
\begin{remark}  \label{rmk: parity}
(1) The above generators are called of Poincar\'{e}-Birkhoff-Witt type because on specialisation $q = 1$ they degenerate to PBW generators for universal enveloping algebra of Lie superalgebras \cite[Theorem 6.1.1]{PBW}. This PBW theorem will be used to argue that the set of weights of a Weyl module is always finite.

(2) We believe that the vectors in Theorem \ref{thm: pbw} with the following conditions form a basis of $U_{M,N}$:  for $1 \leq i < j \leq c_{\beta}$, $n_{i,\beta} \leq n_{j,\beta}$ if $p(\beta) = \even$ and $n_{i,\beta} < n_{j,\beta}$ if $p(\beta) = \odd$. Here $p \in \hom_{\BZ}(Q_{M,N}, \super)$ is the parity map given by: $p(\alpha_i) = \begin{cases}
\odd & \mathrm{if}\ i = M, \\
\even & \mathrm{otherwise}.
\end{cases}$  Indeed, in the paper \cite{Rosso}, the PBW basis Theorem 3.11 was obtained for the two-parameter quantum affine algebra $U_{r,s}(\widehat{\mathfrak{sl}}_n)$, with the linear independence among the PBW generators following from a general argument of Lyndon words \cite{Rosso2}. Hu-Rosso-Zhang called this PBW basis the {\it quantum affine Lyndon basis}. 

(3) For $\mathfrak{g}$ a simple finite-dimensional Lie algebra, Beck has found a convex PBW-type basis for the quantum affine algebra $U_q(\widehat{\mathfrak{g}})$ in terms of Chevalley generators (see \cite[Proposition 6.1]{Beck1} and \cite[Proposition 3]{Beck2}). When $\mathfrak{g} = \mathfrak{sl}_2$, the Drinfel'd type Borel subalgebra of $U_q(\mathcal{L}\mathfrak{sl}_2)$ can be realized as the Hall algebra of the category of coherent sheaves on the projective line $\mathbb{P}^1(\mathbb{F}_q)$. In this way, the Drinfel'd type PBW basis follows easily (\cite[Proposition 25]{Hall}).
\end{remark}     
\begin{lem}  \label{lem: commutation relations}
For $2 \leq i \leq M+N-1$ and $n \in \mathbb{Z}$, we have $[X_{\alpha_1+\cdots+\alpha_{M+N-1}}(n), X_i(0)]_{\lambda_i} = 0$ where $\lambda_i = q^{-(\epsilon_1-\epsilon_{M+N},\epsilon_i - \epsilon_{i+1})}$ (see Equation \eqref{equ: bilinear form canonnical} for the definition of the involved bilinear form).
\end{lem}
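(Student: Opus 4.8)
The plan is to deduce this loop identity from its finite-type counterpart, Lemma \ref{lem: Yamane commutation relations}, by transporting that relation along an algebra map that puts the spectral parameter $n$ on the first node and leaves every other node at degree $0$. Concretely, I would build a homomorphism of superalgebras
\[ \iota_n \colon U_q^+(\mathfrak{gl}(M,N)) \longrightarrow U_{M,N}, \qquad e_1^+ \mapsto X_{1,n}^+, \quad e_j^+ \mapsto X_{j,0}^+ \ \ (2 \leq j \leq M+N-1), \]
where $U_q^+(\mathfrak{gl}(M,N))$ is the subalgebra generated by the $e_j^+$ of \S\ref{sec: 2.2}, presented by the quantum Serre relations among them \cite{Yam1}. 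Note first that $\iota_n$ preserves $\super$-degree, since $|e_M^+| = |X_{M,0}^+| = \odd$ and $|e_1^+| = |X_{1,n}^+|$ agree; hence $\iota_n$ sends each $q$-bracket $[a,b]_u$ to $[\iota_n(a),\iota_n(b)]_u$.

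To see $\iota_n$ is well defined I must check that every Serre-type defining relation is respected, and here the key observation is that each such relation becomes, under the prescribed assignment, the specialisation of a genuine relation of $U_{M,N}$ in which all spectral parameters attached to node $1$ are set to $n$ and all others to $0$. The commutation relations $[X_{j,0}^+,X_{k,0}^+]=0$ for $c_{j,k}=0$ (in particular $(X_{M,0}^+)^2=0$) come from \eqref{rel: Drinfel1}; the cubic Serre relations come from \eqref{rel: Serre3}, whose two-parameter symmetrised form collapses, upon setting the two node-$j$ degrees equal, to the single relation needed; and the higher Serre relation around the isotropic node $M$ comes likewise from \eqref{rel: ocillation4}. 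Granting well-definedness, because $\iota_n$ is an algebra homomorphism intertwining the bracket convention it carries the finite root vector to the affine one,
\[ \iota_n(e^+) = [\cdots[[X_{1,n}^+, X_{2,0}^+]_{q_2}, X_{3,0}^+]_{q_3}, \cdots, X_{M+N-1,0}^+]_{q_{M+N-1}} = X_{\alpha_1+\cdots+\alpha_{M+N-1}}(n), \]
while $\iota_n(e_i^+) = X_{i,0}^+ = X_i(0)$ for $2 \leq i \leq M+N-1$. Applying $\iota_n$ to Lemma \ref{lem: Yamane commutation relations} and using $q^{(\epsilon_1-\epsilon_{M+N},\,\epsilon_{i+1}-\epsilon_i)} = q^{-(\epsilon_1-\epsilon_{M+N},\,\epsilon_i-\epsilon_{i+1})} = \lambda_i$ then yields exactly $[X_{\alpha_1+\cdots+\alpha_{M+N-1}}(n), X_i(0)]_{\lambda_i} = 0$.

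The main obstacle is precisely the well-definedness of $\iota_n$, i.e. the bookkeeping that every finite Serre relation is the image of a valid loop relation under this degree specialisation, with the care concentrated at the odd node $M$ and the boundary cases $M=1$ and $M=2$: when node $1$ is adjacent to or equal to $M$, the relation linking node $1$ to its neighbour is governed by \eqref{rel: ocillation4} rather than \eqref{rel: Serre3}, and one must match the correct higher-Serre form. The feature that makes all of this unobstructed is that in the only computation we ultimately need --- that of $[e^+, e_i^+]_{\lambda_i}$ --- the first node occurs in every relevant bracket with a single fixed spectral parameter $n$, so no relation demanding two \emph{distinct} node-$1$ degrees is ever invoked beyond the symmetrised \eqref{rel: Serre3} evaluated at equal arguments. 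This is exactly why Yamane's finite-type argument lifts verbatim once the spectral parameter is attached to $X_{1,n}^+$.
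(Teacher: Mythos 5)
Your proposal is correct and is essentially the paper's own argument: both proofs transport Lemma \ref{lem: Yamane commutation relations} along a superalgebra homomorphism determined by $e_1^+ \mapsto X_{1,n}^+$ and $e_i^+ \mapsto X_{i,0}^+$ ($2 \leq i \leq M+N-1$), under which $e^+$ is carried to $X_{\alpha_1+\cdots+\alpha_{M+N-1}}(n)$ and the exponent $q^{(\epsilon_1-\epsilon_{M+N},\epsilon_{i+1}-\epsilon_i)}$ becomes $\lambda_i$. The only difference is that the paper defines the map on all of $U_q(\mathfrak{gl}(M,N))$ (sending $t_1 \mapsto K_0$, $e_1^- \mapsto X_{1,-n}^-$, $e_i^- \mapsto X_{i,0}^-$) with target the enlarged algebra $U_q(\mathcal{L}'\mathfrak{sl}(M,N))$, so well-definedness is checked against the presentation quoted in \S \ref{sec: 2.2}, whereas your restriction to the positive parts trades those extra verifications for the additional input --- true, but not quoted in the paper --- that $U_q^+(\mathfrak{gl}(M,N))$ is presented by the Serre-type relations alone \cite{Yam1}.
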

\begin{proof}
Note that the association $t_1 \mapsto K_0, e_1^{\pm} \mapsto X_{1,\pm n}^{\pm}, e_i^{\pm} \mapsto X_{i,0}^{\pm}$ for $2 \leq i \leq M+N-1$ extends to a homomorphism of superalgebras $U_q(\mathfrak{gl}(M,N)) \longrightarrow U_q(\mathcal{L}'\mathfrak{sl}(M,N))$. Lemma \ref{lem: Yamane commutation relations} applies.
\end{proof}
Let $U_{M,N}'$ be the vector subspace of $U_{M,N}$ spanned by the vectors in Theorem \ref{thm: pbw}. As these vectors are all $Q_{M,N}$-homogeneous, $U_{M,N}'$ is $Q_{M,N}$-graded. Our aim is to prove that $U_{M,N} = U_{M,N}'$, or equivalently, $(U_{M,N})_{\beta} \subseteq (U_{M,N}')_{\beta}$ for all $\beta \in Q_{M,N}^+$. Remark that $U_{M,0} = U_{M,0}', U_{0,N} = U_{0,N}'$ and $U_{1,1} = U_{1,1}'$.

\begin{prop}  \label{prop: root system}
For $\beta \in \Delta_{M,N}$, $(U_{M,N})_{\beta} = (U_{M,N}')_{\beta}$.
\end{prop}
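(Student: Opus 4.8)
The plan is to exploit that a root $\beta=\alpha_i+\cdots+\alpha_j\in\Delta_{M,N}$ is \emph{multiplicity-free}: every simple root $\alpha_k$ with $i\le k\le j$ enters with coefficient exactly $1$. By the weight grading of Notation \ref{notation: weight}, the space $(U_{M,N})_\beta$ is therefore spanned by the monomials $X_{k_1,m_1}^+\cdots X_{k_r,m_r}^+$ in which $(k_1,\dots,k_r)$ ranges over the permutations of the interval $\{i,i+1,\dots,j\}$ (so $r=j-i+1$) and the $m_\ell$ range over $\BZ$. Since $(U_{M,N}')_\beta\subseteq(U_{M,N})_\beta$ is clear, the content is the reverse inclusion. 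First I would record that only the quadratic relations \eqref{rel: Drinfel1} and \eqref{rel: Drinfeld2} are relevant: the relations \eqref{rel: Serre3} and \eqref{rel: ocillation4} are $Q_{M,N}$-homogeneous of weights in which some $\alpha_k$ has coefficient $\ge 2$, so they cannot occur in the multiplicity-free weight space $(U_{M,N})_\beta$. Relation \eqref{rel: Drinfel1} says that generators with $|k-l|\ge 2$ commute (the super-sign is trivial, as at most one odd generator $X_{M,\ast}^+$ appears in weight $\beta$), so non-adjacent letters reorder freely and the whole problem reduces to controlling adjacent pairs $X_{k}^+,X_{k+1}^+$ through \eqref{rel: Drinfeld2}.

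I would then argue by induction on the length $j-i$ of $\beta$. The base case $i=j$ is immediate, since $(U_{M,N})_{\alpha_i}$ is spanned by the $X_{i,n}^+=X_{\alpha_i}(n)$, which are root vectors. For the inductive step the aim is to establish the inclusion
\[ (U_{M,N})_\beta \ \subseteq\ \sum_{k=i}^{j}\ \sum_{m\in\BZ} X_{\alpha_i+\cdots+\alpha_k}(m)\,(U_{M,N})_{\alpha_{k+1}+\cdots+\alpha_j}, \]
with the convention that the right-hand factor is $\BC$ when $k=j$. Granting this, the induction hypothesis applied to the strictly shorter interval $\alpha_{k+1}+\cdots+\alpha_j$ rewrites each right-hand factor as a combination of PBW monomials $w$; and since the initial-segment root $\gamma=\alpha_i+\cdots+\alpha_k$ starts at $i$ while every root vector occurring in $w$ starts at an index $\ge k+1$, the root $\gamma$ precedes all of them in the total order of Notation \ref{notation: weight}, so $X_\gamma(m)\,w$ is itself an ordered PBW monomial as in Theorem \ref{thm: pbw}. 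This closes the induction.

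To prove the displayed inclusion I would peel the factor $X_{i,\ast}^+$ toward the far left. Whenever it lies to the right of a letter $X_{k}^+$ with $k\ge i+2$, relation \eqref{rel: Drinfel1} transposes the two and advances it one slot. The delicate case is when $X_{i,\ast}^+$ meets $X_{i+1,\ast}^+$: here \eqref{rel: Drinfeld2} expresses the out-of-order product $X_{i+1,n}^+X_{i,m}^+$ as a combination of in-order products $X_{i,\ast}^+X_{i+1,\ast}^+$ (which continue the peeling and feed the $k=i$ term) \emph{plus} one further out-of-order product whose two spectral parameters are shifted by $\mp1$. This is the crux: the quadratic relation couples the \emph{ordering} of the generators to their \emph{spectral parameters}, so a naive reordering does not terminate. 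The resolution, already visible for $\beta=\alpha_i+\alpha_{i+1}$, is to iterate \eqref{rel: Drinfeld2} so as to drive the relevant spectral parameter of the surviving out-of-order factor to the canonical value $0$; after finitely many steps (each changing it by $1$) the residual pair $X_{i+1,0}^+X_{i,\ast}^+$ is absorbed into the length-two root vector $X_{\alpha_i+\alpha_{i+1}}(\ast)$ of Definition \ref{def: positive root vectors}, and proceeding outward builds the longer initial-segment root vectors $X_{\alpha_i+\cdots+\alpha_k}(\ast)$.

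The main obstacle is precisely this bookkeeping of the spectral shifts, together with the passage from partial to longer root vectors. One must organise the argument as a well-founded double induction: the outer one on the length of $\beta$, and an inner one measuring, for a fixed permutation type of the letters, how far the relevant spectral parameters lie from their canonical values, checking that each application of \eqref{rel: Drinfeld2} strictly decreases the inner quantity modulo already-ordered terms. Building up $X_{\alpha_i+\cdots+\alpha_k}$ from $X_{\alpha_i+\cdots+\alpha_{k-1}}$ and $X_{k}^+$ in turn requires auxiliary commutation identities between a partial root vector and a simple generator at shifted spectral parameters; these I expect to follow from repeated use of \eqref{rel: Drinfel1}–\eqref{rel: Drinfeld2} exactly as in the two-parameter case treated in \cite{Rosso}, and they are where the real computation lives. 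The super-structure, by contrast, causes no trouble here, since at most one odd generator occurs in $(U_{M,N})_\beta$, so all reorderings are sign-free apart from the fixed signs already encoded in the brackets defining the root vectors.
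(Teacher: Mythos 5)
Your proposal is correct and follows essentially the same route as the paper: the paper's entire proof is the remark that the statement "comes essentially from Proposition 3.10 of \cite{Rosso}, whose proof relied only on the Drinfel'd relations of degree 2," which is exactly the reduction your argument makes precise — only \eqref{rel: Drinfel1}--\eqref{rel: Drinfeld2} can act in a multiplicity-free weight space, and the remaining root-vector commutation computations are deferred to \cite{Rosso} in both cases. Your added observations (the weight-grading argument excluding the Serre and oscillation relations, and the triviality of super-signs since at most one odd generator occurs in weight $\beta$) are precisely the unstated reasons the non-super result of \cite{Rosso} transfers, so your write-up supplies detail the paper leaves implicit rather than diverging from it.
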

\begin{proof}
This comes essentially from Proposition 3.10 of \cite{Rosso}, whose proof relied only on the Drinfel'd relations of degree 2.
\end{proof}

\noindent \textbf{Proof of Theorem \ref{thm: pbw}.} This is divided into three steps.

{\it Step 1: \underline{ induction hypotheses}.}
We shall prove $U_{M,N} = U_{M,N}'$ by induction on $(M,N)$. This is true when $MN = 0$ or $M+N \leq 2$. Fix $M,N \in \mathbb{Z}_{>0}$ with $M+N \geq 3$. Suppose 

\noindent \textbf{Hypothesis A.}\ {\it If $M', N' \in \mathbb{Z}_{\geq 0}$ verify $M' \leq M, N' \leq N, M'+N' < M+N$, then $U_{M',N'} = U_{M',N'}'$. }

We want to show that $(U_{M,N})_{\gamma} = (U_{M,N}')_{\gamma}$ for all $\gamma \in Q_{M,N}^+$. Define the height function $h\in \hom_{\BZ}( Q_{M,N}, \BZ)$ by $h(\alpha_i) = 1$  for all $1 \leq i \leq M+N-1$. We proceed by induction on $h(\gamma)$. From Proposition \ref{prop: root system} and   Relations \eqref{rel: Drinfel1}-\eqref{rel: Drinfeld2}, it is clear that $(U_{M,N})_{\gamma} = (U_{M,N}')_{\gamma}$ when $h(\gamma) \leq 2$. Fix $k \in \mathbb{Z}_{>2}$. Suppose

\noindent \textbf{Hypothesis B.}\ {\it If $\gamma \in Q_{M,N}^+$ and $h(\gamma) < k$, then  $(U_{M,N})_{\gamma} = (U_{M,N}')_{\gamma}$. } 

Fix $\beta \in Q_{M,N}^+$ with $h(\beta) = k$. We need to ensure $(U_{M,N})_{\beta} \subseteq (U_{M,N}')_{\beta}$.

{\it Step 2: \underline{ consequences of these hypotheses}.} To simplify notations, let  $X_{\gamma} := \sum\limits_{n \in \BZ} \mathbb{C} X_{\gamma}(n)$ for $\gamma \in \Delta_{M,N}$ and $X_i := X_{\alpha_i}$ for $1 \leq i \leq M+N-1$.

\noindent \textbf{Claim 1.} We have $X_1 (U_{M,N})_{\beta - \alpha_1} \subseteq (U_{M,N}')_{\beta}$.
\begin{proof}
This is a direct application of Hypothesis B, as $h(\beta - \alpha_1) = k-1$ and $X_1 U_{M,N}' \subseteq U_{M,N}'$.
\end{proof}
\noindent \textbf{Claim 2.} For $2 \leq i \leq M+N-1$, we have $(U_{M,N})_{\beta - \alpha_i} X_i \subseteq (U_{M,N}')_{\beta}$.
\begin{proof}
According to Hypothesis B, it suffices to verify that $U_{M,N}' X_i \subseteq U_{M,N}'$ for $2 \leq i \leq M+N-1$. From the definition of $U_{M,N}'$, we have to ensure that
\begin{displaymath}
\prod_{\gamma \in \Delta_{M,N}^1}^{\rightarrow} (\prod_{j=1}^{c_{\gamma}} X_{\gamma}(n_{j,\gamma})) X_i \subseteq U_{M,N}'
\end{displaymath}
where $\Delta_{M,N}^1 = \{\alpha_s + \cdots + \alpha_t\ |\ 2 \leq s \leq t \leq M+N-1  \}$ is an ordered subset of $\Delta_{M,N}$. We are reduced to consider the subalgebra of $U_{M,N}$ generated by the $X_s(n)$ with $2 \leq s \leq M+N-1$ and $n \in \mathbb{Z}$, which is canonically isomorphic to $U_{M-1,N}$ (Theorem \ref{thm: triangular decomposition}). Hypothesis A applies. 
\end{proof}
\noindent \textbf{Claim 3.} For $2 \leq s \leq M+N-1$, we have $X_{\alpha_1+\cdots+\alpha_s} (U_{M,N})_{\beta - \alpha_1-\cdots-\alpha_s} \subseteq (U_{M,N}')_{\beta}$.
\begin{proof}
If $\beta - \alpha_1 - \cdots - \alpha_s \in Q_{M,N}\setminus Q_{M,N}^+$, then $(U_{M,N})_{\beta - \alpha_1 - \cdots - \alpha_s} = 0$ and the LHS is 0.  If $\beta = \alpha_1+\cdots+\alpha_s$, then Proposition \ref{prop: root system} applies. We suppose from now on that $\beta - \alpha_1-\cdots-\alpha_s \in Q_{M,N}^+\setminus \{0\}$. In view of the definition of $U_{M,N}'$ and Hypothesis B, it is enough to prove that: for $1 \leq t < s \leq M+N-1$
\begin{equation}  \label{equ: little case}
X_{\alpha_1+\cdots+\alpha_s} X_{\alpha_1+\cdots+\alpha_t} (U_{M,N}')_{\beta - (\alpha_1+\cdots+\alpha_s)-(\alpha_1+\cdots+\alpha_t)} \subseteq U_{M,N}'.
\end{equation}

\textbf{Case $s < M+N-1$.}  Suppose  $\beta - (\alpha_1+\cdots+\alpha_s)-(\alpha_1+\cdots+\alpha_t) \in Q_{M,N}^+$. As $t < s < M+N-1$, $X_{\alpha_1+\cdots+\alpha_s}$ and $X_{\alpha_1+\cdots+\alpha_t}$ are both in the subalgebra of $U_{M,N}$ generated by the $X_i(n)$ with $1 \leq i \leq M+N-2$ and $n \in \BZ$, which is isomorphic to $U_{M,N-1}$. From Hypothesis A, we get
\begin{align*}
& X_{\alpha_1+\cdots+\alpha_s} X_{\alpha_1+\cdots+\alpha_t} \subseteq \sum_{i=1}^{s-1} X_{\alpha_1+\cdots+\alpha_i} (U_{M,N}')_{(\alpha_1+\cdots+\alpha_t)+(\alpha_{i+1}+\cdots+\alpha_s)},\\
& X_{\alpha_1+\cdots+\alpha_s} X_{\alpha_1+\cdots+\alpha_t} (U_{M,N}')_{\beta - (\alpha_1+\cdots+\alpha_s)-(\alpha_1+\cdots+\alpha_t)} \subseteq \sum_{i=1}^{s-1} X_{\alpha_1+\cdots+\alpha_i} (U_{M,N}')_{\beta-(\alpha_1+\cdots+\alpha_i)}.
\end{align*}
By induction on $2 \leq s \leq M+N-2$ and Claim 1, we get \eqref{equ: little case}.

\textbf{Case $s = M+N-1$.} The idea is to write the LHS of \eqref{equ: little case} as a sum of the form $\sum\limits_{r=1}^{M+N-2} X_{\alpha_1+\cdots+\alpha_r} U_{M,N}$ in order to reduce to the first case.
If $\beta - (\alpha_1 + \cdots + \alpha_{M+N-1}) - (\alpha_1+\cdots+\alpha_t) \in Q_{M,N}^+\setminus \{0\}$, then we can apply Hypothesis B to $(\alpha_1+\cdots+\alpha_{M+N-1}) + (\alpha_1+\cdots+\alpha_t)$: 
\begin{align*}
& X_{\alpha_1+\cdots+\alpha_{M+N-1}} X_{\alpha_1+\cdots+\alpha_{t}} \subseteq \sum_{i=1}^{M+N-2} X_{\alpha_1+\cdots+\alpha_i} (U_{M,N}')_{(\alpha_1+\cdots+\alpha_t) + (\alpha_{i+1} + \cdots+\alpha_{M+N-1})}  \\
& X_{\alpha_1+\cdots+\alpha_{M+N-1}} X_{\alpha_1+\cdots+\alpha_{t}} (U_{M,N}')_{\beta - (\alpha_1+\cdots+\alpha_{M+N-1}) -(\alpha_1+\cdots+\alpha_t)} \subseteq \sum_{i=1}^{M+N-2} X_{\alpha_1+\cdots + \alpha_i} (U_{M,N}')_{\beta - (\alpha_1+\cdots+\alpha_i)}.
\end{align*}  
We return henceforth to the first case $s < M+N-1$ and conclude. It remains to consider the situation $\beta = (\alpha_1+\cdots+\alpha_{M+N-1}) + (\alpha_1+\cdots+\alpha_t)$ and we are left to ensure that
\begin{displaymath}  
X_{\alpha_1+\cdots+\alpha_{M+N-1}} X_{\alpha_1+\cdots+\alpha_{t}} \subseteq U_{M,N}'
\end{displaymath}
for $1 \leq t \leq M+N-2$. When $M+N \leq 3$, this can be checked by hand. Assume from now on $M+N > 3$. 

Suppose first that $t = 1$ and $\beta = \alpha_1 + (\alpha_1+\cdots+\alpha_{M+N-1})$. From  Definition \ref{def: positive root vectors},
\begin{displaymath}
X_{\alpha_1+\cdots+\alpha_{M+N-1}}(n) = X_{\alpha_1+\cdots+\alpha_{M+N-2}}(n) X_{M+N-1}(0) \pm q_{M+N-1} X_{M+N-1}(0)X_{\alpha_1+\cdots+\alpha_{M+N-2}}(n).  
\end{displaymath} 
Noting $X_{M+N-1}(0) X_1(a) = X_1(a) X_{M+N-1}(0)$ (since $M+N-1 \geq 3$), we get 
\begin{displaymath}
X_{\alpha_1+\cdots+\alpha_{M+N-1}}(n) X_1(a) \in \pm q_{M+N-1} X_{M+N-1}(0) X_{\alpha_1+\cdots+\alpha_{M+N-2}}(n) X_1(a) + (U_{M,N})_{\beta - \alpha_{M+N-1}} X_{M+N-1}. 
\end{displaymath}
The second term of the RHS is contained in $(U_{M,N}')_{\beta}$ thanks to Claim 2. Using Hypothesis B for $\beta - \alpha_{M+N-1}$ (or Hypothesis A for the subalgebra of $U_{M,N}$ generated by the $X_{i}(n)$ with $1 \leq i \leq M+N-2$), we have
\begin{displaymath}
X_{\alpha_1+\cdots + \alpha_{M+N-2}} X_1 \subseteq \sum_{i=1}^{M+N-3} X_{\alpha_1+\cdots+\alpha_i} (U_{M,N}')_{\alpha_1+\alpha_{i+1}+\cdots+\alpha_{M+N-2}}
\end{displaymath}
and we get $X_{\alpha_1+\cdots+\alpha_{M+N-1}}(n) X_1(a)  \in X_{M+N-1}(0) \sum\limits_{i=1}^{M+N-3} X_{\alpha_1+\cdots+\alpha_i} (U_{M,N}')_{\alpha_1 + \alpha_{i+1}+\cdots+\alpha_{M+N-2}} + U_{M,N}'$. Since $X_{M+N-1} X_{\alpha_1+\cdots+\alpha_{i}} = X_{\alpha_1+\cdots+\alpha_i} X_{M+N-1}$ for $1 \leq i \leq M+N-3$, we arrive at the first case $s < M+N-1$, and the first term the RHS is contained in $U_{M,N}'$.

Suppose next $2 \leq t \leq M+N-2$, so that $\beta = (\alpha_1+\cdots+\alpha_t) + (\alpha_1+\cdots+\alpha_{M+N-1})$ and $k = M+N-1 + t$. Note that
\begin{equation}  \label{equ: root vectors approx}
X_{\alpha_1+\cdots+\alpha_t}(a) \in X_t(0)X_{t-1}(0)\cdots X_2(0) X_1(a) + \sum_{i=2}^{M+N-1} (U_{M,N})_{\alpha_1+\cdots+\alpha_t - \alpha_i} X_i
\end{equation}
in view of Claim 2. It suffices to prove that for all $a,b \in \BZ$
\begin{displaymath}
v_{a,b} := X_{\alpha_1+\cdots+\alpha_{M+N-1}}(a) X_t(0)X_{t-1}(0) \cdots X_2(0) X_1(b) \in (U_{M,N}')_{\beta}.
\end{displaymath}
From Lemma \ref{lem: commutation relations}, we deduce that
\begin{displaymath}
v_{a,b} = \pm X_t(0) X_{t-1}(0) \cdots X_2(0) X_{\alpha_1+\cdots+\alpha_{M+N-1}}(a)X_1(b).
\end{displaymath}
Applying Hypothesis B to $\alpha_1+\cdots+\alpha_{M+N-1} + \alpha_1$ (which is of height $k - (t-1)$), we get
\begin{displaymath}
X_{\alpha_1+\cdots+\alpha_{M+N-1}}(a)X_1(b) \in \sum_{i=1}^{M+N-2}X_{\alpha_1+\cdots+\alpha_i} (U_{M,N}')_{\alpha_1+\alpha_{i+1}+\cdots+\alpha_{M+N-1}}.
\end{displaymath}
Applying Hypothesis B once more to $(\alpha_2+\cdots+\alpha_t)+(\alpha_1+\cdots+\alpha_i)$ (of height $< k-1$), we get
\begin{displaymath}
v_{a,b} \in \sum_{i=1}^{M+N-2} X_{\alpha_1+\cdots+\alpha_i} (U_{M,N}')_{\beta - (\alpha_1+\cdots+\alpha_i)}.
\end{displaymath}
As desired, we return to the first case $s < M+N-1$. 
\end{proof} 
{\it Step 3: \underline{ demonstration of Theorem \ref{thm: pbw}}.} Now we are ready to show that $(U_{M,N})_{\beta} \subseteq (U_{M,N}')_{\beta}$. Remark  that
\begin{displaymath}
(U_{M,N})_{\beta} = \sum_{i=1}^{M+N-1} X_i (U_{M,N})_{\beta - \alpha_i} = \sum_{i=1}^{M+N-1} X_i(U_{M,N}')_{\beta - \alpha_i}
\end{displaymath}
where the second equality comes from Hypothesis B applied to $\beta - \alpha_i$. We are led to verify that
\begin{displaymath}
X_i X_{\alpha_1+\cdots+\alpha_s} (U_{M,N})_{\beta - \alpha_i - (\alpha_1+\cdots+\alpha_s)} \subseteq (U_{M,N}')_{\beta}
\end{displaymath}
for $2 \leq i \leq M+N-1$ and $1 \leq s \leq M+N-1$. Assume furthermore $\beta = \alpha_i + (\alpha_1+\cdots+\alpha_s)$ (using the same argument as one in the proof of the first case of Claim 3), so that $k = s+1$. When $i \geq s+1$, thanks to Proposition \ref{prop: root system} and Relation \eqref{rel: Drinfel1}, it is clear that $X_{i}X_{\alpha_1+\cdots+\alpha_s} \subseteq (U_{M,N}')_{\beta}$. Suppose $i \leq s$. If $s < M+N-1$, then we are working in the subalgebra of $U_{M,N}$ generated by the $X_i(n)$ with $1 \leq i \leq M+N-2$ and $n \in \BZ$, Hypotheses A applied. Thus assume $s = M+N-1$ and we are to show
\begin{displaymath}
X_i X_{\alpha_1+\cdots+\alpha_{M+N-1}} \subseteq (U_{M,N}')_{\beta}
\end{displaymath} 
for all $1 \leq i \leq M+N-1$. Here $\beta = \alpha_i + (\alpha_1+\cdots+\alpha_{M+N-1})$ is of height $k = M+N$.

(a) Suppose that $i=1$. In view of Relation \eqref{equ: root vectors approx},
\begin{displaymath}
X_1(a)X_{\alpha_1+\cdots+\alpha_{M+N-1}}(b) \in X_1(a)X_{M+N-1}(0)X_{M+N-2}(0)\cdots X_2(0) X_1(b) + \sum_{j=2}^{M+N-1}(U_{M,N})_{\beta - \alpha_j} X_j.
\end{displaymath}
The second term of the RHS is contained in $(U_{M,N}')_{\beta}$ thanks to Claim 2. For the first term, from Proposition \ref{prop: root system} (or Hypotheses B applied  to $\beta - \alpha_1$), we get
\begin{displaymath}
X_1(a) X_{M+N-1}(0)X_{M+N-2}(0)\cdots X_2(0) \in \sum_{j=1}^{M+N-1} X_{\alpha_1+\cdots+\alpha_j} (U_{M,N})_{\alpha_{j+1}+\cdots + \alpha_{M+N-1}}
\end{displaymath}
and $X_1(a)X_{M+N-1}(0)X_{M+N-2}(0)\cdots X_2(0) X_1(b) \in \sum\limits_{j=1}^{M+N-1} X_{\alpha_1+\cdots+\alpha_j} (U_{M,N})_{\beta -(\alpha_1+\cdots+ \alpha_j)} \subseteq (U_{M,N}')_{\beta}$ thanks to Claims 1,3.

(b) Suppose  $i=M+N-1$. Following the proof of case (a), it is enough to verify that
\begin{displaymath}
w_{a,b} := X_{M+N-1}(a) X_{M+N-1}(0)X_{M+N-2}(0)\cdots X_2(0)X_1(b) \in (U_{M,N}')_{\beta}
\end{displaymath}
for $a,b \in \BZ$. From Relations \eqref{rel: Drinfel1}-\eqref{rel: Drinfeld2}, we get 
\begin{eqnarray*}
X_{M+N-1}(0)\cdots X_2(0)X_1(b) &\in & X_{M+N-1}(a)X_{M+N-2}(-a)X_{M+N-3}(0)\cdots X_2(0) X_1 \\
&\ & + \sum_{j=2}^{M+N-1} (U_{M,N})_{\alpha_1+\cdots +\alpha_{M+N-1} -\alpha_j} X_j
\end{eqnarray*} 
and $w_{a,b} \in X_{M+N-1}(a)^2X_{M+N-2}(-a)X_{M+N-3}(0)\cdots X_2(0)X_1 + \sum\limits_{j=2}^{M+N-1}(U_{M,N})_{\beta - \alpha_j} X_j$, the second term of the RHS being contained in $(U_{M,N}')_{\beta}$ thanks to Claim 2. For the first term: either $N = 1$ and we have $X_{M+N-1}(a)^2 = 0$; or $N > 1$ and the Serre relation of degree 3 between $X_{M+N-1}(a)$ and $X_{M+N-2}(-a)$ together with Relation \eqref{rel: Drinfel1} implies that 
\begin{displaymath}
X_{M+N-1}(a)^2X_{M+N-2}(-a)X_{M+N-3}(0)\cdots X_2(0)X_1 \in (U_{M,N})_{\beta -\alpha_{M+N-1}} X_{M+N-1}.
\end{displaymath}
Thus $w_{a,b} \in \sum\limits_{j=2}^{M+N-1} (U_{M,N})_{\beta - \alpha_j} X_j \subseteq (U_{M,N}')_{\beta}$ thanks to Claim 2.

(c) Suppose at last $1 < i < M+N-1$. As in the cases (a) and (b), it suffices to verify that 
\begin{displaymath}
u_{a,b} := X_i(a)X_{M+N-1}(0)X_{M+N-2}(0) \cdots X_2(0)X_1(b) \in (U_{M,N}')_{\beta}.
\end{displaymath}
for $a,b \in \mathbb{Z}$. An argument of Relation \eqref{rel: Drinfel1} shows that
\begin{displaymath}
u_{a,b} \in X_{M+N-1}(0)\cdots X_{i+2}(0)X_i(a)X_{i+1}(0)X_{i}(0)X_{i-1}X_{i-2}\cdots X_1.
\end{displaymath}
Next, using Relation \eqref{rel: Drinfeld2} between $X_i$ and $X_{i+1}$, together with Serre relations around $X_i$ of degree 3 and Relation \eqref{rel: ocillation4}  of degree 4  when $i=M$ (which guarantees $M,N > 1$), we get
\begin{equation}  \label{rel: serre and oscillation}
X_i(a)X_{i+1}(0)X_{i}(0)X_{i-1} \subseteq (U_{M,N})_{\alpha_{i-1}+\alpha_i+\alpha_{i+1}} X_i + (U_{M,N})_{\alpha_{i-1}+2\alpha_i} X_{i+1}.
\end{equation}
Now an argument of Relation \eqref{rel: Drinfel1} and Claim 2 ensure that
\begin{displaymath}
u_{a,b} \in (U_{M,N})_{\beta - \alpha_i} X_i + (U_{M,N})_{\beta - \alpha_{i+1}} X_{i+1} \subseteq (U_{M,N}')_{\beta}.
\end{displaymath}
This completes the  proof of Theorem \ref{thm: pbw}.  \hfill  $\Box$

\begin{remark}
In the proof of Theorem \ref{thm: pbw},  Relations \eqref{rel: Drinfel1}-\eqref{rel: Drinfeld2} were used repeatedly. The Serre relations of degree 3 appeared first in case (b) of Step 3. Then, to prove  \eqref{rel: serre and oscillation}, we find the Serre relations of degree 3 and  the oscillation relation of degree 4 (when $M,N > 1$) indispensable.
\end{remark}

\section{Representations of $U_q(\mathcal{L}\mathfrak{sl}(M,N))$}  \label{sec: 4}
In this section, we consider the analogy of Theorem \ref{thm: Weyl modules for quantum affine algebras} for quantum affine superalgebras. As we shall see, in the super case, corresponding to the odd isotopic root $\alpha_M$, Drinfel'd polynomials have to be replaced by formal series with torsion.
\subsection{Highest weight representations}


From Theorem \ref{thm: Weyl modules for quantum affine algebras}, we see that the highest weights of finite-dimensional simple $U_q(\mathcal{L}\mathfrak{sl}_N)$-modules are essentially elements of $(1 + z \mathbb{C}[z])^{N-1}$. For quantum affine superalgebras, this set is replaced by $\mathcal{R}_{M,N}$. 
\begin{definition}  \label{def: highest weight}
Define $\mathcal{R}_{M,N}$ to be the set of triples $(\underline{P}, f, c)$ such that:
\begin{itemize}
\item[(a)] $f(z) = \sum\limits_{n \in \mathbb{Z}} f_n z^n \in \mathbb{C}[[z,z^{-1}]]$ is a formal series annihilated by a non-zero polynomial;
\item[(b)] $c \in \mathbb{C} \setminus \{0\}$ with $\frac{c - c^{-1}}{q-q^{-1}} = f_0$;
\item[(c)] $\underline{P} = (P_i)_{1 \leq i \leq M+N-1, i \neq M}$ with $P_i(z) \in 1 + z \mathbb{C}[z]$ for all $1 \leq i \leq M+N-1$ and $i \neq M$. 
\end{itemize}
Define also $\tilde{\mathcal{R}}_{M,N}$ to be the set of $(\underline{P}, f, c; Q)$ such that: $(\underline{P},f,c) \in \mathcal{R}_{M,N}$ and 
\begin{itemize}
\item[(d)] $Q(z) \in 1 + z\mathbb{C}[z]$ with $Q(z) f(z) = 0$.
\end{itemize}
\end{definition}
For convention, we admit that $\mathcal{R}_{N,0} = \mathcal{R}_{0,N} = \tilde{\mathcal{R}}_{0,N} = \tilde{\mathcal{R}}_{N,0} = (1 + z\mathbb{C}[z])^{N-1}$.
\paragraph{Verma modules.} Let $(\underline{P},f,c) \in \mathcal{R}_{M,N}$. The  Verma module, denoted by $\Verma(\underline{P},f,c)$, is the $\Ud$-module generated by $v_{(\underline{P},f,c)}$ of $\super$-degree $\even$ subject to the relations
\begin{align}
& X_{i,n}^+ v_{(\underline{P},f,c)} = 0 \ \ \mathrm{for}\ 1 \leq i \leq M+N-1,\ n \in \mathbb{Z}, \label{rel: highest weight vector 1}  \\
&\sum_{n \in \BZ} \phi_{i,n}^{\pm}z^n v_{(\underline{P},f,c)} = q_i^{\deg P_i} \frac{P_i(z q_i^{-1})}{P_i(z q_i)} v_{(\underline{P},f,c)} \in \mathbb{C}v_{(\underline{P},f,c)} [[z^{\pm 1}]] \ \ \ \mathrm{for}\ 1 \leq i \leq M+N-1, i \neq M,  \label{rel: hwv2} \\
& K_M v_{(\underline{P},f,c)} =  c v_{(\underline{P},f,c)},\ \sum_{n \in \BZ} \frac{\phi_{M,n}^+ - \phi_{M,n}^-}{q-q^{-1}}z^n v_{(\underline{P},f,c)} = f(z) v_{(\underline{P},f,c)} \in \mathbb{C}v_{(\underline{P},f,c)} [[z,z^{-1}]].  \label{rel: hwv3}
\end{align} 
Note that $\Verma(\underline{P},f,c)$ has a natural $U_q(\mathcal{L}'\mathfrak{sl}(M,N))$-module structure by demanding
\begin{align}  
K_0 v_{(\underline{P},f,c)} = v_{(\underline{P},f,c)}.  \label{rel: auxillary}
\end{align}
From the triangular decomposition of $U_q(\mathcal{L}\mathfrak{sl}(M,N))$, we have an isomorphism of vector superspaces 
\begin{equation}  \label{equ: isom Verma}
U_q^-(\mathcal{L}\mathfrak{sl}(M,N)) \longrightarrow \Verma(\underline{P},f,c), \ \ x \mapsto x v_{(\underline{P},f,c)}.
\end{equation}
Later in \S \ref{sec: final section}, we will write Relation \eqref{rel: hwv3} in a form similar to Relation \eqref{rel: hwv2}. See Equation \eqref{equ: highest weight vector odd}. 
\paragraph{Weyl modules.} Let $(\underline{P},f,c;Q) \in \tilde{\mathcal{R}}_{M,N}$. The Weyl module, $\Weyl(\underline{P},f,c;Q)$, is the $\Ud$-module generated by $v_{(\underline{P},f,c)}$ of $\super$-degree $\even$ subject to Relations \eqref{rel: highest weight vector 1}-\eqref{rel: hwv3} and 
\begin{align}  
&(X_{i,0}^-)^{1+\deg P_i} v_{(\underline{P},f,c)} = 0 \ \ \ \mathrm{for}\ 1 \leq i \leq M+N-1, i \neq M, \label{rel: weyl even} \\
&\sum_{s=0}^d a_{d-s} X_{M,s}^- v_{(\underline{P},f,c)} = 0 \ \ \ \mathrm{where\ we\ understand}\ Q(z) = \sum_{s=0}^d a_s z^s \in 1 + z \BC[z].  \label{rel: Weyl odd}   
\end{align}
For the convention, when $(M,N) = (1,1)$, we shall replace Relation \eqref{rel: Weyl odd} by the following:
\begin{align}
&\sum_{s=0}^d a_{d-s} X_{M,s+n}^- v_{(\underline{P},f,c)} = 0 \ \ \ \mathrm{for\ all}\ n \in \BZ.  \label{rel: Weyl odd for M=N=1}
\end{align}
Note that $\Weyl(\underline{P},f,c;Q)$ is endowed with an $U_q(\mathcal{L}'\mathfrak{sl}(M,N))$-module structure through Relation \eqref{rel: auxillary}.
\paragraph{Simple modules.} Let $(\underline{P},f,c) \in \mathcal{R}_{M,N}$. From the isomorphism \eqref{equ: isom Verma} we see that the  $U_q(\mathcal{L}'\mathfrak{sl}(M,N))$-module $\Verma(\underline{P},f,c)$ has a weight space decomposition (see Notation \ref{notation: weight})
\begin{eqnarray*}
&&\Verma(\underline{P},f,c) = \bigoplus_{\mu \in \lambda_{(\underline{P},f,c)} - Q_{M,N}^+} (\Verma(\underline{P},f,c))_{\mu}\ \ \mathrm{with}\\
&& (\Verma(\underline{P},f,c))_{\mu} = \{ x \in \Verma(\underline{P},f,c)\ |\ K_i x = \mu(K_i) x \ \mathrm{for}\ 0 \leq i \leq M+N-1 \}
\end{eqnarray*}
where $\lambda_{(\underline{P},f,c)} \in P_{M,N}$ is given by: $K_0 \mapsto 1$; $K_M \mapsto c$; $K_i \mapsto q_i^{\deg P_i}$ for $1 \leq i \leq M+N-1$ and $i \neq M$. In particular, $(\Verma(\underline{P},f,c))_{\lambda_{(\underline{P},f,c)}} = \mathbb{C} v_{(\underline{P},f,c)}$ is  one-dimensional. In consequence, there is a unique quotient of $\Verma(\underline{P},f,c)$ which is  simple as a $U_q(\mathcal{L}'\mathfrak{sl}(M,N))$-module. This leads to the following
\begin{definition}   \label{def: simple modules}
For $(\underline{P},f,c) \in \mathcal{R}_{M,N}$, let $\simple'(\underline{P},f,c)$ be the simple quotient of $\Verma(\underline{P},f,c)$ as $U_q(\mathcal{L}'\mathfrak{sl}(M,N))$-module.  
\end{definition} 
\begin{remark}  \label{remark: weyl simple}
(1) $\simple'(\underline{P},f,c)$ is  not necessarily simple as a $\Ud$-module.

(2) By definition, we have natural epimorphisms of $U_q(\mathcal{L}'\mathfrak{sl}(M,N))$-modules: 
\begin{displaymath}
\Verma(\underline{P},f,c) \twoheadrightarrow \simple'(\underline{P},f,c),\ \ \Verma(\underline{P},f,c) \twoheadrightarrow \Weyl(\underline{P},f,c;Q),  v_{(\underline{P},f,c)} \mapsto v_{(\underline{P},f,c)}
\end{displaymath}
for all $(\underline{P},f,c;Q) \in \tilde{\mathcal{R}}_{M,N}$. If $\Weyl(\underline{P},f,c;Q) \neq 0$, then the first epimorphism factorises through the second. We shall see in the next section that this is indeed always the case.

\end{remark}
\begin{lem}  \label{lem: extension}
If $M \neq N$, then $\simple'(\underline{P},f,c)$ is a simple $\Ud$-module for all $(\underline{P},f,c) \in \mathcal{R}_{M,N}$.
\end{lem}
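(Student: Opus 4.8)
The plan is to exploit the fact that $U_q(\mathcal{L}'\mathfrak{sl}(M,N))$ differs from $\Ud$ only by the adjunction of the grouplike $K_0^{\pm 1}$. Write $S = \simple'(\underline{P},f,c)$ and $\lambda = \lambda_{(\underline{P},f,c)}$. Since $\Ud \subseteq U_q(\mathcal{L}'\mathfrak{sl}(M,N))$ and $S$ is simple over the larger algebra, it suffices to show that every nonzero $\Ud$-submodule $W \subseteq S$ is automatically stable under $K_0$; for then $W$ is a $U_q(\mathcal{L}'\mathfrak{sl}(M,N))$-submodule, hence equals $S$. As a quotient of $\Verma(\underline{P},f,c)$, the module $S$ inherits the weight-space decomposition $S = \bigoplus_{\mu \in \lambda - Q_{M,N}^+} S_\mu$, on which each $K_i$ with $0 \leq i \leq M+N-1$ acts by the scalar $\mu(K_i)$.

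The key point, and the place where the hypothesis $M \neq N$ enters, is the linear-algebra statement that the Cartan matrix $(c_{i,j})_{1 \leq i,j \leq M+N-1}$ is invertible precisely when $M \neq N$. I would establish this by a direct determinant computation: from $c_{i,i} = l_i + l_{i+1}$, $c_{i,i+1} = -l_{i+1}$ and $c_{i+1,i} = -l_{i+1}$ one gets a tridiagonal matrix, and expanding shows that its determinant vanishes exactly when $M = N$, the source of the degeneracy being the isotropic node $c_{M,M} = l_M + l_{M+1} = 0$.

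Next I would combine this with the hypothesis that $q$ is not a root of unity. If $\mu, \mu' \in \lambda - Q_{M,N}^+$ are distinct, write $\mu - \mu' = \sum_i n_i \alpha_i$ with $(n_i) \in \mathbb{Z}^{M+N-1} \setminus \{0\}$. Were $\mu$ and $\mu'$ to agree on every $K_j$ with $1 \leq j \leq M+N-1$, then $q^{\sum_i n_i c_{i,j}} = 1$ for all such $j$, forcing $\sum_i n_i c_{i,j} = 0$, i.e. $(n_i) \in \ker(c_{i,j})$; invertibility then gives $(n_i) = 0$, a contradiction. Hence the joint character $(\mu(K_1), \dots, \mu(K_{M+N-1}))$ of the $\Ud$-torus already separates the weights occurring in $S$, and in particular $\mu(K_0)$ is determined by these values.

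Finally, given a nonzero $\Ud$-submodule $W$ and a vector $w = \sum_\mu w_\mu \in W$ written as a finite sum of weight vectors, the operators $K_1, \dots, K_{M+N-1} \in \Ud$ act semisimply with pairwise distinct joint eigenvalues on the finitely many weights appearing in $w$, so a Lagrange-type interpolation polynomial $p$ in these $K_j$ satisfies $p(K_1, \dots, K_{M+N-1})\, w = w_\mu$; thus each $w_\mu \in W$ and $W = \bigoplus_\mu (W \cap S_\mu)$. Each summand lies in a single weight space, which is a $K_0$-eigenspace, so $W$ is $K_0$-stable and the reduction is complete. I expect the only genuine obstacle to be the determinant computation establishing invertibility of the Cartan matrix for $M \neq N$; once the weights are separated, the rest is formal.
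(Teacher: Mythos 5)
Your proposal is correct and takes essentially the same route as the paper: the paper's proof also reduces the lemma to the observation that, for $M \neq N$, the restriction of the root lattice $Q_{M,N}$ to characters of the subalgebra generated by $K_1,\dots,K_{M+N-1}$ is injective (equivalently, since $q$ is not a root of unity, the Cartan matrix $(c_{i,j})_{1\le i,j\le M+N-1}$ has trivial kernel), so the weight decomposition with respect to $A'_{M,N}$ coincides with that with respect to $A_{M,N}$ and every $\Ud$-submodule is automatically $K_0$-stable. The only difference is one of detail: you spell out the Cartan-matrix invertibility and the interpolation argument for weight-gradedness of submodules, both of which the paper asserts implicitly.
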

\begin{proof}
Let $A_{M,N}$ be the subalgebra of $\Ud$ generated by $K_i^{\pm 1}$ for $1 \leq i \leq M+N-1$. As in Notation \ref{notation: weight}, there is a unique abelian group structure over $\Alg(A_{M,N}, \mathbb{C})$ so that the inclusion $A_{M,N} \hookrightarrow A_{M,N}'$ induces a group homomorphism $\iota: P_{M,N} \longrightarrow \Alg(A_{M,N},\mathbb{C}), \alpha \mapsto \alpha|_{A_{M,N}}$.

If $M \neq N$, then the restriction $\iota|_{Q_{M,N}}: Q_{M,N} \longrightarrow \Alg(A_{M,N}, \mathbb{C})$ is  injective. It follows that the decomposition in weight spaces of $\Verma(\underline{P},f,c)$ with respect to $\Alg(A_{M,N}',\mathbb{C})$ is exactly one with respect to $\Alg(A_{M,N},\mathbb{C})$. Thus, all sub-$\Ud$-modules of $\Verma(\underline{P},f,c)$ are sub-$U_q(\mathcal{L}'\mathfrak{sl}(M,N))$-modules.
\end{proof}

\subsection{Main result}   \label{sec: 4.2}
In this section, we shall see that the Weyl modules we defined before are always finite-dimensional and non-zero, a generalisation of Theorem \ref{thm: Weyl modules for quantum affine algebras} (a). More precisely, we have
\begin{theorem}  \label{thm: main}
For all $(\underline{P},f,c;Q) \in \tilde{\mathcal{R}}_{M,N}$, we have $\deg Q < \dim \Weyl(\underline{P},f,c;Q) < \infty$.
\end{theorem}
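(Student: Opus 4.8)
The plan is to prove the two inequalities by entirely separate arguments: the lower bound $\deg Q < \dim \Weyl(\underline{P},f,c;Q)$ by a hands-on computation of one weight space via the triangular decomposition (Theorem \ref{thm: triangular decomposition}), and the upper bound $\dim \Weyl(\underline{P},f,c;Q) < \infty$ by an induction on $(M,N)$ that uses the PBW spanning set (Theorem \ref{thm: pbw}) to control the module. Throughout let $v$ denote the cyclic generator of $W := \Weyl(\underline{P},f,c;Q)$, let $\lambda$ be its weight, write $J$ for the submodule of $\Verma(\underline{P},f,c)$ that we quotient out, and abbreviate the three factors of Theorem \ref{thm: triangular decomposition} by $U^-,U^0,U^+$.

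For the lower bound I would concentrate on the weight space $W_{\lambda-\alpha_M}$. By Theorem \ref{thm: pbw} the space $(U^-)_{-\alpha_M}$ is spanned by the $X_{M,n}^-$, so $W_{\lambda-\alpha_M}$ is spanned by the vectors $w_n:=X_{M,n}^-v$, $n\in\BZ$. Since $X_{M,m}^{\pm}$ are odd and $X_{M,m}^+v=0$, Relations \eqref{rel: triangular2} and \eqref{rel: hwv3} yield the pairing $X_{M,m}^+w_n=f_{m+n}\,v$. The defining relation \eqref{rel: Weyl odd} reads $u_0:=\sum_{s=0}^d a_{d-s}w_s=0$, and one checks $X_{M,m}^+u_0=(Qf)_{m+d}\,v=0$ and $X_{i,m}^+u_0=0$ for $i\neq M$, so $u_0$ is a singular vector; its submodule therefore meets $W_{\lambda-\alpha_M}$ exactly in $U^0u_0$. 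The decisive point is that $U^0$ here does produce new relations: because $c_{M-1,M}$ and $c_{M+1,M}$ are nonzero (and at least one of the nodes $M\pm 1$ exists once $M+N\ge 3$), Relation \eqref{rel: triangular1} makes $[h_{M\pm 1,s},X_{M,n}^-]$ a nonzero multiple of $X_{M,n+s}^-$, so applying $h_{M\pm 1,s}$ to $u_0=0$ gives the shifted relations $u_n:=\sum_s a_{d-s}w_{s+n}=0$ for all $n\in\BZ$ (for $(M,N)=(1,1)$ these are imposed directly by \eqref{rel: Weyl odd for M=N=1}). Identifying $w_n\leftrightarrow t^n$, the span of the $u_n$ is the ideal $(\widetilde Q)$ of $\BC[t,t^{-1}]$ with $\widetilde Q(t)=\sum_s a_{d-s}t^s=t^dQ(t^{-1})$; as $\widetilde Q(0)=a_d\neq 0$ and $\deg\widetilde Q=d$, one gets $W_{\lambda-\alpha_M}\cong\BC[t,t^{-1}]/(\widetilde Q)$ of dimension exactly $\deg Q$. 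Finally $W_\lambda=\BC v\neq 0$: the only way $J$ could reach weight $\lambda$ is by raising $r_i:=(X_{i,0}^-)^{1+\deg P_i}v$ through $U^+_{(1+\deg P_i)\alpha_i}$, and that contribution vanishes precisely because the rank-one Weyl module of $\widehat U_i\cong U_{q_i}(\mathcal{L}\mathfrak{sl}_2)$ is nonzero (Theorem \ref{thm: Weyl modules for quantum affine algebras}(a)). Hence $\dim W\ge 1+\deg Q>\deg Q$; this also shows $W$ determines $Q$, justifying the highest-weight terminology and the claim of Remark \ref{remark: weyl simple} and Definition \ref{def: simple modules}.

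For finiteness I would induct on $M+N$, the base cases being $MN=0$ (Theorem \ref{thm: Weyl modules for quantum affine algebras}) and $(M,N)=(1,1)$, where $\Delta_{M,N}=\{\alpha_M\}$, the $X_{M,n}^-$ square to zero and anticommute, and the truncation above bounds $\dim W$ by $2^{\deg Q}$. In the inductive step I would first show the set of $P_{M,N}$-weights of $W$ is finite: the degree-zero generators $X_{i,0}^{\pm},K_i$ span a homomorphic image of $U_q(\mathfrak{sl}(M,N))$ on which $v$ is a highest weight vector and $W$ is integrable (local nilpotency of $X_{i,0}^-$ comes from \eqref{rel: weyl even} for $i\neq M$ and from $(X_{M,0}^-)^2=0$ for $i=M$), so by the Kac-module theory (Theorem \ref{thm: Kac modules for quantum superalgebra}) the weights lie in a finite region; in particular the number of PBW factors of any fixed weight is bounded. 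Transporting Theorem \ref{thm: pbw} to $U^-$ via $\tau_1$, each weight space is spanned by ordered products of root vectors $X_\beta(n)^-$, and it remains to bound, for each $\beta\in\Delta_{M,N}$, the loop modes $n$ occurring in such products. Ordering $\Delta_{M,N}$ with the roots containing $\alpha_1$ first, the root vectors avoiding $\alpha_1$ lie in a subalgebra $\cong U_{M-1,N}$ and those avoiding $\alpha_{M+N-1}$ in one $\cong U_{M,N-1}$, whose sub-Weyl-modules are finite-dimensional by the induction hypothesis, leaving only the long root $\theta=\alpha_1+\cdots+\alpha_{M+N-1}$ genuinely new.

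The hard part, as expected, is this last step: bounding the loop modes of the composite root vectors, above all $X_\theta(n)^-$, inside an arbitrary PBW monomial rather than when applied to $v$ alone. The plan is to use the degree-two Drinfel'd relations \eqref{rel: Drinfel1}--\eqref{rel: Drinfeld2} to rewrite a factor of high mode as a combination of factors of lower mode plus strictly lower PBW monomials, feeding in the mode bounds from the rank-one subalgebras ($\widehat U_i\cong U_{q_i}(\mathcal{L}\mathfrak{sl}_2)$ and its finite Weyl module for $i\neq M$, and the truncation of the $w_n$ for $i=M$) together with the induction hypothesis for $U_{M-1,N}$ and $U_{M,N-1}$. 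Combined with the finiteness of the weight set and of the number of factors, this should reduce every monomial to a finite spanning set and give $\dim W<\infty$. I expect the genuine obstacle to be exactly that the mode of $X_\theta(n)^-$ cannot be controlled by a single commutation on $v$ but only within products, which is what forces the recursion on $(M,N)$ instead of a one-step estimate.
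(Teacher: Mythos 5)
Your lower\mbox{-}bound argument is, in substance, the paper's own Part I of the proof of Theorem \ref{thm: main}: the same singular\mbox{-}vector computation ($X_{M,m}^+u_0=(Qf)_{m+d}\,v=0$, and $X_{j,m}^+u_0=0$ for $j\neq M$), the same use of the rank\mbox{-}one Verma/Weyl theory over $U_{q_i}(\mathcal{L}\mathfrak{sl}_2)$ to kill the contribution of the vectors $(X_{i,0}^-)^{1+\deg P_i}v$, and the same identification of $(\Weyl)_{\lambda-\alpha_M}$ with $\BC[t,t^{-1}]/(\widetilde{Q})$ via the shifts produced by $h_{M\pm1,s}$ (with the $(1,1)$ case handled by \eqref{rel: Weyl odd for M=N=1}). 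This half is correct.

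The finiteness half, however, breaks at its first step. You assert that $X_{i,0}^-$ ($i\neq M$) acts locally nilpotently on all of $\Weyl$ \emph{because of} \eqref{rel: weyl even}; but that relation only annihilates a power of $X_{i,0}^-$ on the highest weight vector. To propagate nilpotency to $U_q^-(\mathcal{L}\mathfrak{sl}(M,N))v$ you would need $\mathrm{ad}(X_{i,0}^-)$ to be locally nilpotent on $U_q^-(\mathcal{L}\mathfrak{sl}(M,N))$, and it is not: pushing $X_{i,0}^-$ past a factor $X_{j,n}^-$ via Relation \eqref{rel: Drinfeld2} creates terms involving $X_{i,-1}^-$ (and, iterating, arbitrary modes), about which \eqref{rel: weyl even} says nothing. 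This integrability statement is precisely the non\mbox{-}trivial point — the paper emphasizes that it is non\mbox{-}trivial even for quantum affine algebras — and in the paper's proof the local nilpotency (of $X_{1,0}^{\pm}$ only) is a \emph{consequence} of the PBW\mbox{-}based weight estimates \eqref{estimation 1}--\eqref{estimation 2}, not an input; your proposal inverts that logical order. The appeal to Theorem \ref{thm: Kac modules for quantum superalgebra}(c) is also misplaced: that statement concerns modules \emph{generated by} a highest weight vector over $U_q(\mathfrak{gl}(M,N))$, whereas $\Weyl$ is cyclic only over the loop algebra (all modes $X_{i,n}^-$ are needed to generate it), so no bound on its weight set follows; indeed the paper remarks that the even Weyl group $\mathfrak{S}_M\times\mathfrak{S}_N$ is too small to force finiteness of the weight set in the super case. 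Finally, integrability could never be the engine of a finiteness proof here: the universal Weyl module $\Weyl(\underline{P},f,c)$ of \S\ref{sec: 4} is integrable yet infinite\mbox{-}dimensional.

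What your plan concedes as ``the hard part'' — bounding loop modes inside products, not just on $v$ — is exactly the content the paper supplies and your proposal does not. The paper (i) gets finite\mbox{-}dimensionality of each weight space from the Chari--Pressley argument using only the degree\mbox{-}two Drinfel'd relations, and (ii) gets finiteness of the weight set from the two PBW factorizations $U_q^-(\mathcal{L}\mathfrak{sl}(M,N))=\sum (X_1^-)^{d_1}(X^-_{\alpha_1+\alpha_2})^{d_2}\cdots(X^-_{\alpha_1+\cdots+\alpha_{M+N-1}})^{d_{M+N-1}}U^-_{M-1,N}$ and $U_q^-(\mathcal{L}\mathfrak{sl}(M,N))=U^-_{M-1,N}\bigl(\sum_{d\geq 0}(X^-_{\alpha_1+\cdots+\alpha_{M+N-1}})^{d}\bigr)U^-_{M,N-1}$, applying the induction hypothesis to $U^-_{M-1,N}v$ and $U^-_{M,N-1}v$ to obtain \eqref{estimation 1} and \eqref{estimation 2}, and only then deducing node\mbox{-}$1$ local nilpotency and the reflection estimate \eqref{estimation 3}, which together bound all coefficients $u_i$. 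Without these steps (or a genuine substitute for them), your induction does not close.
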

As an immediate consequence (Remark \ref{remark: weyl simple})
\begin{cor}
For all $(\underline{P},f,c;Q) \in \tilde{\mathcal{R}}_{M,N}$, there are epimorphisms of $U_q(\mathcal{L}'\mathfrak{sl}(M,N))$-modules
\begin{displaymath}
\Verma(\underline{P},f,c) \twoheadrightarrow \Weyl(\underline{P},f,c;Q) \twoheadrightarrow \simple'(\underline{P},f,c).
\end{displaymath}
In particular, $\simple'(\underline{P},f,c)$ is  finite-dimensional.
\end{cor}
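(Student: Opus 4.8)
The plan is to prove the two inequalities in Theorem \ref{thm: main} separately: the lower bound $\deg Q < \dim\Weyl(\underline{P},f,c;Q)$ by a direct computation in the two top weight spaces, built on the triangular decomposition (Theorem \ref{thm: triangular decomposition}), and the upper bound $\dim\Weyl(\underline{P},f,c;Q)<\infty$ by induction on $(M,N)$ using the PBW spanning set (Theorem \ref{thm: pbw}). Write $v:=v_{(\underline{P},f,c)}$, $\lambda:=\lambda_{(\underline{P},f,c)}$, $d:=\deg Q$, $Q(z)=\sum_{s=0}^d a_s z^s$ with $a_0=1$, $a_d\neq 0$, and let $J$ be the $\Ud$-submodule of $\Verma(\underline{P},f,c)$ with $\Weyl(\underline{P},f,c;Q)=\Verma(\underline{P},f,c)/J$; it is generated by $r_i:=(X_{i,0}^-)^{1+\deg P_i}v$ for $i\neq M$ and by $r_M:=\sum_{s=0}^d a_{d-s}X_{M,s}^- v$.

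For the lower bound, note that $X_{M,n}^{\pm}$ is odd, so Relation \eqref{rel: Drinfel1} (with $c_{M,M}=0$) gives $(X_{M,n}^-)^2=0$, while Relation \eqref{rel: triangular2} together with $X_{M,m}^+v=0$ yields the key identity $X_{M,m}^+X_{M,n}^- v=f_{m+n}v$. The space $(\Verma(\underline{P},f,c))_{\lambda-\alpha_M}$ is free on $\{X_{M,n}^- v:n\in\BZ\}$. I would first show $R_t:=\sum_{s=0}^d a_{d-s}X_{M,s+t}^- v\in J$ for all $t$: when $M+N\geq 3$ pick a neighbour $j\in\{M-1,M+1\}$ of $M$, so $c_{j,M}=-1$ and, $q$ not being a root of unity, $[h_{j,s},X_{M,n}^-]=-\tfrac{[sl_jc_{j,M}]_{q_j}}{s}X_{M,n+s}^-$ has nonzero coefficient; using $h_{j,s}v\in\BC v$ one extracts $R_s\in J$ for every $s\neq 0$ (the case $(1,1)$ being imposed directly by \eqref{rel: Weyl odd for M=N=1}). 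Under $X_{M,n}^- v\mapsto z^n$, the quotient $\mathrm{span}\{X_{M,n}^- v\}/\langle R_t\rangle$ becomes $\BC[z,z^{-1}]/(z^dQ(z^{-1}))$, of dimension exactly $d$, so $\dim\Weyl(\underline{P},f,c;Q)_{\lambda-\alpha_M}\leq d$.

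To see there is no further collapse, i.e. $J_\lambda=0$ and $J_{\lambda-\alpha_M}=\langle R_t\rangle$, I raise the generators back up. For $r_M$ one computes $X_{M,m}^+ r_M=(Qf)_{m+d}\,v=0$ since $Qf=0$, so $r_M$ contributes nothing in weight $\lambda$ and precisely $\langle R_t\rangle$ in weight $\lambda-\alpha_M$. For $r_i$ with $i\neq M$, reaching weight $\lambda$ or $\lambda-\alpha_M$ from $\lambda-(1+\deg P_i)\alpha_i$ forces, by the triangular decomposition and $Q_{M,N}$-homogeneity, pure $\alpha_i$-raising inside the subalgebra $\widehat U_i\cong U_{q_i}(\mathcal{L}\mathfrak{sl}_2)$; since the corresponding $\mathfrak{sl}_2$-Weyl module $\widehat U_i v/\widehat U_i r_i$ is nonzero with one-dimensional top (Theorem \ref{thm: Weyl modules for quantum affine algebras}(a)), these contributions vanish. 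Hence $\dim\Weyl(\underline{P},f,c;Q)_\lambda=1$ and $\dim\Weyl(\underline{P},f,c;Q)_{\lambda-\alpha_M}=d$, giving $\dim\Weyl(\underline{P},f,c;Q)\geq d+1$.

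For the upper bound I would induct on $(M,N)$, the cases $MN=0$ (ordinary quantum affine $\mathfrak{sl}$, Theorem \ref{thm: Weyl modules for quantum affine algebras}(a)) and $(1,1)$ (a direct Clifford-type analysis using $(X_{M,n}^-)^2=0$ and \eqref{rel: Weyl odd for M=N=1}) being the base. For $M+N\geq 3$ the module is integrable: \eqref{rel: weyl even} makes $X_{i,0}^-$ ($i\neq M$) locally nilpotent, $(X_{M,0}^-)^2=0$, and $X_{i,n}^+v=0$ forces local nilpotency of the $X_{i,0}^+$; integrability together with $\mathrm{wt}(\Weyl(\underline{P},f,c;Q))\subseteq\lambda-Q_{M,N}^+$ forces the weight set to be finite by finite-type highest-weight theory (cf. Theorem \ref{thm: Kac modules for quantum superalgebra}). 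It then suffices to bound each weight space. By Theorem \ref{thm: pbw}, transported to $U_q^-(\mathcal{L}\mathfrak{sl}(M,N))$ via Corollary \ref{cor: canonical isomorphisms}, $\Weyl(\underline{P},f,c;Q)$ is spanned by ordered PBW monomials in the negative root vectors applied to $v$; for a fixed weight only finitely many exponent families $(c_\beta)$ occur, so the issue is to confine the spectral parameters $n_{i,\beta}$ to a finite range. Root vectors $X_\beta$ with $\beta$ avoiding $\alpha_1$ (resp. $\alpha_{M+N-1}$) lie in a corner subalgebra $\cong U_q(\mathcal{L}\mathfrak{sl}(M-1,N))$ (resp. $U_q(\mathcal{L}\mathfrak{sl}(M,N-1))$), and the module they generate on $v$ is a quotient of a smaller Weyl module, finite-dimensional by the inductive hypothesis, which bounds their parameters; the remaining root vectors attached to both ends are reduced by reordering with the Drinfel'd relations \eqref{rel: Drinfel1}-\eqref{rel: Drinfeld2} and the relations \eqref{rel: weyl even}-\eqref{rel: Weyl odd}.

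The main obstacle is exactly this last reduction: since the loop index $n$ in $X_\beta(n)$ does not change the $Q_{M,N}$-weight, finitely many weights and finite corner submodules do not by themselves bound the spectral parameters, and one must show that large and small $n$ are redundant modulo $J$, combining the degree-$2$ Drinfel'd relations and the integrability relations with the inductive finiteness in a delicate bookkeeping. The lower bound, by contrast, is essentially formal once $X_{M,m}^+X_{M,n}^- v=f_{m+n}v$ and the identification with $\BC[z,z^{-1}]/(z^dQ(z^{-1}))$ are in place. Finally the displayed Corollary is immediate: Theorem \ref{thm: main} gives $\dim\Weyl(\underline{P},f,c;Q)>\deg Q\geq 0$, so the Weyl module is nonzero, whence by Remark \ref{remark: weyl simple}(2) the projection $\Verma(\underline{P},f,c)\twoheadrightarrow\simple'(\underline{P},f,c)$ factors through it, and $\dim\Weyl(\underline{P},f,c;Q)<\infty$ forces $\simple'(\underline{P},f,c)$ to be finite-dimensional.
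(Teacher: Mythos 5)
Your closing paragraph is precisely the paper's proof of this Corollary: by Theorem \ref{thm: main}, $\dim\Weyl(\underline{P},f,c;Q)>\deg Q\geq 0$, so the Weyl module is a nonzero quotient of $\Verma(\underline{P},f,c)$; by Remark \ref{remark: weyl simple}(2) the epimorphism onto $\simple'(\underline{P},f,c)$ then factors through $\Weyl(\underline{P},f,c;Q)$, and finite-dimensionality of $\simple'(\underline{P},f,c)$ follows from that of the Weyl module. Your Part I (analysis of the two top weight spaces, giving $\deg Q<\dim\Weyl$) also agrees with the paper's argument. Had you cited Theorem \ref{thm: main} and stopped there, the proof would be complete and identical to the paper's.

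The genuine gap is in your reconstruction of the finiteness half of Theorem \ref{thm: main}. The assertion that \lq\lq integrability together with $\mathrm{wt}(\Weyl)\subseteq\lambda-Q_{M,N}^+$ forces the weight set to be finite by finite-type highest-weight theory\rq\rq\ is false for superalgebras, and this failure is the central subtlety of the paper. Theorem \ref{thm: Kac modules for quantum superalgebra}(c) cannot be invoked: $\Weyl$ is generated by $v$ only over the whole loop superalgebra, not over the finite-type subalgebra, so it is not a highest weight module for the latter; and the Weyl-chamber argument underlying the classical statement breaks down because the even Weyl group $\mathfrak{S}_M\times\mathfrak{S}_N$ does not move the odd directions --- the paper states explicitly that this group \lq\lq is not enough to ensure the finiteness of $P$\rq\rq, and that integrable highest weight $\nRightarrow$ finite-dimensional here (the universal Weyl modules of Proposition \ref{prop: universal Weyl modules are integrable} are integrable yet infinite-dimensional; for $(M,N)=(1,1)$ even the Verma module is integrable, with nonzero weight spaces at $\lambda-k\alpha_M$ for every $k\geq 0$, since $U_q^-(\mathcal{L}\mathfrak{sl}(1,1))$ is an exterior algebra on infinitely many generators). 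You also have the division of labour inverted: the \lq\lq main obstacle\rq\rq\ you flag --- confining the spectral parameters $n_{i,\beta}$ inside a fixed weight space --- is the part that needs only the degree-$2$ Drinfel'd relations and is settled by copying \cite[Proposition 4.4]{CP3}; what genuinely requires the PBW spanning set of Theorem \ref{thm: pbw} is the finiteness of the weight set itself, which the paper extracts from two orderings of the PBW monomials (the corner subalgebras $U_{M-1,N}^-$ and $U_{M,N-1}^-$ acting on $v$ being finite-dimensional by induction), yielding the estimates $u_i-u_j>-C_1$ for $i<j$ and $u_2-u_1>-C_2$, hence local nilpotency of $X_{1,0}^{\pm}$ on $\Weyl$, and finally the $\mathfrak{sl}_2$-reflection $s_1$ giving the third estimate that bounds all the $u_i$.
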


\begin{remark}
 When $MN = 0$, we understand $Q(z) = 1$, and Theorem \ref{thm: main} above becomes Theorem \ref{thm: Weyl modules for quantum affine algebras} (a). When $M=N=1$,  $U_q^-(\mathcal{L}\mathfrak{sl}(1,1))$ is an exterior algebra,  and Relation \eqref{rel: Weyl odd for M=N=1} guarantees that $\Weyl(f,c;Q)$ be finite-dimensional. 
\end{remark}
To prove Theorem \ref{thm: main}, one can assume  $M > 1, N \geq 1$ due to the following:

\begin{lem} \label{lem: automorphism of Dynkin diagrams}
Suppose $MN > 0$. The following defines a superalgebra isomorphism: 
\begin{displaymath}
\begin{cases}
\pi_{M,N}: U_q(\mathcal{L}\mathfrak{sl}(M,N)) \longrightarrow U_q(\mathcal{L}\mathfrak{sl}(N,M)) \\
K_i \mapsto K_{M+N-i}^{-1},\ X_{i,n}^{+} \mapsto X_{M+N-i, -n}^{+},\ X_{i,n}^- \mapsto (-1)^{p(\alpha_i)} X_{M+N-i, -n}^-,\ h_{i,s} \mapsto (-1)^{p(\alpha_i)} h_{M+N-i,-s}
\end{cases}
\end{displaymath}
for $1 \leq i \leq M+N-1, n \in \BZ, s \in \BZ_{\neq 0}$. Here $p \in \hom_{\BZ}(Q_{M,N},\super)$ is the parity map in Remark \ref{rmk: parity}.
\end{lem}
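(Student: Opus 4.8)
The plan is to exploit the Drinfel'd presentation of Definition~\ref{def: Drinfeld presentation}: since $\Ud$ is presented by its generators, their $\super$-degrees, and Relations \eqref{rel: Cartan}--\eqref{rel: ocillation4}, it suffices to check that the proposed images of the generators are $\super$-homogeneous of the correct degree and satisfy the corresponding relations of $U_q(\mathcal{L}\mathfrak{sl}(N,M))$; a well-defined homomorphism then results. Conceptually $\pi_{M,N}$ is the Dynkin-diagram reversal $i \mapsto M+N-i$ twisted by the mode inversion $n \mapsto -n$ (equivalently $z \mapsto z^{-1}$ on the Drinfel'd currents). First I would record the combinatorial identities that drive everything. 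Writing $c'$, $l'$, $q'_a = q^{l'_a}$ for the data of $\mathfrak{sl}(N,M)$ (whose odd node sits at position $N$), one reads off from \eqref{equ: bilinear form canonnical} that the reversal is an anti-isometry of the weight lattice, so that
\begin{align*}
c'_{M+N-i,\,M+N-j} = -c_{i,j},\quad l'_{M+N-i} = -l_i\ (i\neq M),\quad l'_N = l_M,\quad q'_{M+N-i} = q_i^{-1}\ (i \neq M),\ q'_N = q_M.
\end{align*}
In particular the odd node $i=M$ maps to the odd node of $\mathfrak{sl}(N,M)$, so $\pi_{M,N}$ preserves parity, and the anomaly $l'_N = +l_M$ (rather than $-l_M$) at the isotropic root is exactly what the signs $(-1)^{p(\alpha_i)}$ attached to $h_{i,s}$ and $X_{i,n}^-$ are designed to absorb.

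With these identities in hand I would verify the relations in increasing order of difficulty. Relation \eqref{rel: Cartan} is immediate, since $\pi_{M,N}$ permutes the $K_i^{\pm1}$ and the $h_{i,s}$ among themselves. For \eqref{rel: triangular1}, the $K$--$X$ part follows from $\pi_{M,N}(K_i)\,X_{M+N-j,-n}^+\,\pi_{M,N}(K_i)^{-1} = q^{-c'_{M+N-i,M+N-j}} X_{M+N-j,-n}^+ = q^{c_{i,j}}X_{M+N-j,-n}^+$, and the $h$--$X$ part from $[x]_{q^{\pm1}} = [x]_q$ together with $l'_{M+N-i}c'_{M+N-i,M+N-j} = -l'_{M+N-i}c_{i,j}$, the sign $(-1)^{p(\alpha_i)}$ and the mode reversal $s \mapsto -s$ conspiring to reproduce $\frac{[s l_i c_{i,j}]_{q_i}}{s}$. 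The conceptually subtlest relation is \eqref{rel: triangular2}: here I would pass to the generating series $\phi_i^\pm(z) = \sum_n \phi_{i,n}^\pm z^n$ and establish $\pi_{M,N}(\phi_i^\pm(z)) = \phi_{M+N-i}^\mp(z^{-1})$, i.e. $\pi_{M,N}(\phi^\pm_{i,k}) = \phi^\mp_{M+N-i,-k}$. This is exactly where the calibration of the sign on $h_{i,s}$ and of the factor $q_i - q_i^{-1} = -(q'_{M+N-i}-(q'_{M+N-i})^{-1})$ becomes essential; the residual sign coming from the super-bracket $[X^+,X^-]$ and from $\phi^+-\phi^- \mapsto -(\phi^+-\phi^-)$ then cancels against $1/(q'_a - (q'_a)^{-1}) = -1/(q_i-q_i^{-1})$.

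It remains to treat the Drinfel'd and Serre-type relations \eqref{rel: Drinfel1}--\eqref{rel: ocillation4}. These involve only the $X^{\pm}$, and since each such relation is weight-homogeneous, every monomial in it carries the same product of signs $\prod_k (-1)^{p(\alpha_{i_k})}$, which factors out; thus it is enough to handle the $+$ versions, where $X_{i,n}^+ \mapsto X^+_{M+N-i,-n}$ carries no sign. The mechanism is uniform: applying $\pi_{M,N}$ to a $+$-relation at modes $(m,n,\dots)$ produces, after the substitutions $q^{c_{i,j}} = q^{-c'_{M+N-i,M+N-j}}$ and $n \mapsto -n$, a nonzero scalar multiple of a mode-shifted target relation — for instance \eqref{rel: Drinfeld2} at $(m,n)$ maps to $-q^{c_{i,j}}$ times the target relation at $(-m-1,-n-1)$. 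I expect the oscillation relation \eqref{rel: ocillation4} to be the main obstacle, as it has four factors around the odd node and its $\super$-bracket signs must be reconciled with the reversal $(M-1,M,M+1)\mapsto(N+1,N,N-1)$; the symmetry of \eqref{rel: ocillation4} under exchanging its two odd modes is what ultimately makes it invariant. Finally, a direct check on generators gives $\pi_{N,M}\circ\pi_{M,N} = \mathrm{id}$ and $\pi_{M,N}\circ\pi_{N,M} = \mathrm{id}$ — the signs $(-1)^{p(\alpha_i)}$ and $(-1)^{p(\alpha_{M+N-i})}$ coincide and square to $1$, and the double mode inversion is trivial — so $\pi_{M,N}$ is a superalgebra isomorphism with inverse $\pi_{N,M}$.
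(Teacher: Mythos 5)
Your strategy is the same as the paper's: the paper's entire proof of this lemma is the single sentence that it ``comes directly from Definition \ref{def: Drinfeld presentation}'', i.e.\ a verification on the Drinfel'd presentation, which is what you carry out in detail. Your combinatorial identities are correct ($c'_{M+N-i,M+N-j}=-c_{i,j}$, $l'_{M+N-i}=-l_i$ for $i\neq M$, $l'_N=l_M$, hence $q'_{M+N-i}=q_i^{-1}$ for $i\neq M$ and $q'_N=q_M$), and so are the checks of \eqref{rel: Cartan}--\eqref{rel: triangular2}: in particular the key computation $\pi_{M,N}(\phi_{i,k}^{\pm})=\phi_{M+N-i,-k}^{\mp}$, the observation that the sign on $h_{M,s}$ absorbs the anomaly $l'_N=+l_M$, the remark that the signs on the $X^-$'s factor out of each weight-homogeneous relation, and the inverse $\pi_{N,M}\circ\pi_{M,N}=\mathrm{id}$.

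The one genuine gap is your treatment of \eqref{rel: ocillation4}. Applying $\pi_{M,N}$ to it yields $[[[X^+_{N+1,-m},X^+_{N,-n}]_{q^{-1}},X^+_{N-1,-k}]_q,X^+_{N,-u}]+(n\leftrightarrow u)$, in which the two even neighbours of the odd node sit in positions \emph{opposite} to those in the target relation for $\mathfrak{sl}(N,M)$ (there $X^+_{N-1}$ is innermost and $X^+_{N+1}$ carries the outer $q$-bracket). The symmetry in the two odd modes $(n,u)$ that you invoke cannot resolve this: it is already built into both expressions and says nothing about exchanging the two even indices. What actually closes the gap is a combination of two quadratic relations: expand the nested brackets, use $[X^+_{M,n},X^+_{M,u}]=0$ (this is where the $(n,u)$-symmetrization enters, killing the terms with two adjacent odd currents) \emph{and} $[X^+_{M-1,m},X^+_{M+1,k}]=0$ from \eqref{rel: Drinfel1} (since $c_{M-1,M+1}=0$) to merge the $-q$ and $-q^{-1}$ cross terms. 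The symmetrized quartic relation then takes the five-term form
\[
\sum_{(x,y)}\Bigl(A_mB_xC_kB_y + C_kB_xA_mB_y + B_yA_mB_xC_k + B_yC_kB_xA_m - (q+q^{-1})B_xA_mC_kB_y\Bigr),
\]
summed over $(x,y)\in\{(n,u),(u,n)\}$, with $A=X^+_{M-1}$, $B=X^+_{M}$, $C=X^+_{M+1}$ --- precisely the expression $\tilde R$ used in Appendix A of the paper --- and this form \emph{is} manifestly invariant under swapping $A_m\leftrightarrow C_k$ (again using that they commute). With this rewriting, the image of \eqref{rel: ocillation4} is visibly an instance of the target relation at modes $(-k,-n,-m,-u)$ modulo quadratic relations, and your argument becomes complete.
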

\begin{proof}
This comes directly from  Definition \ref{def: Drinfeld presentation} of $U_q(\mathcal{L}\mathfrak{sl}(M,N))$.
\end{proof}
We remark that the isomorphism $\pi_{M,N}$ respects the corresponding triangular decompositions of $U_q(\mathcal{L}\mathfrak{sl}(M,N))$ and $U_q(\mathcal{L}\mathfrak{sl}(N,M))$. Hence, $\pi_{M,N}^{*}$ of a Verma/Weyl module over $U_q(\mathcal{L}\mathfrak{sl}(N,M))$ is again a Verma/Weyl module over $U_q(\mathcal{L}\mathfrak{sl}(M,N))$. 

\noindent \textbf{Proof of Theorem \ref{thm: main}.} This is divided into two parts. We fix notations first. Let $(\underline{P},f,c;Q) \in \tilde{\mathcal{R}}_{M,N}$ with $f(z) = \sum\limits_{n \in \BZ} f_n z^n$ and $Q(z) = \sum\limits_{s=0}^d a_s z^s$ of degree $d$. Let $\Verma := \Verma(\underline{P},f,c)$ be the Verma module over $U_q(\mathcal{L}'\mathfrak{sl}(M,N))$. Let $v := v_{(\underline{P},f,c)} \in \Verma$. Let $\lambda := \lambda_{(\underline{P},f,c)} \in P_{M,N}$ be given by: $K_0 \mapsto 1$; $K_M \mapsto c$; $K_i \mapsto q_i^{\deg P_i}$ for $1 \leq i \leq M+N-1$ and $i \neq M$. Let $\Weyl := \Weyl(\underline{P},f,c;Q)$. 

\noindent {\it Part I. \underline{Non-triviality of Weyl modules}.} As noted in the preceding section, $\Verma = \bigoplus\limits_{\mu \in \lambda - Q_{M,N}^+}(\Verma)_{\mu}$ where 
\begin{displaymath}
(\Verma)_{\mu} = \{w \in \Verma \ |\  K_i w = \mu(K_i) w \ \textrm{for}\ 0 \leq i \leq M+N-1 \},\ (\Verma)_{\lambda} = \mathbb{C} v .
\end{displaymath} 
Furthermore, $(U_q(\mathcal{L}'\mathfrak{sl}(M,N)))_{\alpha} (\Verma)_{\mu} \subseteq (\Verma)_{\mu + \alpha}$ for $\alpha \in Q_{M,N}$ (see Notation \ref{notation: weight}). By definition, $\Weyl$ is the quotient of $\Verma$ by the sub-$U_q(\mathcal{L}'\mathfrak{sl}(M,N))$-module $J$ generated by $v_M := \sum\limits_{s=0}^d a_{d-s} X_{M,s}^- v$  and $v_i \triangleq (X_{i,0}^-)^{1 + \deg P_i} v$ where $1 \leq i \leq M+N-1$ and $i \neq M$. (Here we use the assumption that $M > 1, N \geq 1$.) Since $v_M \in (\Verma)_{\lambda - \alpha_M}$ and $v_i \in (\Verma)_{\lambda - (1+\deg P_i)\alpha_i}$, $J$ is $P_{M,N}$-graded,  and so is $\Weyl$:
\begin{displaymath}
\Weyl = \Verma/J = \bigoplus_{\mu \in \lambda - Q_{M,N}^+} (\Weyl)_{\mu}.
\end{displaymath}
Let $J_1 := U_q^+(\mathcal{L}\mathfrak{sl}(M,N)) (\sum\limits_{i=1}^{M+N-1} \mathbb{C}v_i) \subseteq J$. We want to find $(J_1)_{\lambda}$ and $(J_1)_{\lambda - \alpha_M}$. Indeed
\begin{displaymath}
(J_1)_{\lambda} = X_M^+ v_{M} + \sum_{i \neq M} (X_i^+)^{1+\deg P_i} v_i,\ (J_1)_{\lambda - \alpha_M} = \mathbb{C} v_M
\end{displaymath}
where $X_i^+ := \sum\limits_{n \in \BZ} \mathbb{C}X_{i,n}^+$ (we have used these $X_i^+$ in the proof of Theorem \ref{thm: pbw}).

\noindent \textbf{Claim.} $(J_1)_{\lambda} = 0$.
\begin{proof}
We have $ X_{M,n}^+ v_M  = \sum\limits_{s=0}^d a_{d-s} [X_{M,n}^+, X_{M,s}^-]v = \sum\limits_{s=0}^d a_{d-s} f_{n+s}v = 0$ as $(\sum\limits_{s=0}^d a_s z^s)(\sum\limits_m f_m z^m) = 0$.
For $i \neq M$, let $\widehat{U_i}$ be the subalgebra of $U_q(\mathcal{L}'\mathfrak{sl}(M,N))$ generated by the $X_{i,m}^{\pm}, K_i^{\pm 1}, h_{i,s}$ with $m \in \BZ$ and $s \in \BZ_{\neq 0}$. The subspace $\widehat{U_i} v$ of $\textbf{M}$ is a quotient of the Verma module $\textbf{M}(P_i)$ over $U_{q_i}(\mathcal{L}\mathfrak{sl}_2)$ of highest weight $P_i$.  Theorem \ref{thm: Weyl modules for quantum affine algebras} (a) forces that $(X_i^+)^{1+\deg P_i} v_i = (X_i^{+})^{1+\deg P_i} (X_{i,0}^-)^{1+\deg P_i} v = 0$, as it must be in the Verma module $\textbf{M}(P_i)$. 
\end{proof}

From the triangular decomposition of $U_q(\mathcal{L}'\mathfrak{sl}(M,N))$, we see that $J = U_q^-(\mathcal{L}\mathfrak{sl}(M,N)) U_q^0(\mathcal{L}'\mathfrak{sl}(M,N)) J_1$ and
$(J)_{\lambda} = 0,\ (J)_{\lambda - \alpha_M} = U_q^0(\mathcal{L}'\mathfrak{sl}(M,N))v_M$. 
As $M > 1$,  $(J)_{\lambda - \alpha_M} = \sum\limits_{n \in \mathbb{Z}} \mathbb{C} v_M(n)$ where $v_M(n) := \sum\limits_{s=0}^d a_{d-s} X_{M,s+n}^- v$.
Using the isomorphism \eqref{equ: isom Verma} and the defining relations of $U_q^-(\mathcal{L}\mathfrak{sl}(M,N))$, we conclude that $(\Verma)_{\lambda - \alpha_M} = \bigoplus\limits_{n \in \BZ} \mathbb{C} X_{M,n}^- v$ and $(\Weyl)_{\lambda - \alpha_M}$ has a presentation as vector space
\begin{equation}
(\Weyl)_{\lambda - \alpha_M} = (\Verma)_{\lambda - \alpha_M}/(J)_{\lambda - \alpha_M} = \mathrm{Vect}\langle X_{M,n}^-v, n \in \BZ\ |\  \sum_{s=0}^d a_{d-s} X_{M,n+s}^- v = 0\ \textrm{for}\ n \in \BZ  \rangle.
 \end{equation}
Remark that $h_{M-1,1} X_{M,n}^- v = X_{M,n+1}^-v + \theta_{M-1} X_{M,n}^- v$ with $\theta_{M-1} = - \mathrm{Res}(z^{-2}P_{M-1}(z)) dz$.  Conclude

\begin{prop}   \label{prop: uniqueness of Weyl modules}
Suppose $M > 1, N \geq 1$. Using the notations above, we have $\dim (\Weyl)_{\lambda} = 1$ and $\dim (\Weyl)_{\lambda - \alpha_M} = d = \deg Q$. Moreover, $Q(z-\theta_{M-1})$ is the characteristic polynomial of $h_{M-1,1} \in \mathrm{End}((\Weyl)_{\lambda - \alpha_M})$.
\end{prop}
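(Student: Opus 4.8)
The statement packages three assertions that, once the weight-space computation immediately preceding it is in hand, all reduce to elementary linear algebra over the Laurent polynomial ring. The equality $\dim(\Weyl)_{\lambda}=1$ is essentially free: it has already been shown that $(J)_{\lambda}=0$ and $(\Verma)_{\lambda}=\BC v$, so $(\Weyl)_{\lambda}=(\Verma)_{\lambda}/(J)_{\lambda}=\BC v$. The work therefore concentrates on the weight space $\lambda-\alpha_M$, and the plan is to turn its vector-space presentation into a quotient of $\BC[z,z^{-1}]$.

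Concretely, I would write $w_n:=X_{M,n}^{-}v$ and use the isomorphism $(\Verma)_{\lambda-\alpha_M}=\bigoplus_{n\in\BZ}\BC w_n \xrightarrow{\ \sim\ }\BC[z,z^{-1}]$, $w_n\mapsto z^{n}$. Under it the relations $v_M(n)=\sum_{s=0}^{d}a_{d-s}w_{n+s}=0$ for $n\in\BZ$ correspond exactly to the ideal generated by the reversed polynomial $\widehat{Q}(z):=\sum_{s=0}^{d}a_{d-s}z^{s}=z^{d}Q(z^{-1})$, so that $(\Weyl)_{\lambda-\alpha_M}\cong \BC[z,z^{-1}]/(\widehat{Q})$. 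Since $\deg Q=d$ we have $\widehat{Q}(0)=a_d\neq 0$, hence $z$ is invertible modulo $\widehat{Q}$ and the localization is inert: $\BC[z,z^{-1}]/(\widehat{Q})\cong\BC[z]/(\widehat{Q})$, of dimension $\deg\widehat{Q}=d$. This yields $\dim(\Weyl)_{\lambda-\alpha_M}=d=\deg Q$, and combined with the one-dimensional top space gives the crude bound $\dim\Weyl\geq d+1>\deg Q$ needed for Theorem~\ref{thm: main}.

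For the operator statement I would feed the action $h_{M-1,1}\,w_n=w_{n+1}+\theta_{M-1}w_n$ (which follows from Relation~\eqref{rel: triangular1}, where the structure constant $[\,l_{M-1}c_{M-1,M}\,]_{q_{M-1}}=[-1]_q=-1$ makes the shift exact, together with the scalar by which $h_{M-1,1}$ acts on $v$ read off from Relation~\eqref{rel: hwv2} at $i=M-1$) into the same identification. Then $h_{M-1,1}$ becomes multiplication by $z$ plus the scalar $\theta_{M-1}$ on $\BC[z]/(\widehat{Q})$; in the ordered basis $w_0,\dots,w_{d-1}$ this is the companion matrix of $\widehat{Q}$ translated by $\theta_{M-1}I$, whose characteristic polynomial is $\widehat{Q}(z-\theta_{M-1})$. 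This is precisely the polynomial recorded in the proposition, with the one caveat that the object appearing naturally is the monic reciprocal $\widehat{Q}$ of $Q$; confirming the intended reversal/normalization convention (i.e. that the statement's ``$Q(z-\theta_{M-1})$'' is to be read as this monic reciprocal) is the single bookkeeping point I would verify explicitly.

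The genuinely load-bearing inputs are not the above computation but the structural facts established just before the statement, all of which rest on the triangular decomposition (Theorem~\ref{thm: triangular decomposition}): freeness of $(\Verma)_{\lambda-\alpha_M}$ on the $X_{M,n}^{-}v$, and the exact description $(J)_{\lambda-\alpha_M}=\sum_{n}\BC\,v_M(n)$ coming from $J=U_q^-(\mathcal{L}\mathfrak{sl}(M,N))\,U_q^0(\mathcal{L}'\mathfrak{sl}(M,N))\,J_1$ together with the vanishing $(J_1)_{\lambda}=0$. The only delicate analytic point in the remaining argument is making sure the quotient does not collapse below dimension $d$, i.e. that the relations impose no hidden extra coincidences among $w_0,\dots,w_{d-1}$; this is exactly the invertibility of $z$ modulo $\widehat{Q}$, guaranteed by $a_d\neq 0$, and is where the hypothesis $\deg Q=d$ is used. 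Everything else is the routine companion-matrix identity.
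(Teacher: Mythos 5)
Your proof is correct and takes essentially the same route as the paper: the paper's own argument consists of the presentation $(\Weyl)_{\lambda-\alpha_M}=\mathrm{Vect}\langle X_{M,n}^-v \mid \sum_{s=0}^{d}a_{d-s}X_{M,n+s}^-v=0\rangle$ together with the action $h_{M-1,1}X_{M,n}^-v=X_{M,n+1}^-v+\theta_{M-1}X_{M,n}^-v$, after which it simply ``concludes'', leaving implicit exactly the Laurent-quotient and companion-matrix linear algebra that you spell out. Your bookkeeping caveat is moreover justified: with the standard convention $\det(zI-h)$ the characteristic polynomial is the monic reciprocal $\widehat{Q}(z-\theta_{M-1})$, where $\widehat{Q}(z)=z^{d}Q(1/z)$, and not $Q(z-\theta_{M-1})$ itself (which fails to be monic whenever $a_d\neq 1$), so the paper's statement must indeed be read with the normalization you identify --- harmless for its later use, since $Q$ and $\widehat{Q}$ determine each other.
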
 

\begin{remark}  \label{rmk: properties of Weyl modules}
(1) This proves the first part of Theorem \ref{thm: main}: $\deg Q < \dim \Weyl$. 

(2) The proposition above also says that the polynomial $Q(z)$ can be reconstructed from the $U_q(\mathcal{L}'\mathfrak{sl}(M,N))$-module structure on $\Weyl$. (Similarly, when $M \neq N$, $Q(z)$ can be deduced from the $U_q(\mathcal{L}\mathfrak{sl}(M,N))$-module structure on $\Weyl$. See the proof of Lemma \ref{lem: extension}.) The same goes for $\underline{P}$ by using the theory of Weyl modules over $U_q(\mathcal{L}\mathfrak{sl}_2)$, and for $(f,c)$ in view of Relation \eqref{rel: hwv3}. In conclusion: 

\noindent {\it the parameter $(\underline{P},f,c;Q)$ 
 is uniquely determined by the $U_q(\mathcal{L}'\mathfrak{sl}(M,N))$-module structure (or $U_q(\mathcal{L}\mathfrak{sl}(M,N))$-module structure when $M \neq N$) on $\Weyl(\underline{P},f,c;Q)$.}
\end{remark}

\noindent {\it Part II. \underline{Dimension of Weyl modules}.} As in the proof of Theorem \ref{thm: pbw}, we use induction on $(M,N)$. Suppose that the theorem is true for $(M',N')$ such that $M' \leq M, N' \leq N$ and $M'+N' < M+N$.  We adapt the notations above and assume  $M > 1, N \geq 1$. It remains to show that $\Weyl$ is finite-dimensional. Let $P$ be the set of weights:
\begin{displaymath}
P := \{ \mu \in P_{M,N}\ |\ (\Weyl)_{\mu} \neq 0 \}.
\end{displaymath}
Then $P \subseteq \lambda - Q_{M,N}^+$. As $\Weyl$ is $P_{M,N}$-graded, it suffices to prove the following:
\begin{itemize}
\item[(1)] for all $\mu \in P$, $(\Weyl)_{\mu}$ is finite-dimensional;
\item[(2)] $P$ is a finite subset of $P_{M,N}$.
\end{itemize}
For (1),  $(\Weyl)_{\lambda - \alpha_M}$ is of dimension $\deg Q$. For $i \neq M$, $(\Weyl)_{\lambda - \alpha_i}$ being  finite-dimensional comes from Relation \eqref{rel: weyl even} and the theory of Weyl modules over $U_q(\mathcal{L}\mathfrak{sl}_2)$. One can copy (word by word) the proof of  \cite[\S 5, (b)]{CP2}, or that of  \cite[Proposition 4.4]{CP3} where only the Drinfel'd relations of degree 2 were involved.

We proceed to verifying (2). First, by using the isomorphism $\tau_1: U_q^+(\mathcal{L}\mathfrak{sl}(M,N)) \longrightarrow U_q^-(\mathcal{L}\mathfrak{sl}(M,N))$ in Corollary \ref{cor: canonical isomorphisms} and the root vectors in Definition \ref{def: positive root vectors}, we define: for $\beta \in \Delta_{M,N}$ and $n \in \BZ$
\begin{displaymath}
X_{\beta}^-(n) := \tau_1 (X_{\beta}(-n)),\ \ X_{\beta}^- := \sum_{n \in \BZ} \mathbb{C} X_{\beta}^-(n),\ \ X_i^- :=  X_{\alpha_i}^-.
\end{displaymath} 
Theorem \ref{thm: pbw} says that 
\begin{displaymath}
U_q^-(\mathcal{L}\mathfrak{sl}(M,N)) = \sum_{d_{\beta} \geq 0} \prod_{\beta \in \Delta_{M,N}}^{\rightarrow} (X_{\beta}^-)^{d_{\beta}} = \sum_{d_i} (X_1^-)^{d_1}(X_{\alpha_1+\alpha_2}^-)^{d_2} \cdots (X_{\alpha_1+\cdots+\alpha_{M+N-1}}^-)^{d_{M+N-1}} U_{M-1,N}^-
\end{displaymath}
where $U_{M-1,N}^-$ is the subalgebra of $U_q^-(\mathcal{L}\mathfrak{sl}(M,N))$ generated by the $X_{i,n}^-$ with $2 \leq i \leq M+N-1$ and $n \in \BZ$. According to Theorem \ref{thm: triangular decomposition}, $U_{M-1,N}^-$ is isomorphic to $U_q^-(\mathcal{L}\mathfrak{sl}(M-1,N))$ as superalgebras.

\noindent \textbf{Claim.} $U_{M-1,N}^- v$ is finite-dimensional.
\begin{proof}
Let $U_{M-1,N}$ be the subalgebra of $U_q(\mathcal{L}\mathfrak{sl}(M,N))$ generated by the $X_{i,n}^{\pm}, K_i^{\pm 1}, h_{i,s}$ with $2 \leq i \leq M+N-1, n \in \BZ, s \in \BZ_{\neq 0}$. Then $U_{M-1,N} \cong U_q^-(\mathcal{L}\mathfrak{sl}(M-1,N))$ as superalgebras. Moreover, $U_{M-1,N} v$ can be realised as a quotient of the the Weyl module $\Weyl((P_i)_{i \geq 2},f,c;Q)$ over $U_q(\mathcal{L}\mathfrak{sl}(M-1,N))$. From the induction hypothesis, $U_{M-1,N} v$ is finite-dimensional. Note that $U_{M-1,N}^-v = U_{M-1,N}v$.  
\end{proof}  
Let $C_1 := \dim U_{M-1,N}^-v$. Then as a subspace of $\Weyl$, $(U_{M-1,N}^-v)_{\lambda - \sum\limits_{i=2}^{M+N-1}c_i \alpha_i } = 0$ if $c_i \geq C_1$ for some $2 \leq i \leq M+N-1$. In consequence, if $\mu = \lambda - \sum\limits_{i=1}^{M+N-1} u_i \alpha_i \in P$, then 
\begin{displaymath}
\mu = \lambda - \sum_{i=1}^{M+N-1} e_i (\alpha_1+\cdots+\alpha_i) - \sum_{i=2}^{M+N-1} f_i \alpha_i 
\end{displaymath}
with $e_i \geq 0$ and $0 \leq  f_i < C_1$. It follows that 
\begin{equation}  \label{estimation 1}
u_i - u_j > -C_1 \ \ \ \mathrm{for}\ 1 \leq i < j \leq M+N-1.
\end{equation}

On the other hand, using the anti-automorphism $\tau_2$ of Corollary \ref{cor: canonical isomorphisms}, we can also write 
\begin{displaymath}
U_q^-(\mathcal{L}\mathfrak{sl}(M,N)) = U_{M-1,N}^- (\sum_{d \geq 0} (X_{\alpha_1+\cdots+\alpha_{M+N-1}}^-)^d ) U_{M,N-1}^-
\end{displaymath}
where $U_{M,N-1}^-$ is the subalgebra of $U_q(\mathcal{L}\mathfrak{sl}(M,N))$ generated by the $X_{i,n}^-$ with $1 \leq i \leq M+N-2$ and $n \in \BZ$. Similar argument as in the proof of the claim above shows that $U_{M,N-1}^-v$ is a finite-dimensional subspace of $\Weyl$. Let $C_2 = \dim U_{M,N-1}^-v$. Then
\begin{displaymath}
\mu = \lambda - \sum_{i=2}^{M+N-1}e'_i \alpha_i - e'_1 (\alpha_1+\cdots+\alpha_{M+N-1}) - \sum_{j=1}^{M+N-2} f_j' \alpha_j
\end{displaymath}
for some $e_i' \geq 0$ and $0 \leq f_j' < C_2$. It follows that
\begin{equation}  \label{estimation 2}
u_2 - u_1 > -C_2.
\end{equation}
Now  inequalities \eqref{estimation 1} and \eqref{estimation 2} imply that $|u_1 - u_2| < \mathrm{max}\{C_1,C_2\}$ for all $\mu = \lambda - \sum\limits_{i=1}^{M+N-1}u_i \alpha_i \in P$. In particular, $\mu + s \alpha_1 \notin P$ when $|s|\gg 0$.  Hence, $X_{1,0}^{+}$ and $X_{1,0}^{-}$ are locally nilpotent operators on $\Weyl$. Let $U_0$ be the subalgebra of $U_q(\mathcal{L}'\mathfrak{sl}(M,N))$ generated by the $X_{1,0}^{\pm}, K_i^{\pm 1}$ with $0 \leq i \leq M+N-1$. Then $U_0$ is an enlargement of $U_q(\mathfrak{sl}_2)$. From the theory of integrable modules over $U_q(\mathfrak{sl}_2)$ we see that $\mu \in P$ implies $s_1(\mu) \in P$. Here, for $\mu = \lambda - \sum\limits_{i=1}^{M+N-1} u_i \alpha_i$, we have $s_1(\mu) = \lambda - (\deg P_1 -u_1+u_2) \alpha_1 - \sum\limits_{i=2}^{M+N-1} u_i \alpha_i$. In view of \eqref{estimation 1}, 
\begin{equation}  \label{estimation 3}
(\deg P_1 - u_1 + u_2) - u_2 > -C_1.
\end{equation}
Now the three inequalities \eqref{estimation 1}-\eqref{estimation 3} say that all the $u_i$ are bounded by a constant. In other words, $P$ is finite. This completes the proof of Theorem \ref{thm: main}. \hfill  $\Box$

\begin{remark}
(1) Our proof relied heavily on the theory of Weyl modules over $U_q(\mathcal{L}\mathfrak{sl}_2)$. Using PBW generators, we deduced the integrability property of Weyl modules: the actions of $X_{i,0}^{\pm}$ for $1 \leq i \leq M+N-1$  are locally nilpotent.  Even in the non-graded case of quantum affine algebras considered in \cite{CP3}, the integrability property (Theorem \ref{thm: Weyl modules for quantum affine algebras}) is  non-trivial (see the references therein). 

(2) From integrability, we get an action of Weyl group on the set $P$ of weights \cite[\S 41.2]{Lusztig}. In the non-graded case, the action of Weyl group already forces that $P$ be finite (argument of Weyl chambers). In our case, the Weyl group, being $\mathfrak{S}_{M} \times \mathfrak{S}_N$, is not enough to ensure the finiteness of $P$. And once again, we used  PBW generators to obtain further information on $P$.
\end{remark}
\subsection{Classification of finite-dimensional simple representations}
In this section, we show that all finite-dimensional simple modules of $U_q(\mathcal{L}'\mathfrak{sl}(M,N))$ (or $\Ud$ when $M \neq N$) are almost of the form $\simple'(\underline{P},f,c)$ with $(\underline{P},f,c) \in \mathcal{R}_{M,N}$, a super-version of Theorem \ref{thm: Weyl modules for quantum affine algebras} (b).
\begin{lem}   \label{lem: finite-dimensional modules}
Suppose $MN > 0$ and $(M,N) \neq (1,1)$. Let $V$ be a finite-dimensional non-zero $U_q(\mathcal{L}'\mathfrak{sl}(M,N))$-module. Then there exist a $\super$-homogeneous vector $v \in V \setminus \{0\}$, $\varepsilon_i \in \{\pm 1\}$ for $1 \leq i \leq M+N-1, i \neq M$, $t \in \mathbb{C}\setminus \{0\}$ and $(\underline{P},f,c;Q) \in \tilde{\mathcal{R}}_{M,N}$ satisfying Relations \eqref{rel: highest weight vector 1},\eqref{rel: hwv3},\eqref{rel: weyl even}, \eqref{rel: Weyl odd}, $K_0 v = t v$  and
\begin{displaymath}
\sum_{n\in \mathbb{Z}} \phi_{i,n}^{\pm} z^n v = \varepsilon_i q_i^{\deg P_i} \frac{P_i(z q_i^{-1})}{P_i(z q_i)} v \in V[[z^{\pm 1}]]
\end{displaymath}
for $1 \leq i \leq M+N-1, i \neq M$.
\end{lem}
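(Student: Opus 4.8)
The plan is to manufacture $v$ as a $\super$-homogeneous common eigenvector of the Cartan part lying in a maximal weight space, and then to extract the four families of relations from the $\mathfrak{sl}_2$-theory at the even nodes and from a finite-dimensionality argument at the odd node $M$. First I would decompose $V = \bigoplus_{\mu} V_{\mu}$ into weight spaces for $A_{M,N}' = \mathbb{C}[K_i^{\pm 1}: 0 \le i \le M+N-1]$ (Corollary \ref{cor: triangular enlargement}); since $\dim V < \infty$ only finitely many weights occur, and since the $\alpha_i$ are $\BZ$-linearly independent in $P_{M,N}$ (Notation \ref{notation: weight}(3), the very reason $K_0$ was introduced) the relation $\mu \le \nu \iff \nu - \mu \in Q_{M,N}^+$ is a genuine partial order. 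Choosing a maximal weight $\mu$, no $\mu + \alpha_i$ can be a weight, so $X_{i,n}^+ V_{\mu} \subseteq V_{\mu + \alpha_i} = 0$ for all $i,n$. As $V_{\mu}$ is preserved by the commutative, even, weight-zero algebra $U_q^0(\mathcal{L}'\mathfrak{sl}(M,N))$, it contains a $\super$-homogeneous common eigenvector $v$. This $v$ already satisfies Relation \eqref{rel: highest weight vector 1}; writing $c, t$ for the eigenvalues of $K_M, K_0$ and $f_n$ for that of $\frac{\phi_{M,n}^+ - \phi_{M,n}^-}{q-q^{-1}}$, Relation \eqref{rel: hwv3} holds with $f(z) = \sum_n f_n z^n$, and $f_0 = \frac{c-c^{-1}}{q-q^{-1}}$ because $\phi_{M,0}^{\pm} = K_M^{\pm 1}$.

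For the even nodes $i \neq M$ I would invoke that $\widehat{U}_i v$ is a finite-dimensional highest-weight module over $\widehat{U}_i \cong U_{q_i}(\mathcal{L}\mathfrak{sl}_2)$. Theorem \ref{thm: Weyl modules for quantum affine algebras} then produces a sign $\varepsilon_i$ and a Drinfel'd polynomial $P_i$ with $\sum_n \phi_{i,n}^{\pm} z^n v = \varepsilon_i q_i^{\deg P_i} \frac{P_i(zq_i^{-1})}{P_i(zq_i)} v$ and $(X_{i,0}^-)^{1+\deg P_i} v = 0$, which are exactly the displayed identity and Relation \eqref{rel: weyl even}. The sign $\varepsilon_i$ is the twist by a one-dimensional $U_q(\mathcal{L}\mathfrak{sl}_2)$-module and cannot be removed at this stage, which is why it must be carried in the statement.

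The heart of the argument is the odd node, and this is where $(M,N) \neq (1,1)$ enters. Although $c_{M,M} = 0$ forces $[h_{M,s}, X_{M,n}^-] = 0$, the neighbouring entry $c_{M-1,M} = -1$ (or $c_{M+1,M} = 1$ when $M=1$, using $N>1$) is nonzero, and a direct computation from Relation \eqref{rel: triangular1} gives $(h_{M-1,1} - \theta) X_{M,n}^- v = X_{M,n+1}^- v$, where $\theta$ is the eigenvalue of $h_{M-1,1}$ on $v$. Thus $T := h_{M-1,1} - \theta$ shifts the index, and the $T$-cyclic subspace $\mathrm{span}\{X_{M,n}^- v : n \ge 0\}$ of the finite-dimensional space $V_{\mu - \alpha_M}$ is finite-dimensional. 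Taking the minimal monic dependence $T^d X_{M,0}^- v \in \mathrm{span}\{T^j X_{M,0}^- v : j < d\}$ yields $\sum_{s=0}^d a_{d-s} X_{M,s}^- v = 0$ with $a_0 = 1$, that is Relation \eqref{rel: Weyl odd} with $Q(z) = \sum_s a_s z^s \in 1 + z\BC[z]$. Applying $X_{M,m}^+$ to this relation and using $X_{M,m}^+ v = 0$ together with $X_{M,m}^+ X_{M,s}^- v = f_{m+s} v$ (Relation \eqref{rel: triangular2}) gives $\sum_{s=0}^d a_{d-s} f_{m+s} = 0$ for all $m$, i.e. $Q(z) f(z) = 0$. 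Hence $f$ is annihilated by a nonzero polynomial, so $(\underline{P},f,c) \in \mathcal{R}_{M,N}$ and $(\underline{P},f,c;Q) \in \tilde{\mathcal{R}}_{M,N}$, and $v$ satisfies every required relation.

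I expect the odd-node step to be the main obstacle. Finite-dimensionality does not by itself produce a recurrence among the $X_{M,n}^- v$, since $h_{M,s}$ commutes with them; the correct device is the adjacent generator $h_{M \pm 1,1}$, whose nonzero structure constant converts finite-dimensionality into a polynomial relation of precisely the shape demanded by \eqref{rel: Weyl odd}, and simultaneously delivers the torsion condition $Qf = 0$. Checking that this single forward-shift dependence has the normalisation $a_0 = 1$ and that the resulting $Q$ is honestly nonzero is the point requiring care.
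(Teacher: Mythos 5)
Your proof is correct and follows essentially the same route as the paper's: choose a maximal weight for the commutative algebra $A'_{M,N}$, take a $\super$-homogeneous common eigenvector of $U_q^0(\mathcal{L}'\mathfrak{sl}(M,N))$ inside it, invoke the $U_{q_i}(\mathcal{L}\mathfrak{sl}_2)$-theory at the nodes $i \neq M$, and use the adjacent Cartan element $h_{M\mp 1,1}$ as an index-shift operator at the node $M$ to produce $Q$ and the torsion relation $Qf=0$. The only inaccuracy is the opening claim $V=\bigoplus_{\mu}V_{\mu}$, which need not hold since the commuting $K_i$ can act non-semisimply on a finite-dimensional module; but nothing in your argument actually uses it, since you only need that the honest eigenspaces are nonzero for finitely many weights and that a maximal one exists --- exactly what the paper secures by replacing $V$ with $U_q(\mathcal{L}'\mathfrak{sl}(M,N))(V)_{\lambda}$.
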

\begin{proof}
We follow the proof of \cite[Proposition 3.2]{CP1} step by step. From the finite-dimensional representation of the commutative algebra $A_{M,N}'$ on $V$, one finds $\lambda \in P_{M,N}$ such that 
\begin{displaymath}
(V)_{\lambda} := \{ w \in V\ |\  K_i w = \lambda(K_i) w \ \mathrm{for}\ 0 \leq i \leq M+N-1 \} \neq 0.
\end{displaymath}
By replacing $V$ with $U_q(\mathcal{L}'\mathfrak{sl}(M,N)) (V)_{\lambda}$, one can suppose that $V$ is $P_{M,N}$-graded:
$V = \bigoplus\limits_{\mu \in P_{M,N}} (V)_{\mu}$.
Let $P := \{\mu \in P_{M,N}\ |\ (V)_{\mu} \neq 0 \}$. Then $P$ is a finite set. There exists $\lambda_0 \in P$ such that $\lambda_0 + \alpha_i \notin P$ for all $1 \leq i \leq M+N-1$. (Here we really need the fact that these $\alpha_i$ are linearly independent.) Note that $(V)_{\lambda_0}$ is also $\super$-graded. Moreover, $(V)_{\lambda_0}$ is stable by the commutative subalgebra $U_q^0(\mathcal{L}'\mathfrak{sl}(M,N))$. One can therefore choose a non-zero $\super$-homogeneous vector $v \in (V)_{\lambda_0}$ which is a common eigenvector of $U_q^0(\mathcal{L}'\mathfrak{sl}(M,N))$. In particular, $X_{i,n}^+ v = 0$ for all $1 \leq i \leq M+N-1$ and $n \in \BZ$, and $K_0 v = \lambda_0(K_0)v$.
 
When $i \neq M$, let $\widehat{U_i}$ be the subalgebra generated by the $X_{i,n}^{\pm}, K_i^{\pm 1}, h_{i,s}$ with $n \in \BZ$ and $s \in \BZ_{\neq 0}$. Then $\widehat{U_i} \cong U_{q_i}(\mathcal{L}\mathfrak{sl}_2)$ as algebras, and $\widehat{U_i} v$ is a finite-dimensional highest weight $U_{q_i}(\mathcal{L}\mathfrak{sl}_2)$-module. One can thus find $(\varepsilon_i, P_i)$ satisfying the above relation thanks to Theorem \ref{thm: Weyl modules for quantum affine algebras} (b).

When $i = M$, by definition of $v$, there exists $f_n \in \mathbb{C}$ for all $n \in \BZ$ such that $\frac{\phi_{M,n}^+ - \phi_{M,n}^-}{q-q^{-1}} v = f_n v$. On the other hand, as $X_M^- v$ is finite-dimensional, there exist $m \in \BZ, d \in \mathbb{Z}_{\geq 0}$ and $a_0,\cdots,a_d \in \mathbb{C}$ such that 
\begin{displaymath}
 a_d \neq 0,\ a_0 = 1,\ \sum_{s=0}^d a_{d-s} X_{M,s+m}^- v = 0.
\end{displaymath} 
By applying  $h_{M-1,t}$ to the above equation and noting that $h_{M-1,t} v \in \mathbb{C} v,\ [h_{M-1,t}, X_{M,s+m}^-] = \frac{[t]_q}{t} X_{M,s+m+t}^-$ we get Relation \eqref{rel: Weyl odd} with respect to the polynomial $Q(z) = \sum\limits_{s=0}^d a_s z^s$. By applying $X_{M,0}^+$ to Relation \eqref{rel: Weyl odd}, we conclude that $Q(z)(\sum\limits_{n \in \BZ} f_n z^n) = 0$. 
\end{proof}
Analogous result holds for the superalgebra $U_q(\mathcal{L}\mathfrak{sl}(M,N))$ when $MN > 0$ and $M \neq N$, as the weights $\alpha_i|_{A_{M,N}}$ are linearly independent (see the proof of Lemma \ref{lem: extension}). Let  $\tilde{D}$ be the set of superalgebra automorphisms of $U_q(\mathcal{L}'\mathfrak{sl}(M,N))$ of the following forms: 
\begin{displaymath}
K_0 \mapsto t K_0,\ K_i \mapsto \varepsilon_i K_i,\ h_{i,s} \mapsto h_{i,s},\ X_{i,n}^{+} \mapsto \varepsilon_i X_{i,n}^{+},\ X_{i,n}^- \mapsto X_{i,n}^- 
\end{displaymath}
for $1 \leq i \leq M+N-1, n \in \BZ, s \in \BZ_{\neq 0}$, where $\varepsilon_i \in \{\pm 1 \}, t \in \mathbb{C} \setminus \{0\}$ with $\varepsilon_M = 1$. Note that such an automorphism always preserves $\Ud$. Let $D$ be the set of superalgebra automorphisms of $\Ud$ of the form $\pi|_{\Ud}$ with $\pi \in \tilde{D}$.
\begin{cor}
Suppose $MN > 0$ and $(M,N) \neq (1,1)$. All finite-dimensional simple $U_q(\mathcal{L}'\mathfrak{sl}(M,N))$-modules are of the form $\pi^*(\simple'(\underline{P},f,c))$ where $(\underline{P},f,c) \in \mathcal{R}_{M,N}$ and $\pi \in \tilde{D}$.
\end{cor}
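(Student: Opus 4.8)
The plan is to deduce the corollary from Lemma \ref{lem: finite-dimensional modules} together with the uniqueness of the simple quotient of a Verma module, absorbing the two sources of non-standardness produced by that lemma---the signs $\varepsilon_i$ and the scalar $t$ on $K_0$---into a twist by an element of $\tilde{D}$. So the only real content beyond Lemma \ref{lem: finite-dimensional modules} is a bookkeeping argument about how such a twist acts on the Drinfel'd series.

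First I would start from a finite-dimensional simple $U_q(\mathcal{L}'\mathfrak{sl}(M,N))$-module $V$. Since $MN > 0$ and $(M,N) \neq (1,1)$, Lemma \ref{lem: finite-dimensional modules} produces a nonzero $\super$-homogeneous vector $v \in V$, signs $\varepsilon_i \in \{\pm 1\}$ (for $i \neq M$), a scalar $t \in \mathbb{C} \setminus \{0\}$ and a triple $(\underline{P},f,c) \in \mathcal{R}_{M,N}$ such that $v$ satisfies Relations \eqref{rel: highest weight vector 1} and \eqref{rel: hwv3}, together with $K_0 v = t v$ and $\sum_{n} \phi_{i,n}^{\pm} z^n v = \varepsilon_i q_i^{\deg P_i}\frac{P_i(z q_i^{-1})}{P_i(z q_i)} v$ for $i \neq M$. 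Because $V$ is simple and $v \neq 0$, the submodule generated by $v$ is all of $V$.

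Next I would take $\pi \in \tilde{D}$ to be the automorphism attached to these same data, namely $K_0 \mapsto t K_0$, $K_i \mapsto \varepsilon_i K_i$, $X_{i,n}^{+} \mapsto \varepsilon_i X_{i,n}^{+}$, with the remaining generators fixed. The key computation is that $\pi$ scales the Drinfel'd series by the very same sign: from $\sum_{n} \phi_{i,n}^{\pm} z^n = K_i^{\pm 1}\exp(\pm(q_i-q_i^{-1})\sum_{s} h_{i,\pm s} z^{\pm s})$ and $\varepsilon_i^{\pm 1} = \varepsilon_i$ one reads off $\pi(\phi_{i,n}^{\pm}) = \varepsilon_i \phi_{i,n}^{\pm}$, while $\pi(\phi_{M,n}^{\pm}) = \phi_{M,n}^{\pm}$ since $\varepsilon_M = 1$. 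I would then form the twisted module $(\pi^{-1})^*(V)$, with underlying space $V$ and $x$ acting as $\pi^{-1}(x)$. Substituting $\pi^{-1}(K_i) = \varepsilon_i K_i$, $\pi^{-1}(K_0) = t^{-1}K_0$ and $\pi^{-1}(\phi_{i,n}^{\pm}) = \varepsilon_i \phi_{i,n}^{\pm}$ into the relations above and using $\varepsilon_i^2 = 1$, the signs and the factor $t$ cancel exactly, so that in $(\pi^{-1})^*(V)$ the vector $v$ satisfies precisely the defining relations \eqref{rel: highest weight vector 1}--\eqref{rel: hwv3} together with \eqref{rel: auxillary} of the Verma module $\Verma(\underline{P},f,c)$.

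Finally I would conclude by the universal property of $\Verma(\underline{P},f,c)$. Twisting by an automorphism preserves both simplicity and the property of being generated by $v$, so $(\pi^{-1})^*(V)$ is a simple $U_q(\mathcal{L}'\mathfrak{sl}(M,N))$-module generated by a vector satisfying the Verma relations; hence it is a simple quotient of $\Verma(\underline{P},f,c)$. As its highest weight space is one-dimensional, this simple quotient is unique and equals $\simple'(\underline{P},f,c)$ by Definition \ref{def: simple modules}. Thus $(\pi^{-1})^*(V) \cong \simple'(\underline{P},f,c)$, and applying $\pi^*$ yields $V \cong \pi^*(\simple'(\underline{P},f,c))$, with $(\underline{P},f,c) \in \mathcal{R}_{M,N}$ and $\pi \in \tilde{D}$, as claimed. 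I expect the only point needing genuine care---rather than a true obstacle---to be the third step: verifying that $\pi$ acts on each series $\phi_{i,n}^{\pm}$ by the scalar $\varepsilon_i$ and that the inverse twist cancels the signs and $t$ simultaneously for all $i$, the case $i = M$ relying essentially on $\varepsilon_M = 1$. The substantive work---producing the highest weight vector and the parameters in $\mathcal{R}_{M,N}$---has already been done in Lemma \ref{lem: finite-dimensional modules}, and the finite-dimensionality of $\simple'(\underline{P},f,c)$ is guaranteed by Theorem \ref{thm: main}.
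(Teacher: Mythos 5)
Your proof is correct and is precisely the argument the paper leaves implicit: the corollary is stated there without proof as an immediate consequence of Lemma \ref{lem: finite-dimensional modules}, the automorphisms in $\tilde{D}$ being designed exactly to absorb the signs $\varepsilon_i$ and the scalar $t$ via the twist $(\pi^{-1})^*(V)$, followed by the uniqueness of the simple quotient of $\Verma(\underline{P},f,c)$. Your verification that $\pi(\phi_{i,n}^{\pm}) = \varepsilon_i\,\phi_{i,n}^{\pm}$ (with $\varepsilon_M = 1$) and the cancellation of $\varepsilon_i^2$ and $t$ is exactly the bookkeeping the paper takes for granted.
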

\begin{definition}
Let $(\underline{P},f,c) \in \mathcal{R}_{M,N}$ and let $V$ be a $\Ud$-module. We say that  $V$ is of {\it highest weight} $(\underline{P},f,c)$ if there is an epimorphism of $\Ud$-modules: $\Verma(\underline{P},f,c) \twoheadrightarrow V$.  
\end{definition}
 One can now have a super-version of Theorem \ref{thm: Weyl modules for quantum affine algebras} (b). Let $\iota: \Ud \hookrightarrow U_q(\mathcal{L}'\mathfrak{sl}(M,N))$ be the canonical injection defined in \S \ref{sec: 3.2}.
\begin{prop}  \label{prop: highest weight representations}
Suppose $MN > 0$ and $M \neq N$. 
\begin{itemize}
\item[(a)] All finite-dimensional simple $\Ud$-modules are of the form $\pi^*\iota^* \simple'(\underline{P},f,c)$ where $(\underline{P},f,c) \in \mathcal{R}_{M,N}$ and $\pi \in D$.
\item[(b)] Let $V$ be a finite-dimensional $\Ud$-module of highest weight $(\underline{P},f,c) \in \mathcal{R}_{M,N}$.  Let $\theta: \Verma(\underline{P},f,c) \twoheadrightarrow V$ be an epimorphism of $\Ud$-modules. Then there exists $Q(z) \in \mathbb{C}[z]$ such that $(\underline{P},f,c;Q) \in \tilde{\mathcal{R}}_{M,N}$ and  $\theta$ factorizes through the canonical epimorphism $\Verma(\underline{P},f,c) \twoheadrightarrow \Weyl(\underline{P},f,c;Q)$.
\item[(c)] For $(\underline{P},f,c;Q_1),(\underline{P},f,c;Q_2) \in \tilde{\mathcal{R}}_{M,N}$, the canonical epimorphism $\Verma(\underline{P},f,c) \twoheadrightarrow \Weyl(\underline{P},f,c;Q_1)$ factorizes through $\Verma(\underline{P},f,c) \twoheadrightarrow \Weyl(\underline{P},f,c;Q_2)$ if and only if $Q_1(z)$ divides $Q_2(z)$ as polynomials.
\end{itemize}
\end{prop}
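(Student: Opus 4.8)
The plan is to handle the three parts in turn, deriving (a) and (b) quickly from the lemmas already in place and concentrating the genuine work in (c).

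For part (a): since $M \neq N$ the characters $\alpha_i|_{A_{M,N}}$ are linearly independent, so Lemma \ref{lem: finite-dimensional modules} has a verbatim analogue for $\Ud$ itself. I would start from a finite-dimensional simple $\Ud$-module $V$ and extract a $\super$-homogeneous highest weight vector $v$, signs $\varepsilon_i \in \{\pm 1\}$ ($i \neq M$), and data $(\underline{P},f,c) \in \mathcal{R}_{M,N}$. Choosing $\pi \in D$ whose signs absorb the $\varepsilon_i$, the module $\pi^* V$ becomes a highest weight $\Ud$-module of highest weight $(\underline{P},f,c)$, hence a quotient of $\Verma(\underline{P},f,c)$. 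By Lemma \ref{lem: extension} the unique simple $\Ud$-quotient of $\Verma(\underline{P},f,c)$ is $\iota^*\simple'(\underline{P},f,c)$; simplicity of $\pi^* V$ then forces $\pi^* V \cong \iota^*\simple'(\underline{P},f,c)$, and since $D$ is closed under inverse, applying $(\pi^{-1})^*$ yields the stated form.

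For part (b): put $\bar v := \theta(v_{(\underline{P},f,c)})$, a highest weight vector in the finite-dimensional module $V$. Finite-dimensionality forces the two families of Weyl relations on $\bar v$. For $i \neq M$ the subalgebra $\widehat{U_i} \cong U_{q_i}(\mathcal{L}\mathfrak{sl}_2)$ acts on $\widehat{U_i}\bar v$ as a finite-dimensional highest weight module with Drinfel'd polynomial $P_i$, so Theorem \ref{thm: Weyl modules for quantum affine algebras} gives $(X_{i,0}^-)^{1+\deg P_i}\bar v = 0$. For $i = M$, finiteness of $\mathrm{Vect}\langle X_{M,n}^- \bar v : n \in \BZ\rangle$ produces a linear relation which, upon applying the $h_{M-1,t}$ and using $[h_{M-1,t}, X_{M,k}^-] = \frac{[t]_q}{t} X_{M,k+t}^-$ exactly as in the proof of Lemma \ref{lem: finite-dimensional modules}, takes the shape of Relation \eqref{rel: Weyl odd} for some $Q \in 1 + z\mathbb{C}[z]$; applying $X_{M,0}^+$ and using $X_{M,n}^+\bar v = 0$ forces $Q f = 0$, so $(\underline{P},f,c;Q) \in \tilde{\mathcal{R}}_{M,N}$. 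As $\theta$ is a module map annihilating every defining generator of $\ker(\Verma(\underline{P},f,c) \twoheadrightarrow \Weyl(\underline{P},f,c;Q))$, it factors through $\Weyl(\underline{P},f,c;Q)$.

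Part (c) is where I expect the main obstacle to lie. The desired factorization exists iff $J^{(2)} := \ker(\Verma \twoheadrightarrow \Weyl(\underline{P},f,c;Q_2)) \subseteq J^{(1)} := \ker(\Verma \twoheadrightarrow \Weyl(\underline{P},f,c;Q_1))$. Both kernels are generated by the common even relations $v_j = (X_{j,0}^-)^{1+\deg P_j}v$ ($j \neq M$) together with the odd relation $v_M^{(i)} = \sum_{s=0}^{d_i} a^{(i)}_{d_i-s} X_{M,s}^- v$; because the $v_j$ sit in weights $\lambda - (1+\deg P_j)\alpha_j$ with $\alpha_j$ linearly independent from $\alpha_M$, the containment collapses to $v_M^{(2)} \in (J^{(1)})_{\lambda - \alpha_M}$. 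Here I invoke Part I of the proof of Theorem \ref{thm: main}, which gives $(\Verma)_{\lambda-\alpha_M} = \bigoplus_n \mathbb{C} X_{M,n}^- v$ and $(J^{(1)})_{\lambda-\alpha_M} = \sum_n \mathbb{C} v_M^{(1)}(n)$ with $v_M^{(1)}(n) = \sum_{s} a^{(1)}_{d_1-s} X_{M,s+n}^- v$. Under the identification $X_{M,n}^- v \leftrightarrow z^n$, the weight space $(\Verma)_{\lambda-\alpha_M}$ is $\mathbb{C}[z,z^{-1}]$, the subspace $(J^{(1)})_{\lambda-\alpha_M}$ is the ideal generated by the reversed polynomial $\tilde{Q}_1(z) := z^{d_1} Q_1(z^{-1})$, and $v_M^{(2)}$ corresponds to $\tilde{Q}_2$. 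Thus the containment holds iff $\tilde{Q}_1 \mid \tilde{Q}_2$ in $\mathbb{C}[z,z^{-1}]$; since each $Q_i \in 1 + z\mathbb{C}[z]$ has nonzero constant term, its roots are nonzero and $\tilde{Q}_i$ has precisely the inverse roots, so this is equivalent to $Q_1 \mid Q_2$ in $\mathbb{C}[z]$. The two delicate points are the reduction to the single weight space $\lambda - \alpha_M$ (legitimate only because the simple roots are independent, which uses $M \neq N$) and the bookkeeping that transports divisibility in the Laurent ring to divisibility in $\mathbb{C}[z]$ through root inversion.
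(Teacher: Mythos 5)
Your parts (a) and (b) follow the paper's own route: the paper disposes of both simply by citing Lemma \ref{lem: finite-dimensional modules} and Theorem \ref{thm: main}, and your unwinding of those citations (absorbing the signs $\varepsilon_i$ into $\pi \in D$, uniqueness of the simple $\Ud$-quotient of the Verma module via Lemma \ref{lem: extension}, the shift trick with the $h$-operators followed by application of $X_{M,0}^+$) is exactly what those proofs contain. Part (c) is where you genuinely diverge. The paper treats the ``if'' direction as immediate from the definitions, and for ``only if'' it assumes WLOG $M>N$, observes that a factorization induces a surjection $(\Weyl(\underline{P},f,c;Q_2))_{\lambda-\alpha_M} \twoheadrightarrow (\Weyl(\underline{P},f,c;Q_1))_{\lambda-\alpha_M}$ intertwining $h_{M-1,1}$, and invokes Proposition \ref{prop: uniqueness of Weyl modules}: the characteristic polynomial of $h_{M-1,1}$ on each side is $Q_i(z-\theta_{M-1})$, and a surjection forces the characteristic polynomial of the target to divide that of the source. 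You instead work upstream in the Verma module: factorization equals kernel containment, $(\Verma)_{\lambda-\alpha_M}$ is identified with $\mathbb{C}[z,z^{-1}]$, each $(J^{(i)})_{\lambda-\alpha_M}$ with the ideal generated by the reversed polynomial $\tilde{Q}_i$, and Laurent-ring divisibility is transported to $Q_1 \mid Q_2$ in $\mathbb{C}[z]$ because the $Q_i$ have nonzero constant term. Both arguments rest on the same computation (Part I of the proof of Theorem \ref{thm: main}); yours treats the two directions of the equivalence uniformly and bypasses Proposition \ref{prop: uniqueness of Weyl modules}, while the paper's is shorter given that proposition is already available. One point you should add: the description $(J^{(1)})_{\lambda-\alpha_M} = \sum_{n} \mathbb{C}\, v_M^{(1)}(n)$ that you import from Part I is established there under the hypothesis $M>1$, since the shifted relations are produced by commuting with $h_{M-1,s}$, which does not exist when $M=1$. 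As the paper does, you should either reduce to $M>N$ via the isomorphism $\pi_{M,N}$ of Lemma \ref{lem: automorphism of Dynkin diagrams} (checking that divisibility of the $Q$'s is preserved under the induced change of parameters, which holds because the roots involved are nonzero and get inverted), or, when $M=1<N$, run the same shifting argument with $h_{M+1,s}$ in place of $h_{M-1,s}$.
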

\begin{proof}
(a) and (b) come from Lemma \ref{lem: finite-dimensional modules} and Theorem \ref{thm: main}. For (c), the \lq\lq if\rq\rq part is clear from the definition of Weyl modules. Without loss of generality, assume $M>N$. Suppose that $\Weyl(\underline{P},f,c;Q_2) \twoheadrightarrow \Weyl(\underline{P},f,c;Q_1)$ and we have a surjection \begin{displaymath}
(\Weyl(\underline{P},f,c;Q_2))_{\lambda_{(\underline{P},f,c)} - \alpha_M} \twoheadrightarrow (\Weyl(\underline{P},f,c;Q_1))_{\lambda_{(\underline{P},f,c)} - \alpha_M}
\end{displaymath}
which respects clearly the actions of $h_{M-1,1}$. We conclude from Proposition \ref{prop: uniqueness of Weyl modules}.  
\end{proof}
\subsection{Integrable representations}
This section deals with generalisations of Theorem \ref{thm: Weyl modules for quantum affine algebras} (c). We shall see that, for all $\Lambda \in \mathcal{R}_{M,N}$, there exists a largest integrable module of  highest weight $\Lambda$. However, such modules turn out to be infinite-dimensional, contrary to the quantum affine algebra case.
\begin{definition}
Call a $\Ud$-module {\it integrable} if the actions of $X_{i,0}^{\pm}$ are locally nilpotent for $1 \leq i \leq M+N-1$.
\end{definition}
Note that the actions of $X_{M,0}^{\pm}$ are always nilpotent. From the representation theory of $U_q(\mathfrak{sl}_2)$ \cite[Chapter 2]{Jantzen}, we see that finite-dimensional $\Ud$-modules are always integrable. In particular, the Weyl modules and all their quotients are integrable.

\paragraph{Universal Weyl modules.} Let $(\underline{P},f,c) \in \mathcal{R}_{M,N}$. The associated universal Weyl module, denoted by $\Weyl(\underline{P},f,c)$, is the $\Ud$-module generated by $v_{(\underline{P},f,c)}$ of $\super$-degree $\even$ subject to Relations \eqref{rel: highest weight vector 1}-\eqref{rel: hwv3} and \eqref{rel: weyl even}. $\Weyl(\underline{P},f,c)$ becomes a $U_q(\mathcal{L}'\mathfrak{sl}(M,N))$-module by Relation \eqref{rel: auxillary}. Note that $\Weyl(\underline{P},f,c)$, being a quotient of $\Verma(\underline{P},f,c)$, is $P_{M,N}$-graded. Let $\mathrm{wt}(\Weyl(\underline{P},f,c)) \subseteq \lambda_{(\underline{P},f,c)} - Q_{M,N}^+$ be the set of weights. 
\begin{prop}    \label{prop: universal Weyl modules are integrable}
Suppose $MN > 0$. Fix $(\underline{P},f,c) \in \mathcal{R}_{M,N}$. Then there exists $k \in \BZ_{>0}$ such that: 
\begin{displaymath}
\mathrm{wt}(\Weyl(\underline{P},f,c)) \subseteq \{ \lambda_{(\underline{P},f,c)} - \sum_{i=1}^{M+N-1} u_i \alpha_i \in P_{M,N} \ |\ u_M - u_i > -k \ \textrm{for}\ 1 \leq i \leq M+N-1 \}.
\end{displaymath}
In particular, $\Weyl(\underline{P},f,c)$ is integrable.
\end{prop}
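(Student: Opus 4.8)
The plan is to establish the asymmetric weight bound by induction on $M$ (with $N$ fixed), controlling the simple roots $\alpha_i$ with $i\geq M$ directly, and then deducing the bound for $i\leq M$ from the Dynkin-diagram symmetry $\pi_{M,N}$ of Lemma \ref{lem: automorphism of Dynkin diagrams}. The integrability statement will then follow formally from the resulting bound together with the fact that every weight lies in $\lambda_{(\underline{P},f,c)} - Q_{M,N}^+$.

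First I would reuse the PBW decomposition from Part II of the proof of Theorem \ref{thm: main}, namely
$U_q^-(\mathcal{L}\mathfrak{sl}(M,N)) = \sum_{d_i\geq 0}(X_1^-)^{d_1}(X_{\alpha_1+\alpha_2}^-)^{d_2}\cdots(X_{\alpha_1+\cdots+\alpha_{M+N-1}}^-)^{d_{M+N-1}}\,U_{M-1,N}^-$, where $U_{M-1,N}^-\cong U_q^-(\mathcal{L}\mathfrak{sl}(M-1,N))$. Applying the factors $X_{\alpha_1+\cdots+\alpha_j}^-$ to $U_{M-1,N}^-v = U_{M-1,N}v$ shows that every weight of $\Weyl(\underline{P},f,c)$ has the form $\mu = \lambda_{(\underline{P},f,c)} - \sum_{j=1}^{M+N-1} d_j(\alpha_1+\cdots+\alpha_j) - \sum_{i=2}^{M+N-1} b_i\alpha_i$ with $d_j\in\BZ_{\geq 0}$ and $\lambda_{(\underline{P},f,c)} - \sum_{i\geq 2} b_i\alpha_i \in \mathrm{wt}(U_{M-1,N}v)$. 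Writing $\mu = \lambda_{(\underline{P},f,c)} - \sum_i u_i\alpha_i$, one reads off $u_i = \sum_{j\geq i} d_j + b_i$ for $i\geq 2$ (with $b_1:=0$). The crucial observation is that $U_{M-1,N}v$ is a quotient of the universal Weyl module over $U_q(\mathcal{L}\mathfrak{sl}(M-1,N))$: Relations \eqref{rel: highest weight vector 1}-\eqref{rel: hwv3} and \eqref{rel: weyl even} restrict to exactly its defining relations, and the odd root remains at position $M$.

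The heart of the argument is then the estimate, valid for $M\leq i\leq M+N-1$,
\[
u_M - u_i = \sum_{j=M}^{i-1} d_j + (b_M - b_i) \geq b_M - b_i,
\]
since the root-vector factors peeled off to the left contribute the non-negative quantity $\sum_{j=M}^{i-1} d_j$. By the induction hypothesis applied to $U_{M-1,N}v$ (for $M\geq 2$), or by finite-dimensionality of the ordinary Weyl module $U_{0,N}v$ via Theorem \ref{thm: Weyl modules for quantum affine algebras} (in the base case $M=1$), there is $k_1\in\BZ_{>0}$ with $b_M - b_i > -k_1$, hence $u_M - u_i > -k_1$ for all $i\geq M$. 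To cover $i\leq M$ I would apply this right-hand bound to $U_q(\mathcal{L}\mathfrak{sl}(N,M))$ and pull it back along $\pi_{M,N}$, which carries a universal Weyl module to a universal Weyl module and sends $\alpha_i\mapsto\alpha_{M+N-i}$ and $\alpha_M\mapsto\alpha_N$; the bound for indices $\geq N$ over $(N,M)$ becomes a bound $u_M - u_i > -k_2$ for $1\leq i\leq M$ over $(M,N)$. Taking $k=\max(k_1,k_2)$ yields the displayed inclusion.

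Finally, integrability is soft: since $\mathrm{wt}(\Weyl(\underline{P},f,c))\subseteq \lambda_{(\underline{P},f,c)} - Q_{M,N}^+$ every $u_i\geq 0$, so $(X_{i,0}^+)^{u_i+1}$ annihilates any weight-$\mu$ vector; for $i\neq M$ the bound $u_M - u_i > -k$ caps the length of the $\alpha_i$-string generated by $X_{i,0}^-$, which increases $u_i$ while fixing $u_M$; and $X_{M,0}^{\pm}$ square to zero by Relation \eqref{rel: Drinfel1}. I expect the main obstacle to be precisely the asymmetry: unlike in Theorem \ref{thm: main}, the coordinate $u_M$ cannot be bounded, so the PBW reduction only ever controls the roots lying on one side of the odd root $\alpha_M$, and the genuinely new ingredient is to marry the one-sided induction to the symmetry $\pi_{M,N}$ in order to recover the other side.
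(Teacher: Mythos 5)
Your proof is correct and is essentially the paper's own argument: the paper's proof of this proposition is only a two-line sketch --- use PBW generators to deduce restrictions on the set of weights, and use $\pi_{M,N}$ to change the form of the PBW generators --- and your induction via the decomposition $U_q^-(\mathcal{L}\mathfrak{sl}(M,N))=\sum_{d_j\geq 0}(X_1^-)^{d_1}\cdots(X_{\alpha_1+\cdots+\alpha_{M+N-1}}^-)^{d_{M+N-1}}U_{M-1,N}^-$, reducing to $U_{M-1,N}v$ as a quotient of a smaller universal Weyl module, implements exactly the first ingredient. Your use of $\pi_{M,N}$ (pulling back universal Weyl modules over $U_q(\mathcal{L}\mathfrak{sl}(N,M))$ to obtain the bound $u_M-u_i>-k_2$ for $i\leq M$) is the same second ingredient, merely phrased at the level of modules rather than of transformed PBW generators.
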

\begin{proof}
The idea is similar to that of the proof of Theorem \ref{thm: main}: to use  PBW generators to deduce restrictions on the set of weights. One also needs the isomorphism $\pi_{M,N}: \Ud \longrightarrow U_q(\mathcal{L}\mathfrak{sl}(N,M)) $ to change the forms of the PBW generators.  
\end{proof}
Thus, for $(\underline{P},f,c) \in \mathcal{R}_{M,N}$, the universal Weyl module  $\Weyl(\underline{P},f,c)$ is the largest integrable highest weight module of highest weight $(\underline{P},f,c)$.   Note however that $\Weyl(\underline{P},f,c)$ is by no means finite-dimensional when $MN > 0$. Indeed, for all $(\underline{P},f,c;Q)$, we have an epimorphism of $\Ud$-modules $\Weyl(\underline{P},f,c) \twoheadrightarrow \Weyl(\underline{P},f,c;Q)$. It follows from Proposition \ref{prop: uniqueness of Weyl modules} that $\dim \Weyl(\underline{P},f,c) > \deg Q$. As $\deg Q$ can be chosen arbitrarily large, $\Weyl(\underline{P},f,c)$ is infinite-dimensional.
\section{Evaluation morphisms}   \label{sec: 5}
Throughout this section, we assume $M > 1, N \geq 1$ and $M \neq N$. After \cite[Theorem 8.5.1]{Yam2}, there is another presentation of the quantum superalgebra $U_q(\mathcal{L}\mathfrak{sl}(M,N))$. From this new presentation, we get a structure of Hopf superalgebra on $\Ud$ (in the usual sense). Using evaluation morphisms between $\Ud$ and $U_q(\mathfrak{gl}(M,N))$, we construct certain finite-dimensional simple $\Ud$-modules.  
\subsection{Chevalley presentation of $\Ud$ for $M \neq N$}
Enlarge the Cartan matrix $C = (c_{i,j})_{1 \leq i,j \leq M+N-1}$ to the affine Cartan matrix $\widehat{C} = (c_{i,j})_{0 \leq i,j \leq M+N-1}$ with $c_{0,0} = 0, c_{0,i} = c_{i,0} = -\delta_{i,1} + \delta_{i,M+N-1}$ for $1 \leq i \leq M+N-1$. Set $q_0 = q$.
\begin{definition} \cite[Proposition 6.7.1]{Yam2}
$U_q'(\widehat{\mathfrak{sl}(M,N)})$ is the superalgebra generated by $E_i^{\pm},K_i^{\pm 1}$ for $0 \leq i \leq M+N-1$ with the $\super$-grading $|E_0^{\pm}| =  |E_M^{\pm}|  = \odd$ and $\even$ for other generators, and with the following relations: $0 \leq i,j \leq M+N-1$
\begin{align*}
& K_i K_i^{-1} = 1 = K_i^{-1} K_i,  \\
& K_i E_j^{\pm} K_i^{-1} = q^{\pm c_{i,j}} E_j^{\pm}, \\
& [E_i^+, E_j^-] = \delta_{i,j} \frac{K_i - K_i^{-1}}{q_i - q_i^{-1}}, \\
& [E_i^{\pm}, E_j^{\pm}] = 0 \ \ \ \mathrm{for}\ c_{i,j} = 0, \\
& [E_i^{\pm}, [E_i^{\pm}, E_j^{\pm}]_{q^{-1}}]_q = 0 \ \ \ \mathrm{for}\ c_{i,j} = \pm 1, i \neq 0,M, \\
& [[[E_{M-1}^{\pm}, E_M^{\pm}]_{q^{-1}}, E_{M+1}^{\pm}]_q, E_M^{\pm}] = 0 = [[[E_{1}^{\pm}, E_0^{\pm}]_{q^{-1}}, E_{M+N-1}^{\pm}]_q, E_0^{\pm}] \ \ \ \mathrm{when}\ M+N > 3, \\
& [E_0^{\pm},[E_2^{\pm},[E_0^{\pm},[E_2^{\pm},E_1^{\pm}]_q]] ]_{q^{-1}} = [E_2^{\pm},[E_0^{\pm},[E_2^{\pm},[E_0^{\pm},E_1^{\pm}]_q]] ]_{q^{-1}}\ \ \ \mathrm{when}\ (M,N) = (2,1).
\end{align*}
\end{definition}
Remark that $c := K_0 K_1 \cdots K_{M+N-1}$ is central in $\Uc$. We reformulate part of  \cite[Theorem 8.5.1]{Yam2}:
\begin{theorem} \label{thm: two presentations}
There exists a unique superalgebra homomorphism $\Phi: \Uc \longrightarrow \Ud$ such that:
\begin{align*}
&\Phi(K_0) = (K_1\cdots K_{M+N-1})^{-1},\  \Phi(K_i) = K_i,\ \Phi(E_i^{\pm}) = X_{i,0}^{\pm} \ \ \mathrm{for}\ 1 \leq i \leq M+N-1, \\
&\Phi(E_0^+) = (-1)^{M+N-1} q^{N-M}[\cdots [X_{1,1}^-,X_{2,0}^-]_{q_2},\cdots, X_{M+N-1,0}^-]_{q_{M+N-1}} (K_1\cdots K_{M+N-1})^{-1}, \\
&\Phi(E_0^-) = (K_1 \cdots K_{M+N-1}) [\cdots [X_{1,-1}^+,X_{2,0}^+]_{q_2},\cdots, X_{M+N-1,0}^+]_{q_{M+N-1}}.
\end{align*}
Furthermore, $\Phi$ is surjective with kernel $\ker \Phi = \Uc (c - 1)$.
\end{theorem}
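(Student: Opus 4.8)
The plan is to treat this as the Drinfel'd--Chevalley comparison theorem for $\Ud$: I would establish existence and surjectivity of $\Phi$ by direct verification, and then pin down the kernel. Uniqueness of $\Phi$ is immediate, since the $E_i^{\pm}$ and $K_i^{\pm 1}$ generate $\Uc$ while their images are prescribed. For existence, the task is to check that the assigned images satisfy every defining relation of $\Uc$. The relations not involving the affine node $0$ concern only $\Phi(E_i^{\pm}) = X_{i,0}^{\pm}$ and $\Phi(K_i) = K_i$ for $1 \le i \le M+N-1$, and these reduce to the specialisations at $n=0$ of the Drinfel'd relations \eqref{rel: Cartan}--\eqref{rel: ocillation4}, which reproduce exactly the finite Chevalley and Serre relations (including $(X_{M,0}^{\pm})^2=0$). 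The substantive checks are the relations around node $0$: the Cartan relations $K_i\,\Phi(E_0^{\pm})\,K_i^{-1} = q^{\pm c_{i,0}}\Phi(E_0^{\pm})$, the nilpotency $\Phi(E_0^{\pm})^2 = 0$ and the vanishing $[\Phi(E_0^{\pm}),X_{j,0}^{\pm}]=0$ for $c_{0,j}=0$ (that is $2\le j\le M+N-2$), the mixed super-commutator $[\Phi(E_0^+),\Phi(E_0^-)] = (\Phi(K_0)-\Phi(K_0)^{-1})/(q-q^{-1})$, and the Serre-type relations linking node $0$ to nodes $1$ and $M+N-1$. Each is a computation with the nested $q$-brackets defining $\Phi(E_0^{\pm})$, using \eqref{rel: triangular1}, \eqref{rel: triangular2} and \eqref{rel: Drinfeld2}.

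For surjectivity, observe that $\mathrm{Im}\,\Phi$ already contains $X_{i,0}^{\pm}$ and $K_i^{\pm 1}$ for $1 \le i \le M+N-1$. Multiplying $\Phi(E_0^-)$ by $(K_1\cdots K_{M+N-1})^{-1}$ inside the image isolates the nested bracket $[\cdots[X_{1,-1}^+,X_{2,0}^+]_{q_2},\cdots,X_{M+N-1,0}^+]_{q_{M+N-1}}$, and taking successive super-brackets against $X_{M+N-1,0}^-,\dots,X_{2,0}^-$ peels off one layer at a time through \eqref{rel: triangular2}, recovering $X_{1,-1}^+$; symmetrically $\Phi(E_0^+)$ yields $X_{1,1}^-$. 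Commuting these new elements with the $X_{i,0}^{\pm}$ via \eqref{rel: Drinfeld2} then generates all $X_{i,n}^{\pm}$ with $n\in\BZ$, and the brackets $[X_{i,m}^+,X_{i,s-m}^-]$ produce the series $\phi_{i,s}^{\pm}$ and hence, via \eqref{rel: triangular2}, all $h_{i,s}$. Therefore $\Phi$ is onto.

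For the kernel, a direct computation gives $\Phi(c)=\Phi(K_0)\prod_{i=1}^{M+N-1}\Phi(K_i) = (K_1\cdots K_{M+N-1})^{-1}(K_1\cdots K_{M+N-1}) = 1$, so $c-1\in\ker\Phi$, and since $c$ is central, $\Uc(c-1)\subseteq\ker\Phi$. The reverse inclusion $\ker\Phi\subseteq\Uc(c-1)$ is the crux and the main obstacle: it is equivalent to saying that $\Phi$ descends to an isomorphism $\Uc/\Uc(c-1)\xrightarrow{\sim}\Ud$, i.e.\ that no relations beyond $c=1$ are collapsed. This is precisely the content of Yamane's realisation \cite[Theorem 8.5.1]{Yam2}, which I would invoke directly rather than reprove; concretely one produces the inverse Drinfel'd-to-Chevalley homomorphism $\Ud\to\Uc/\Uc(c-1)$ and checks it is a two-sided inverse to the map induced by $\Phi$. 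Establishing this last inclusion from scratch would require a faithful model of both presentations (a loop/evaluation realisation, or a matching of PBW-type bases), so leaning on \cite{Yam2} is the efficient route and the rest of the argument merely verifies that our normalisations of $\Phi(E_0^{\pm})$ agree with his.
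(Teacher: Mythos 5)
Your proposal is correct and takes essentially the same route as the paper: the paper offers no independent proof of this theorem, presenting it explicitly as a reformulation of Yamane's result \cite[Theorem 8.5.1]{Yam2}, which is exactly what you invoke for the one genuinely hard point, the inclusion $\ker \Phi \subseteq \Uc(c-1)$. The additional verifications you sketch (uniqueness, the node-$0$ relations, surjectivity by peeling off brackets via Relation \eqref{rel: triangular2}) are consistent with the paper's formulas and with its Appendix B computations, but they are subsumed by the citation, so the two arguments rest on the same external foundation.
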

Note that $\Phi(E_0^+) = (-1)^{M+N-1}q^{N-M} X_{\alpha_1+\cdots+\alpha_{M+N-1}}^-(1) \Phi(K_0)$ and $\Phi(E_0^-) = \Phi(K_0^{-1}) X_{\alpha_1+\cdots+\alpha_{M+N-1}}(-1)$ from Definition \ref{def: positive root vectors} and the proof of Theorem \ref{thm: main}. 

From the construction of $\Uc$ in \S 6.1 of \cite{Yam2}, we see that $\Uc$ is endowed with a Hopf superalgebra structure:  for $0 \leq i \leq M+N-1$
\begin{equation}  \label{equ: Hopf superalgebra}
\Delta (K_i) = K_i \stensor K_i,\ \Delta(E_i^+) = 1 \stensor E_i^+ + E_i^+ \stensor K_i^{-1},\ \Delta (E_i^-) = K_i \stensor E_i^- + E_i^- \stensor 1.
\end{equation}
 Here the coproduct formula is consistent with that of  \eqref{equ: coproduct BKK}. Under this coproduct, $\Delta (c) = c \otimes c$. Hence, $\ker \Phi = \Uc (c-1)$ becomes a Hopf ideal of $\Uc$, and $\Phi$ induces a Hopf superalgebra structure on $\Ud$.

To simplify notations, let $U := \Ud$ and $X^{\pm} := \sum\limits_{i=1}^{M+N-1} X_i^{\pm}$ where $X_i^{\pm} = \sum\limits_{n \in \BZ} \mathbb{C} X_{i,n}^{\pm} \subseteq U$. 

\begin{lem}  \label{lem: coproduct Drinfel'd zero generators}
 For $1 \leq i \leq M+N-2$, there exist $x_{i}^{\pm},y_i^{\pm}, z_i^{\pm} \in \mathbb{C}$ such that modulo $U (X^-)^2 \stensor U (X^+)^2$
\begin{eqnarray*}
 \Delta (h_{i,1}) &\equiv & h_{i,1} \stensor 1 + 1 \stensor h_{i,1} + x_i^+ X_{i-1,1}^-K_{i-1}^{-1} \stensor K_{i-1}X_{i-1,0}^+ + y_i^+  X_{i,1}^-K_{i}^{-1} \stensor K_{i}X_{i,0}^+ + z_i^+ X_{i+1,1}^-K_{i+1}^{-1} \stensor K_{i+1}X_{i+1,0}^+  \\
 \Delta (h_{i,-1}) &\equiv & h_{i,-1} \stensor 1 + 1 \stensor h_{i,-1} + x_i^- X_{i-1,0}^-K_{i-1}^{-1} \stensor K_{i-1} X_{i-1,-1}^+ + y_i^-X_{i,0}^-K_{i}^{-1} \stensor K_{i} X_{i,-1}^+ \\
 &\ & + z_i^- X_{i+1,0}^-K_{i+1}^{-1} \stensor K_{i+1} X_{i+1,-1}^+.
\end{eqnarray*}
Moreover, $z_i^{\pm} = \pm (q_i - q_i^{-1})$. We understand $X_{0,n}^{\pm} = 0, K_0^{\pm 1} = \Phi(K_0^{\pm 1})$.
\end{lem}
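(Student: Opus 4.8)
The plan is to transport the whole computation to the Chevalley presentation, where the coproduct is the explicit one of \eqref{equ: Hopf superalgebra}, and to run everything modulo the subspace $\mathcal{I} := U(X^-)^2 \stensor U(X^+)^2$. The first step is to re-express $h_{i,\pm 1}$ through Drinfel'd currents whose coproducts I can control. From the defining series of the $\phi_{i,n}^{\pm}$ one reads off $\phi_{i,1}^+ = (q_i-q_i^{-1})K_i h_{i,1}$ and $\phi_{i,-1}^- = -(q_i-q_i^{-1})K_i^{-1}h_{i,-1}$, together with $\phi_{i,1}^- = \phi_{i,-1}^+ = 0$. Substituting these into Relation \eqref{rel: triangular2} yields the clean identities
\begin{align*}
K_i h_{i,1} = [X_{i,0}^+, X_{i,1}^-], \qquad K_i^{-1} h_{i,-1} = [X_{i,-1}^+, X_{i,0}^-].
\end{align*}
Since $\Delta$ is a homomorphism of superalgebras and $\Delta(K_i) = K_i \stensor K_i$, this reduces the problem for $h_{i,1}$ to knowing $\Delta(X_{i,0}^+)$ and $\Delta(X_{i,1}^-)$, and symmetrically for $h_{i,-1}$. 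The degree-zero factor is immediate: $X_{i,0}^{\pm} = \Phi(E_i^{\pm})$ by Theorem \ref{thm: two presentations}, so \eqref{equ: Hopf superalgebra} gives $\Delta(X_{i,0}^+) = 1 \stensor X_{i,0}^+ + X_{i,0}^+ \stensor K_i^{-1}$.

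The core of the argument is then the coproduct of the degree-one current $X_{i,1}^-$ modulo $\mathcal{I}$. The loop degree is injected only by the affine node: by Theorem \ref{thm: two presentations}, $\Phi(E_0^+)$ is, up to an invertible Cartan factor, the top degree-one current $X_{\alpha_1+\cdots+\alpha_{M+N-1}}^-(1)$ of Definition \ref{def: positive root vectors}, and its coproduct is governed by the two-term formula $\Delta(E_0^+) = 1 \stensor E_0^+ + E_0^+ \stensor K_0^{-1}$. I would isolate $X_{i,1}^-$ from this top current by successive (super-)commutators with the degree-zero generators $X_{j,0}^- = \Phi(E_j^-)$, each again carrying a two-term coproduct. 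The enabling bookkeeping fact to establish is that $\mathcal{I}$ is stable under multiplication by the coproduct factors of the $\Delta(X_{j,0}^{\pm})$ that occur, so that the reduction modulo $\mathcal{I}$ is compatible with the iterated bracket and $\Delta$ may be evaluated termwise. Keeping only the surviving terms one should reach an expression of the form $\Delta(X_{i,1}^-) \equiv X_{i,1}^- \stensor 1 + (\text{grouplike}) \stensor X_{i,1}^- + (\text{cross terms})\pmod{\mathcal{I}}$, where the weight grading and the tridiagonal shape of $(c_{i,j})$ confine the cross terms to the neighbouring nodes $j$ with $c_{i,j}\neq 0$, i.e. $j \in \{i-1,i,i+1\}$.

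It then remains to assemble $\Delta(h_{i,1}) = (K_i^{-1}\stensor K_i^{-1})\,[\Delta(X_{i,0}^+),\Delta(X_{i,1}^-)]$ and reduce modulo $\mathcal{I}$. The weight-zero part collapses to $h_{i,1}\stensor 1 + 1\stensor h_{i,1}$; the bracket $[1\stensor X_{i,0}^+,\,X_{i,1}^-\stensor 1]$ produces the $j=i$ contribution $y_i^+$, and the cross terms inherited from $\Delta(X_{i,1}^-)$ produce the $j=i\pm 1$ contributions, each taking the stated shape $X_{j,1}^-K_j^{-1}\stensor K_jX_{j,0}^+$ once the $K$-dressing is normalised by the weight of the term via Relation \eqref{rel: triangular1}. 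The case of $h_{i,-1}$ is entirely parallel, starting instead from $\Phi(E_0^-)$ and the top current $X_{\alpha_1+\cdots+\alpha_{M+N-1}}(-1)$, and the conventions $X_{0,n}^{\pm}=0$, $K_0^{\pm 1}=\Phi(K_0^{\pm 1})$ absorb the boundary case $i=1$.

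I expect the main obstacle to be the controlled evaluation of $\Delta(X_{i,1}^{\mp})$ modulo $\mathcal{I}$ in the second step: one must verify that, in the iterated commutator realising these currents, no term with two positive factors on the right (or two negative factors on the left) escapes $\mathcal{I}$ outside the listed ones, and one must then pin down the scalars. Determining $z_i^{\pm} = \pm(q_i-q_i^{-1})$ exactly — in contrast to $x_i^{\pm}, y_i^{\pm}$, which the statement deliberately leaves unspecified — will require tracking the precise coefficient generated by the last super-commutator at node $i+1$; and genuine care with super-signs is needed at the odd node $i=M$, where $X_{M,n}^{\pm}$ and $E_0^{\pm}$ are odd and all brackets are true super-commutators.
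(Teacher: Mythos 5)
Your high-level strategy --- transporting everything to the Chevalley presentation via $\Phi$ and the two-term coproducts \eqref{equ: Hopf superalgebra}, then reducing modulo $\mathcal{I} = U(X^-)^2 \stensor U(X^+)^2$ --- is the same as the paper's (Appendix B), and your starting identities $K_i h_{i,1} = [X_{i,0}^+, X_{i,1}^-]$ and $K_i^{-1}h_{i,-1} = [X_{i,-1}^+, X_{i,0}^-]$ are correct. But the execution has a genuine gap, plus a secondary directional error. The directional error: you cannot isolate $X_{i,1}^-$ from $\Phi(E_0^+)$ by commutators with the \emph{negative} generators $\Phi(E_j^-)$. Since $\Phi(E_0^+)$ has weight $-(\alpha_1+\cdots+\alpha_{M+N-1})$ and $X_{i,1}^-$ has weight $-\alpha_i$, the weight must be raised, i.e.\ one must bracket with the positive generators $\Phi(E_j^+) = X_{j,0}^+$, $j \neq i$; this is what the paper does, e.g.\ $\lfloor X_{2,0}^+, X_{3,0}^+, \cdots, X_{M+N-1,0}^+, E_0^+ \rfloor = -X_{1,1}^- K_1^{-1}$.

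The genuine gap is that your ``enabling bookkeeping fact'' is false: $\mathcal{I}$ is a left ideal of $U \stensor U$ but not a right ideal, and it is not stable under the super-commutators you need. Already in $U_{q}(\mathfrak{sl}_2)$ one has $F^2 E = E F^2 - cF - Fc$ with $c = \frac{K-K^{-1}}{q-q^{-1}}$, and $cF + Fc$ is a nonzero combination of terms $K^{\pm 1}F$, hence (by the PBW basis $E^aK^bF^d$) not in $UF^2$; the same mechanism, via Relation \eqref{rel: triangular2}, shows that right multiplication of $U(X^-)^2$ by a positive current kills one of the two negative factors and produces terms with a single $X^-$. Consequently, if $\Delta(X_{i,1}^-)$ is only known up to an error $E \in \mathcal{I}$, the contribution $\pm E\cdot(X_{i,0}^+ \stensor K_i^{-1})$ inside $[\Delta(X_{i,0}^+), \Delta(X_{i,1}^-)]$ yields uncontrolled terms in $UX^- \stensor U(X^+)^2$, which lie outside $\mathcal{I}$ and are not of the shape allowed by the lemma; the same defect arises at every stage of your proposed termwise evaluation of $\Delta(X_{i,1}^-)$ through iterated brackets. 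This is exactly why the paper runs the argument in the opposite order: it first establishes the \emph{exact} closed formulas \eqref{equ: h_{i,1}} and \eqref{equ: h_{i,-1}}, writing $h_{i,\pm 1}$ itself as a single nested quantum bracket of Chevalley generators, then applies $\Delta$ once, expands the multilinear bracket of two-term coproducts exactly, and discards terms lying in $\mathcal{I}$ only at the very last step; the coproducts of the currents $X_{j,n}^{\pm}$ (Proposition \ref{prop: coproduct formulas Drinfel'd}) are afterwards \emph{deduced} from this lemma, not used to prove it. To salvage your route you would have to keep all intermediate bracket computations exact (at which point it becomes the paper's proof with extra steps), or replace $\mathcal{I}$ by a filtration genuinely stable under all the multiplications involved.
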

The idea is to express $h_{i,\pm 1}$ as products in the $\Phi(E_i^{\pm}), \Phi(K_i^{\pm 1})$ with $0 \leq i \leq M+N-1$ and then use the coproduct Formulae \eqref{equ: Hopf superalgebra}. Details are left to  Appendix B.

Now we can deduce a similar result of \cite[Proposition 4.4]{CP1}
\begin{prop}  \label{prop: coproduct formulas Drinfel'd}
Let $1 \leq j \leq M+N-1$ and $n \in \BZ_{>0}$. In the vector superspace $U \stensor U$, we have 
\begin{itemize}
\item[(a)] $\Delta(X_{j,0}^+) = 1 \stensor X_{j,0}^+ + X_{j,0}^+ \stensor K_j^{-1}$, and modulo $UX^- \stensor U(X^+)^2$,
\begin{align*}
& \Delta(X_{j,n}^+) \equiv 1 \stensor X_{j,n}^+ + X_{j,n}^+ \stensor K_j^{-1} + \sum_{s=1}^n K_j^{-1}\phi_{j,s}^+ \stensor X_{j,n-s}^+, \\
& \Delta(X_{j,-n}^+) \equiv K_j^{-2} \stensor X_{j,-n}^+ + X_{j,-n}^+ \stensor K_j^{-1} + \sum_{s=1}^{n-1} K_j^{-1} \phi_{j,-s}^- \stensor X_{j,-n+s}^+,
\end{align*}
\item[(b)] $\Delta(X_{j,0}^-) = K_j \stensor X_{j,0}^- + X_{j,0}^- \stensor 1$, and modulo $U(X^-)^2 \stensor U X^+$, 
\begin{align*}
& \Delta(X_{j,n}^-) \equiv K_j \stensor X_{j,n}^- + X_{j,n}^- \stensor K_j^2 + \sum_{s=1}^{n-1} X_{j,s}^- \stensor K_j \phi_{j,n-s}^+, \\
& \Delta(X_{j,-n}^-) \equiv K_j \stensor X_{j,-n}^- + X_{j,-n}^- \stensor 1 + \sum_{s=1}^n X_{j,-n+s}^- \stensor K_j \phi_{j,-s}^-,
\end{align*}
\item[(c)] $\Delta(\phi_{j,0}^{\pm}) = \phi_{j,0}^{\pm} \stensor \phi_{j,0}^{\pm}$, and modulo $U X^- \stensor UX^+ + UX^+ \stensor UX^-$, 
\begin{align}  
\Delta (\phi_{j,\pm n}^{\pm}) \equiv \sum_{s=0}^n \phi_{j, \pm s}^{\pm} \stensor \phi_{j,\pm (n-s)}^{\pm}.
\end{align}
\end{itemize}
\end{prop}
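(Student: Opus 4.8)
The plan is to follow the inductive scheme of \cite[Proposition 4.4]{CP1}, adapted to the super setting. All three families (a), (b), (c) are generated from degree-zero data by repeatedly shifting the loop index, so the computation rests on two inputs. The first is the coproduct on Chevalley generators \eqref{equ: Hopf superalgebra}, which through $\Phi$ of Theorem \ref{thm: two presentations} furnishes the base cases $\Delta(X_{j,0}^+) = 1 \stensor X_{j,0}^+ + X_{j,0}^+ \stensor K_j^{-1}$, $\Delta(X_{j,0}^-) = K_j \stensor X_{j,0}^- + X_{j,0}^- \stensor 1$, and $\Delta(\phi_{j,0}^{\pm}) = \Delta(K_j^{\pm 1}) = K_j^{\pm 1} \stensor K_j^{\pm 1} = \phi_{j,0}^{\pm}\stensor \phi_{j,0}^{\pm}$. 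The second is Lemma \ref{lem: coproduct Drinfel'd zero generators}, giving $\Delta(h_{i,\pm 1})$ modulo $U(X^-)^2 \stensor U(X^+)^2$. The engine of the induction is the Cartan-current relation \eqref{rel: triangular1}: if $i$ is a neighbor of $j$ then $[h_{i,\pm 1}, X_{j,n}^{\pm}] = \kappa\, X_{j,n\pm 1}^{\pm}$ for the nonzero scalar $\kappa = \pm [l_i c_{i,j}]_{q_i} = \pm 1$, so that $X_{j,n\pm 1}^{\pm}$ is, up to $\kappa$, the (super-)bracket of $h_{i,\pm 1}$ with $X_{j,n}^{\pm}$.

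First I would prove (a) for $n>0$ by induction on $n$. Applying the algebra homomorphism $\Delta$ to the identity expressing $X_{j,n+1}^+$ as $\kappa^{-1}[h_{i,1}, X_{j,n}^+]$, inserting Lemma \ref{lem: coproduct Drinfel'd zero generators} and the induction hypothesis, I would compute the bracket modulo $UX^- \stensor U(X^+)^2$. The summands $h_{i,1}\stensor 1$ and $1 \stensor h_{i,1}$ of $\Delta(h_{i,1})$, bracketed against $X_{j,n}^+ \stensor K_j^{-1}$ and $1 \stensor X_{j,n}^+$, regenerate the leading terms $X_{j,n+1}^+ \stensor K_j^{-1}$ and $1 \stensor X_{j,n+1}^+$ together with the shifted sum $\sum_s K_j^{-1}\phi_{j,s}^+ \stensor X_{j,n+1-s}^+$. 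The crucial new summand $K_j^{-1}\phi_{j,n+1}^+ \stensor X_{j,0}^+$ comes from the cross term of $\Delta(h_{i,1})$ carrying $X_{j,1}^-$ in its left factor: bracketing it with $X_{j,n}^+ \stensor K_j^{-1}$ and invoking Relation \eqref{rel: triangular2} to rewrite $[X_{j,n}^+, X_{j,1}^-]$ as $(q_j - q_j^{-1})^{-1}\phi_{j,n+1}^+$ (the $\phi^-$ part vanishing in positive degree) produces exactly this term. To fix its coefficient I would take $i = j-1$, so that the relevant cross term is the one carrying the determined constant $z_{j-1}^+ = q_{j-1}-q_{j-1}^{-1}$; the odd node $j=M$ is treated the same way, its loop index being raised by an even neighbor since $[h_{M,\pm 1}, X_{M,n}^{\pm}] = 0$, while the boundary node $j=1$ is handled either directly from $\Phi(E_0^{\pm})$ or via the diagram symmetry $\pi_{M,N}$. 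The case $n<0$ is parallel, using $h_{i,-1}$.

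Part (b) follows by the same induction, now reducing modulo $U(X^-)^2 \stensor UX^+$ with base case $\Delta(X_{j,0}^-)$. For (c) I would use that, for $n>0$, Relation \eqref{rel: triangular2} expresses $\phi_{j,\pm n}^{\pm}$ as a scalar multiple of $[X_{j,\pm n}^{\pm}, X_{j,0}^{\mp}]$ (the opposite $\phi$ being absent); applying $\Delta$ as an algebra map, substituting the formulas of (a) and (b), and reducing modulo $UX^- \stensor UX^+ + UX^+ \stensor UX^-$ telescopes, with $\Delta(\phi_{j,0}^{\pm}) = \phi_{j,0}^{\pm}\stensor \phi_{j,0}^{\pm}$ as base, into $\sum_{s=0}^n \phi_{j,\pm s}^{\pm}\stensor \phi_{j,\pm(n-s)}^{\pm}$.

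The main obstacle will be the bookkeeping, of two sorts. First, one must verify that each truncation subspace is genuinely compatible with the bracket, i.e. that the error terms discarded at one stage stay inside the prescribed subspace after being bracketed with $\Delta(h_{i,\pm 1})$ at the next; this requires short stability lemmas describing how $UX^- \stensor U(X^+)^2$ and its companions behave under left and right multiplication and under the super-commutator. Second, since $\alpha_M$ is odd, every bracket involving $X_{M,\cdot}^{\pm}$ is a super-commutator and the Koszul signs must be tracked throughout; it is precisely the matching of these signs against the value $z_i^{\pm} = \pm(q_i - q_i^{-1})$ of Lemma \ref{lem: coproduct Drinfel'd zero generators} that forces the coefficient $1$ in front of the new $\phi$-term. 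Once these are settled, the three inductions close and the proposition follows.
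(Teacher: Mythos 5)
Your proposal follows essentially the same route as the paper's own proof: the same induction on the loop degree $n$, driven by bracketing with $\Delta(h_{j-1,\pm 1})$ from Lemma \ref{lem: coproduct Drinfel'd zero generators}, with the pinned-down coefficient $z_{j-1}^{\pm} = \pm(q_{j-1}-q_{j-1}^{-1})$ and Relation \eqref{rel: triangular2} generating the new term $K_j^{-1}\phi_{j,\pm(n+1)}^{\pm}\stensor X_{j,0}^{+}$, the other cross terms dying in the truncation subspaces, and part (c) obtained by writing $\phi_{j,\pm n}^{\pm}$ as a bracket of $X^{+}$ with $X^{-}$ and substituting (a)--(b). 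If anything you are more explicit than the paper about the boundary node $j=1$ and the odd node $j=M$, which the paper's proof covers only with \lq\lq similar in other cases\rq\rq.
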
 
\begin{proof}
Note that Relation \eqref{rel: triangular1} implies $[h_{i, s}, X_{i+1,n}^{\pm}] = \mp (-1)^{p(\alpha_i)} X_{i+1,n+s}^{\pm}$ for $s = \pm 1$. One can prove the first four formulae by induction on $n$, using repeatedly the above lemma. Take the first formula as an example. Assume that $M+2 \leq j \leq M+N-1$ (similar in other cases). Suppose we know the formula for $\Delta(X_{j,n}^+)$. Then $\Delta(X_{j,n+1}^+) = -[\Delta(h_{j-1},1), \Delta(X_{j,n}^+)]$. Remark that for $i \neq j$ 
\begin{displaymath}
[X_{j,n}^+ \stensor K_j^{-1}, X_{i,1}^- K_i^{-1} \stensor K_i X_{i,0}^+] = [X_{j,n}^+, X_{i,1}^-] K_i^{-1} \stensor K_iK_j^{-1} X_{i,0}^+ = 0.
\end{displaymath}  
Hence, modulo $UX^- \stensor U(X^+)^2$, (recall that $q_{j-1} = q_j = q^{-1}$)
\begin{eqnarray*}
\Delta(X_{j,n+1}^+)  &\equiv & [1 \stensor X_{j,n}^+ + X_{j,n}^+ \stensor K_j^{-1} + \sum_{s=1}^n K_j^{-1}\phi_{j,s}^+ \stensor X_{j,n-s}^+,\\
&\ & 1 \stensor h_{j-1,1} + h_{j-1,1} \stensor 1 + (q^{-1}-q) X_{j,1}^-K_j^{-1} \stensor K_j X_{j,0}^+ ] \\
&\equiv & 1 \stensor X_{j,n+1}^+ + X_{j,n+1}^+ \stensor K_j^{-1} + \sum_{s=1}^n K_j^{-1} \phi_{j,s}^+ \stensor X_{j,n+1-s}^+ + (q^{-1}-q) [X_{j,n}^+, X_{j,1}^-] K_j^{-1} \stensor X_{j,0}^+ \\
&\equiv & 1 \stensor X_{j,n+1}^+ + X_{j,n+1}^+ \stensor K_j^{-1} + \sum_{s=1}^{n+1} K_j^{-1} \phi_{j,s}^+ \stensor X_{j,n+1-s}^+,
\end{eqnarray*} 
as desired. For the last formula, we use  $\frac{ \phi_{j,n}^+}{q_j - q_j^{-1}} = [X_{j,n}^+,X_{j,0}^-], \frac{\phi_{j,-n}^-}{q_j^{-1}-q_j} = [X_{j,0}^+, X_{j,-n}^-]$ and conclude.
\end{proof}
As in the case of quantum affine algebras \cite[Proposition 4.3]{CP2}, we get
\begin{cor}  \label{cor: tensor product highest weight}
For $(\underline{P},f,c), (\underline{Q},g,d) \in \mathcal{R}_{M,N}$, there exists a morphism of $\Ud$-modules 
\begin{displaymath}
\Verma(\underline{PQ}, f*g, cd) \longrightarrow \Verma(\underline{P},f,c) \stensor \Verma(\underline{Q},g,d)
\end{displaymath}
such that $v_{(\underline{PQ},f*g,cd)} \mapsto v_{(\underline{P},f,c)} \stensor v_{(\underline{Q},g,d)}$. Here $f * g = \frac{f^+ g^+ - f^- g^-}{q-q^{-1}}$ with $f^{\pm} = c^{\pm 1} \pm (q-q^{-1}) \sum\limits_{s=1}^{\infty} f_{\pm s} z^{\pm s}$ and  $(\underline{PQ})_i = P_i Q_i$ for $1 \leq i \leq M+N-1, i \neq M$.
\end{cor}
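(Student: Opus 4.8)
The plan is to realize the desired map through the universal property of the Verma module. Endow $W := \Verma(\underline{P},f,c) \stensor \Verma(\underline{Q},g,d)$ with the $\Ud$-module structure coming from the coproduct $\Delta$ of the Hopf superalgebra structure of Theorem \ref{thm: two presentations}, and set $w := v_{(\underline{P},f,c)} \stensor v_{(\underline{Q},g,d)} \in W$. Since both highest weight vectors are even, $w$ is an even nonzero vector. It then suffices to check that $w$ satisfies the defining relations \eqref{rel: highest weight vector 1}--\eqref{rel: hwv3} of $\Verma(\underline{PQ}, f*g, cd)$; the universal property of the latter then yields a unique morphism of $\Ud$-modules $v_{(\underline{PQ},f*g,cd)} \mapsto w$, which is what we want. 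Throughout, the crucial simplification is that $v_{(\underline{P},f,c)}$ and $v_{(\underline{Q},g,d)}$ are even, so the super sign in the action $(a \stensor b)(v \stensor v') = (-1)^{|b||v|} av \stensor bv'$ is always trivial.

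For Relation \eqref{rel: highest weight vector 1}, I would apply Proposition \ref{prop: coproduct formulas Drinfel'd}(a): modulo $U X^- \stensor U (X^+)^2$ each summand of $\Delta(X_{j,n}^+)$ carries a positive generator in one of the two tensor slots, and the omitted terms end in $(X^+)^2$ on the right. Evaluating on $w$ and using $X_{i,m}^+ v_{(\underline{P},f,c)} = 0 = X_{i,m}^+ v_{(\underline{Q},g,d)}$, every term vanishes, giving $X_{j,n}^+ w = 0$; the case $n = 0$ is immediate from the exact formula $\Delta(X_{j,0}^+) = 1 \stensor X_{j,0}^+ + X_{j,0}^+ \stensor K_j^{-1}$.

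The Cartan relations are where the content lies. From $\Delta(K_i) = K_i \stensor K_i$ one gets $K_M w = cd\, w$ at once. For the $\phi$-eigenvalues I would use Proposition \ref{prop: coproduct formulas Drinfel'd}(c), whose modulo terms again annihilate $w$ because a positive generator lands on one of the highest weight vectors; in generating-series form it says that the $\phi_i^{\pm}$-eigenvalue on $w$ is the \emph{product} of the two individual eigenvalue series. For $i \neq M$ this multiplies $q_i^{\deg P_i}\frac{P_i(zq_i^{-1})}{P_i(zq_i)}$ by its $Q_i$-analogue to give $q_i^{\deg(P_iQ_i)}\frac{(P_iQ_i)(zq_i^{-1})}{(P_iQ_i)(zq_i)}$, matching $(\underline{PQ})_i = P_iQ_i$ in \eqref{rel: hwv2}. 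For the odd node $i = M$ I would split the single relation \eqref{rel: hwv3} into its two generating-series components $\phi_M^+(z)$ and $\phi_M^-(z)$ and apply the coproduct to each separately: on $w$ they act by $f^+g^+$ and $f^-g^-$ respectively, so $\frac{1}{q-q^{-1}}\sum_n (\phi_{M,n}^+ - \phi_{M,n}^-)z^n$ acts by $\frac{f^+g^+ - f^-g^-}{q-q^{-1}} = (f*g)(z)$, which is exactly \eqref{rel: hwv3}; comparing constant terms reproduces $(f*g)_0 = \frac{cd - (cd)^{-1}}{q-q^{-1}}$.

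I expect the main obstacle to be precisely this book-keeping at the odd node: one cannot treat $\frac{\phi_M^+ - \phi_M^-}{q-q^{-1}}$ as a single group-like series, but must separate it into the $\phi_M^+$ (powers $\geq 0$) and $\phi_M^-$ (powers $\leq 0$) parts, use that each is group-like modulo lower terms by Proposition \ref{prop: coproduct formulas Drinfel'd}(c), and recombine. A final routine point is to confirm $(\underline{PQ}, f*g, cd) \in \mathcal{R}_{M,N}$: writing $f^{\pm}, g^{\pm}$ as the two expansions of rational functions whose denominators are the annihilating polynomials of $f$ and $g$, the products $f^+g^+$ and $f^-g^-$ are the two expansions of a single rational function, so $f*g$ is annihilated by the product of those polynomials.
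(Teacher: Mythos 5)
Your proposal is correct and is essentially the paper's own argument: the paper deduces this corollary from Proposition \ref{prop: coproduct formulas Drinfel'd} \lq\lq as in the case of quantum affine algebras\rq\rq\ (Chari--Pressley), i.e.\ by evaluating those coproduct formulas on $v_{(\underline{P},f,c)} \stensor v_{(\underline{Q},g,d)}$ (the modulo terms vanish on a tensor product of highest weight vectors) and invoking the universal property of $\Verma(\underline{PQ},f*g,cd)$. Your explicit handling of the odd node --- splitting Relation \eqref{rel: hwv3} into the group-like series $\phi_M^{+}$ and $\phi_M^{-}$ acting by $f^{\pm}$, $g^{\pm}$ --- and your check that $f*g$ is again annihilated by a nonzero polynomial merely spell out details the paper leaves implicit, so the route is the same.
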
 
The corollary above endows $\mathcal{R}_{M,N}$ with a structure of monoid (valid even if $M=N$): 
\begin{displaymath}
*: \mathcal{R}_{M,N} \times \mathcal{R}_{M,N} \longrightarrow \mathcal{R}_{M,N}, \ \ (\underline{P},f,c) * (\underline{Q},g,d) = (\underline{PQ}, f * g, cd)
\end{displaymath}
where the neutral element is $(\underline{1}, 0, 1)$. From the commutativity of $(\mathcal{R}_{M,N},*)$ we  also see that if the tensor product $S_1 \stensor S_2$ of two finite-dimensional simple $\Ud$-modules remains simple, then so does $S_2 \stensor S_1$ and $S_1 \stensor S_2 \cong S_2 \stensor S_1$ as $\Ud$-modules. 
\subsection{Evaluation morphisms}  \label{sec: 5.2}
In this section, we construct some simple modules of $\Ud$ via evaluation morphisms.
  
As in the case of quantum affine algebras $U_q(\widehat{\mathfrak{sl}_n})$, we also have evaluation morphisms:  
\begin{prop}  \label{prop: evaluation morphisms Chevalley}
There exists a morphism of superalgebras $\mathrm{ev}: U_q'(\widehat{\mathfrak{sl}(M,N)}) \longrightarrow U_q(\mathfrak{gl}(M,N))$ such that $\mathrm{ev}(K_j) = t_j^{l_j} t_{j+1}^{-l_{j+1}},\mathrm{ev}( E_j^{\pm}) =  e_j^{\pm}$ for $1 \leq j \leq M+N-1$, $\mathrm{ev}(K_0) = t_1^{-1} t_{M+N}^{-1}$ and
\begin{align*}
& \mathrm{ev}(E_0^+) = - (-q)^{N-M} [\cdots [e_1^-, e_2^-]_q,\cdots, e_{M+N-1}^-]_{q_{M+N-1}} t_1 t_{M+N}^{-1}, \\
& \mathrm{ev} (E_0^-) = t_1^{-1} t_{M+N} [\cdots [e_1^+, e_2^+]_q,\cdots, e_{M+N-1}^+]_{q_{M+N-1}}.
\end{align*}
\end{prop}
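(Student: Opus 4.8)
The plan is to verify that the proposed assignment respects every defining relation of $\Uc$, since that superalgebra is given by a presentation. First I would dispose of the relations involving only the finite generators $E_i^{\pm}, K_i^{\pm 1}$ with $1 \leq i \leq M+N-1$: under $\mathrm{ev}$ these are sent to $e_i^{\pm}$ and $t_i^{l_i} t_{i+1}^{-l_{i+1}}$, which generate the sub-Hopf-superalgebra $U_q(\mathfrak{sl}(M,N)) \subseteq U_q(\mathfrak{gl}(M,N))$. Comparing the two presentations, the corresponding Cartan relations, the Serre relations for $i \neq 0,M$, and the higher Serre relation around $M$ hold verbatim. Everything therefore reduces to the relations attached to the affine node $0$.

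To organise the remaining computations I abbreviate the long brackets
\[ e^{\pm} := [\cdots[e_1^{\pm}, e_2^{\pm}]_q, \cdots, e_{M+N-1}^{\pm}]_{q_{M+N-1}}, \]
so that $\mathrm{ev}(E_0^-) = t_1^{-1} t_{M+N}\, e^+$ and $\mathrm{ev}(E_0^+) = -(-q)^{N-M}\, e^-\, t_1 t_{M+N}^{-1}$; since $M>1$ the element $e^+$ coincides with the one appearing before Lemma \ref{lem: Yamane commutation relations}. A straightforward induction on the length of the bracket, using the relations $t_i e_j^{\pm} t_i^{-1} = q^{\pm l_i(\epsilon_i, \epsilon_j - \epsilon_{j+1})} e_j^{\pm}$, shows that $e^{\pm}$ is a weight vector of weight $\pm(\epsilon_1 - \epsilon_{M+N})$. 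From this weight, the definition $\mathrm{ev}(K_0) = t_1^{-1} t_{M+N}^{-1}$, and $c_{i,0} = -\delta_{i,1} + \delta_{i,M+N-1}$, the Cartan-type relations $K_i\, \mathrm{ev}(E_0^{\pm})\, K_i^{-1} = q^{\pm c_{i,0}}\, \mathrm{ev}(E_0^{\pm})$ and $\mathrm{ev}(K_0)\, e_j^{\pm}\, \mathrm{ev}(K_0)^{-1} = q^{\pm c_{0,j}} e_j^{\pm}$ follow by inspection, as does $[\mathrm{ev}(K_0), \mathrm{ev}(K_i)] = 0$.

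Next I would treat the off-diagonal relations at node $0$. The vanishing $[\mathrm{ev}(E_0^{\pm}), e_j^{\pm}] = 0$ for $c_{0,j} = 0$, the Serre relations $[[[E_1^{\pm}, E_0^{\pm}]_{q^{-1}}, E_{M+N-1}^{\pm}]_q, E_0^{\pm}] = 0$, and (for $(M,N) = (2,1)$) the degree-four relation all reduce to $q$-commutation identities between $e^{\pm}$ and the individual $e_j^{\pm}$. For $e^+$ these are precisely Lemma \ref{lem: Yamane commutation relations}; the corresponding statements for $e^-$ are obtained by transporting that lemma through the $\super$-algebra (anti)automorphism of $U_q(\mathfrak{gl}(M,N))$ interchanging $e_j^+$ and $e_j^-$. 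Likewise $[\mathrm{ev}(E_0^{\pm}), e_j^{\mp}] = 0$ for $j \geq 1$ follows because the only non-zero brackets $[e_i^+, e_j^-]$ are diagonal and get telescoped away inside the nested bracket defining $e^{\pm}$, the leftover Cartan factors contributing exactly the $q$-powers dictated by the weights.

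The crux, and the step I expect to be the main obstacle, is the Cartan relation $[\mathrm{ev}(E_0^+), \mathrm{ev}(E_0^-)] = \frac{\mathrm{ev}(K_0) - \mathrm{ev}(K_0)^{-1}}{q - q^{-1}}$ (recall $q_0 = q$). Here I would first commute the Cartan factors $t_1 t_{M+N}^{-1}$ and $t_1^{-1} t_{M+N}$ past the brackets, collecting explicit powers of $q$, and reduce to evaluating the super-bracket $[e^-, e^+]$. This bracket is computed by repeatedly inserting the diagonal relations $[e_i^+, e_j^-] = \delta_{i,j} \frac{t_i^{l_i} t_{i+1}^{-l_{i+1}} - t_i^{-l_i} t_{i+1}^{l_{i+1}}}{q_i - q_i^{-1}}$ and telescoping through the two nested brackets; all interior contributions cancel, leaving only a boundary term proportional to $t_1^{-1} t_{M+N}^{-1} - t_1 t_{M+N}$. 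The normalising scalar $-(-q)^{N-M}$ is tuned precisely so that, once the Cartan factors are restored, this boundary term becomes $\frac{\mathrm{ev}(K_0) - \mathrm{ev}(K_0)^{-1}}{q - q^{-1}}$. Keeping track of the signs produced by the odd generator $e_M^{\pm}$ (and by $e^{\pm}$ itself being odd) throughout this telescoping is the delicate point where the super-structure genuinely enters; everything else is bookkeeping of weights and of Serre identities already available inside $U_q(\mathfrak{gl}(M,N))$.
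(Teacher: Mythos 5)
Your overall strategy---checking every defining relation of $\Uc$, with the finite-rank relations landing verbatim inside $U_q(\mathfrak{sl}(M,N))$---is the natural one, and it is essentially what the paper does implicitly: the paper gives no detailed verification either, citing Zhang's formulas \cite[Lemma 4]{Zh1} and stating in the remark after the proposition that the one genuinely hard point, the degree-$5$ relations when $(M,N)=(2,1)$, is settled by ``a lengthy calculation''. However, your proposal misfires at exactly the places where the content lies. First, you attach the two computational tools to the wrong families of relations. Since $\mathrm{ev}(E_0^+)$ is built from the \emph{lowering} word $e^-=[\cdots[e_1^-,e_2^-]_q,\cdots,e_{M+N-1}^-]_{q_{M+N-1}}$ (times a Cartan factor), the same-sign relations at node $0$, namely $[E_0^+,E_j^+]=0$ for $c_{0,j}=0$ and the Serre-type relations involving $E_0^+$, become \emph{mixed} raising/lowering computations in $U_q(\mathfrak{gl}(M,N))$, which are the ones handled by telescoping the diagonal brackets $[e_i^+,e_j^-]$; conversely, the mixed relations $[E_0^+,E_j^-]=0$ and $[E_j^+,E_0^-]=0$ become \emph{same-sign} $q$-commutations of $e^{\mp}$ against $e_j^{\mp}$, which is where Lemma \ref{lem: Yamane commutation relations} (and its transport under the $+\leftrightarrow-$ automorphism) actually enters. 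Your text assigns the lemma to the former and the telescoping to the latter, i.e.\ exactly backwards. Note also that the lemma only covers $2\le j\le M+N-1$, so the relations $[E_1^+,E_0^-]=0$ and $[E_0^+,E_1^-]=0$ need a separate $q$-commutation of $e^{\pm}$ with $e_1^{\pm}$, which follows from the Serre relation at node $1$ and is not part of Lemma \ref{lem: Yamane commutation relations}.

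Second, and more seriously: the Serre relation $[[[E_1^{\pm},E_0^{\pm}]_{q^{-1}},E_{M+N-1}^{\pm}]_q,E_0^{\pm}]=0$ and the degree-$5$ relation for $(M,N)=(2,1)$ (which you call degree-four) each contain $E_0^{\pm}$ \emph{twice}. Under $\mathrm{ev}$ they therefore become identities involving two copies of the Cartan-dressed word $e^{\mp}$ interleaved with $e_1^{\pm},e_2^{\pm},e_{M+N-1}^{\pm}$, and such identities cannot ``reduce to $q$-commutation identities between $e^{\pm}$ and the individual $e_j^{\pm}$''; Lemma \ref{lem: Yamane commutation relations} does not dispose of them. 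This is precisely the gap the paper flags: Zhang did not verify the degree-$5$ relation in \cite{Zh1}, and closing it is the lengthy calculation the paper invokes without reproducing. Your plan instead identifies $[E_0^+,E_0^-]=\frac{K_0-K_0^{-1}}{q-q^{-1}}$ as the crux; that telescoping computation is indeed nontrivial but standard (it already occurs for $U_q(\widehat{\mathfrak{sl}_n})$), whereas the degree-$5$ relation is the step on which your argument, as written, would fail.
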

\begin{remark}
$\mathrm{ev}(E_0^{\pm})$ appeared implicitly in \cite[Lemma 4]{Zh1}. Note however that Zhang did not verify the degree 5 relations in the case $(M,N) = (2,1)$. A lengthy calculation shows that this is true, and that $\mathrm{ev}$ is always well-defined. The modified element $K = t_1 t_{M+N}^{-1}$ is needed to ensure that $K e_i^{\pm} K^{-1} = e_i^{\pm}$ for $2 \leq i \leq M+N-2$ and $K e_i^{\pm} K^{-1} = q^{\pm 1} e_i^{\pm}$ when $i = 1, M+N-1$. If $0 < |M-N| \leq 2$, then $K$ can be chosen so that $K \in U_q(\mathfrak{sl}(M,N))$.
\end{remark}
It is clear that $\mathrm{ev}(K_0\cdots K_{M+N-1}) = 1$. This implies that $\mathrm{ev}: U_q'(\widehat{\mathfrak{sl}(M,N)}) \longrightarrow U_q(\mathfrak{gl}(M,N))$ factorizes through $\Phi: U_q(\widehat{\mathfrak{sl}(M,N)}) \twoheadrightarrow \Ud$. Let $\mathrm{ev}': \Ud \longrightarrow U_q(\mathfrak{gl}(M,N))$ be the superalgebra morphism thus obtained.
\begin{lem} \label{lem: evaluation on Drinfel'd generators}
For the superalgebra morphism $\mathrm{ev}': \Ud \longrightarrow U_q(\mathfrak{gl}(M,N))$, we have
\begin{itemize}
\item[(a)] $\mathrm{ev}'(X_{1,n}^+) = t_1^{2n} e_1^+,\ \mathrm{ev}'(X_{1,n}^-) = e_1^- t_1^{2n}$; 
\item[(b)] for $2 \leq j \leq M+N-1$, modulo $\sum\limits_{i=1}^{M+N-1} U_q(\mathfrak{gl}(M,N)) e_i^+$, $\mathrm{ev}'(X_{j,n}^+)\equiv 0$ and 
\begin{displaymath}
\mathrm{ev}'(X_{j,n}^-) \equiv (\prod_{s=2}^j q_s^{-1})^n e_j^- t_j^{2 l_j n}. 
\end{displaymath}
\end{itemize} 
\end{lem}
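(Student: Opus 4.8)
The strategy is to transport everything to the affine Chevalley presentation via the defining identity $\mathrm{ev}' \circ \Phi = \mathrm{ev}$, so that the only data entering the computation are the closed formulas of Theorem~\ref{thm: two presentations} for $\Phi(E_i^{\pm})$ and the values $\mathrm{ev}(E_i^{\pm}) = e_i^{\pm}$, $\mathrm{ev}(K_j) = t_j^{l_j}t_{j+1}^{-l_{j+1}}$ of Proposition~\ref{prop: evaluation morphisms Chevalley}. As $\mathrm{ev}'$ is a morphism of superalgebras, it is enough to compute it on the mode-$0$ generators $X_{i,0}^{\pm}$, on the two boundary generators $X_{1,-1}^{+}$ and $X_{1,1}^{-}$, and then to propagate to all modes by means of the relation $[h_{i,s}, X_{j,n}^{\pm}] = \pm\frac{[sl_ic_{i,j}]_{q_i}}{s}X_{j,n+s}^{\pm}$ of \eqref{rel: triangular1} taken at $s = \pm 1$.

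Part (a). The mode-$0$ case is $\mathrm{ev}'(X_{i,0}^{\pm}) = \mathrm{ev}(E_i^{\pm}) = e_i^{\pm}$. For the boundary generators I would use the identities $\Phi(E_0^-) = \Phi(K_0^{-1})X_{\alpha_1+\cdots+\alpha_{M+N-1}}(-1)$ and $\Phi(E_0^+) = (-1)^{M+N-1}q^{N-M}X^{-}_{\alpha_1+\cdots+\alpha_{M+N-1}}(1)\Phi(K_0)$ recorded after Theorem~\ref{thm: two presentations}, whose right-hand sides are iterated quantum brackets beginning with $X_{1,-1}^{+}$, respectively $X_{1,1}^{-}$ (Definition~\ref{def: positive root vectors}). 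Bracketing $E_0^{\mp}$ successively against $X_{M+N-1,0}^{\mp}, \ldots, X_{2,0}^{\mp}$ peels off the outer factors one at a time through \eqref{rel: triangular2} (the inner bracket commuting with the generator removed), isolating $X_{1,-1}^{+}$ and $X_{1,1}^{-}$ as explicit brackets of affine Chevalley generators. Applying $\mathrm{ev}'$ and using $\mathrm{ev}'(\Phi(K_0^{\pm 1})) = \mathrm{ev}(K_0^{\pm 1})$ together with the telescoping $\mathrm{ev}'(K_1\cdots K_{M+N-1}) = t_1 t_{M+N}$, one gets $\mathrm{ev}'(X_{1,-1}^{+}) = t_1^{-2}e_1^{+}$ and $\mathrm{ev}'(X_{1,1}^{-}) = e_1^{-}t_1^{2}$. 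Reading $h_{1,1} = K_1^{-1}[X_{1,0}^{+}, X_{1,1}^{-}]$ and $h_{1,-1} = K_1[X_{1,-1}^{+}, X_{1,0}^{-}]$ off \eqref{rel: triangular2} and the generating series of $\phi_{1,\pm 1}^{\pm}$, a short calculation gives $\mathrm{ev}'(h_{1,1}) = t_1 t_2 (e_1^{+}e_1^{-} - q^{2}e_1^{-}e_1^{+})$ and a parallel expression for $h_{1,-1}$. Each commutes with $t_1$ and satisfies $[\mathrm{ev}'(h_{1,1}), e_1^{+}] = [2]_q t_1^{2}e_1^{+}$, whence $\mathrm{ev}'(X_{1,n+1}^{+}) = \frac{1}{[2]_q}[\mathrm{ev}'(h_{1,1}), \mathrm{ev}'(X_{1,n}^{+})] = t_1^{2(n+1)}e_1^{+}$; the negative generators and the downward induction with $h_{1,-1}$ are identical, and induction on $|n|$ completes (a).

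Part (b). Here I work modulo the left ideal $I := \sum_{i=1}^{M+N-1}U_q(\mathfrak{gl}(M,N))e_i^{+}$, the point being that any monomial carrying some $e_i^{+}$ on the right lies in $I$, and that a torus factor may always be commuted to the left of an $e_i^{+}$ at the cost of a scalar, so that $e_i^{+}\cdot(\text{torus}) \in I$. Since $X_{j,0}^{+}\mapsto e_j^{+}\in I$, the $X^{+}$-assertion holds in mode $0$. The weight $t_j^{2l_jn}$ and the scalar $\prod_{s=2}^{j}q_s^{-1}$ are produced by a secondary induction on $j$ at modes $\pm 1$: given $\mathrm{ev}'(h_{j-1,\pm 1})$, I shift $X_{j,0}^{-}$ by $[h_{j-1,\pm 1}, -]$ (using $c_{j-1,j}\neq 0$, and a neighbour on the other side when $j=M$, where $c_{M,M}=0$), reduce the commutator modulo $I$ to obtain $\mathrm{ev}'(X_{j,\pm 1}^{-}) \equiv (\prod_{s=2}^{j}q_s^{-1})^{\pm 1}e_j^{-}t_j^{\pm 2l_j}$, and likewise $\mathrm{ev}'(X_{j,\pm 1}^{+})\equiv 0$; then the identities $h_{j,1} = K_j^{-1}[X_{j,0}^{+}, X_{j,1}^{-}]$ and $h_{j,-1} = K_j[X_{j,-1}^{+}, X_{j,0}^{-}]$ advance the induction to $j+1$. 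With modes $\pm 1$ in hand, the commutator $[\mathrm{ev}'(h_{k,1}), -]$ preserves $I$ (the stray $e_i^{+}$ inside $\mathrm{ev}'(h_{k,1})$ recombines, modulo $I$, with a mode-$1$ generator already known to lie in $I$), so $\mathrm{ev}'(X_{j,n}^{+})\in I$ for all $n$, while the $X^{-}$-formula propagates in $n$ exactly as in (a), multiplying by $\prod_{s=2}^{j}q_s^{-1}$ and by $t_j^{2l_j}$ at each step.

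The main obstacle is bookkeeping rather than any conceptual point. The two places demanding care are: the bracket-peeling that extracts $X_{1,\pm 1}^{\mp}$ from the nested-bracket formulas for $\Phi(E_0^{\pm})$, where the order of the brackets and the accumulated Cartan and scalar factors must be tracked precisely to land on the clean answers $t_1^{-2}e_1^{+}$ and $e_1^{-}t_1^{2}$; and the reductions modulo $I$ in (b), which are legitimate only because $I$ is a left ideal stable under left multiplication, so that one must always commute the surviving $e_i^{+}$ to the right before discarding, and then verify that the genuinely non-$I$ remainder is exactly the asserted monomial. The parity of the odd root $\alpha_M$ — responsible both for the super-signs in the quantum brackets and for the degeneracy $c_{M,M}=0$ that forces the use of a neighbouring $h$ in the mode shift — is where the super-structure genuinely intervenes and must be checked separately.
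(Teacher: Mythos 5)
Your proposal is correct and takes essentially the same route as the paper's proof, which likewise extracts $\mathrm{ev}'(h_{1,\pm 1})$ from the expressions of $h_{i,\pm 1}$ as iterated quantum brackets of Chevalley generators (the Appendix B formulae \eqref{equ: h_{i,1}}--\eqref{equ: h_{i,-1}}, i.e.\ exactly your bracket-peeling) and then propagates in $n$ and $j$ via Relations \eqref{rel: triangular1}--\eqref{rel: triangular2}. In particular your value $t_1t_2(e_1^+e_1^- - q^{2}e_1^-e_1^+)$ agrees with the paper's $(1-q^{2})\,e_1^-(t_1t_2)\,e_1^+ + \frac{t_1^{2}-t_2^{2}}{q-q^{-1}}$ (since $t_1t_2$ commutes with $e_1^{\pm}$), and your mod-$I$ bookkeeping for part (b) is precisely the content of the paper's terse \lq\lq the rest is clear\rq\rq.
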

\begin{proof}
According to the Formulae \eqref{equ: h_{i,1}}-\eqref{equ: h_{i,-1}} in Appendix B, we get 
\begin{displaymath}
\mathrm{ev}'(h_{1,\pm 1}) = (1-q^{\pm 2}) e_1^- (t_1t_2)^{\pm 1} e_1^+ \pm \frac{t_1^{\pm 2} - t_2^{\pm 2}}{q-q^{-1}}. 
\end{displaymath}
The rest is clear in view of Relations \eqref{rel: triangular1}-\eqref{rel: triangular2}. 
\end{proof}
Remark that from  Definition \ref{def: Drinfeld presentation} of quantum affine superalgebras $\Ud$ admit naturally a $\BZ$-grading provided by the second index (the first index gives $Q_{M,N}$-grading). From this $\BZ$-grading, we construct for each $a \in \mathbb{C} \setminus \{0\}$, a superalgebra automorphism $\Phi_a: \Ud \longrightarrow \Ud$ defined by:  
\begin{equation*}
X_{i,n}^{\pm} \mapsto a^n X_{i,n}^{\pm},\ h_{i,s} \mapsto a^s h_{i,s}, K_i^{\pm 1} \mapsto K_i^{\pm 1} 
\end{equation*}
for $1 \leq i \leq M+N-1, n \in \BZ, s \in \BZ_{\neq 0}$. Furthermore, define the {\it evaluation morphism} $\mathrm{ev}_a$ by 
\begin{equation}
\mathrm{ev}_a := \mathrm{ev}' \circ \Phi_a : \Ud  \longrightarrow U_q(\mathfrak{gl}(M,N)). 
\end{equation} 

Given a representation $(\rho, V)$ of $U_q(\mathfrak{gl}(M,N))$, one can construct a family $(\rho \circ \mathrm{ev}_a, \mathrm{ev}_a^* V : a \in \mathbb{C} \setminus \{0\})$ of representations of $\Ud$. In particular, one obtains some finite-dimensional simple modules in this way. Take $a \in \mathbb{C}\setminus \{0\}$. Lemma \ref{lem: evaluation on Drinfel'd generators} together with Theorem \ref{thm: Kac modules for quantum superalgebra} leads to the following proposition: 
 
\begin{prop}   \label{prop: evaluation modules}
Let $\Lambda \in \mathcal{S}_{M,N}$ and $a \in \mathbb{C}\setminus \{0\}$. Then $\mathrm{ev}_a^* L(\Lambda) \cong \iota^* \simple'(\underline{P},f,c)$ as simple $\Ud$-modules, where
\begin{align}
& P_i(z) = \prod_{s=1}^{\Delta_i} (1 - q_i^{1-2 s} a \theta_i q_i^{2 \Lambda_i} z) \ \ \ \mathrm{for}\ 1 \leq i \leq M+N-1, i \neq M, \\
& c = q^{\Lambda_M + \Lambda_{M+1}},\ \ f(z) = \frac{c - c^{-1}}{q-q^{-1}}\sum_{n \in \BZ} (a \theta_M q^{2 \Lambda_M} z)^n,  \\
& \theta_1 = 1, \ \ \theta_s = \prod_{i=2}^s q_i^{-1} \ \ \ \mathrm{for}\ 2 \leq s \leq M+N-1.
\end{align}
\end{prop}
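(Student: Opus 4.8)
The plan is to realize $\mathrm{ev}_a^* L(\Lambda)$ as a simple highest weight $\Ud$-module and to read off its highest weight from the explicit formulae of Lemma \ref{lem: evaluation on Drinfel'd generators}, so that the asserted isomorphism follows from the uniqueness of simple highest weight modules. Let $v_\Lambda \in L(\Lambda)$ be the highest weight vector, so that $e_j^+ v_\Lambda = 0$ and $t_i v_\Lambda = q^{\Lambda_i} v_\Lambda$; note $L(\Lambda)$ is finite-dimensional by Theorem \ref{thm: Kac modules for quantum superalgebra}. First I would check that $v_\Lambda$ is a highest weight vector for the $\Ud$-action obtained through $\mathrm{ev}_a$. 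Relation \eqref{rel: highest weight vector 1} is immediate: by Lemma \ref{lem: evaluation on Drinfel'd generators} each $\mathrm{ev}'(X_{i,n}^+)$ lies in the left ideal $\sum_j U_q(\mathfrak{gl}(M,N)) e_j^+$ (or equals $t_1^{2n}e_1^+$ when $i=1$), hence annihilates $v_\Lambda$. The Cartan eigenvalues give $K_i v_\Lambda = q^{l_i\Lambda_i - l_{i+1}\Lambda_{i+1}} v_\Lambda$, which forces $\deg P_i = \Delta_i$ for $i \neq M$ and $c = q^{\Lambda_M + \Lambda_{M+1}}$, matching the stated value of $c$.

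The heart of the argument is the computation of the $\phi_{i,n}^{\pm}$-eigenvalues, i.e. Relations \eqref{rel: hwv2} and \eqref{rel: hwv3}. The key device is to write, using Relation \eqref{rel: triangular2},
\[ \frac{\phi_{i,n}^+ - \phi_{i,n}^-}{q_i - q_i^{-1}} = [X_{i,0}^+, X_{i,n}^-], \]
and apply it to $v_\Lambda$; since $X_{i,0}^+ v_\Lambda = 0$, only the term $X_{i,0}^+ X_{i,n}^- v_\Lambda$ survives (both in the even and the odd case). Here the ordering is essential: evaluating $X_{i,n}^-$ first, Lemma \ref{lem: evaluation on Drinfel'd generators} gives $\mathrm{ev}_a(X_{i,n}^-) v_\Lambda = (a\theta_i q_i^{2\Lambda_i})^n e_i^- v_\Lambda$, because the remainder lying in $\sum_j U_q(\mathfrak{gl}(M,N)) e_j^+$ annihilates $v_\Lambda$ and $t_i^{2l_i n} v_\Lambda = q^{2 l_i n \Lambda_i} v_\Lambda$. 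Then $\mathrm{ev}_a(X_{i,0}^+) = e_i^+$ and the $U_q(\mathfrak{gl}(M,N))$-relation for $[e_i^+, e_i^-]$ yield $e_i^+ e_i^- v_\Lambda = \frac{t_i^{l_i} t_{i+1}^{-l_{i+1}} - t_i^{-l_i} t_{i+1}^{l_{i+1}}}{q_i - q_i^{-1}} v_\Lambda$. For $i \neq M$ this scalar equals $[\Delta_i]_{q_i}$, and summing the resulting geometric series in $n$ produces the rational function $q_i^{\Delta_i}\frac{1 - q_i^{-2\Delta_i} w_i z}{1 - w_i z}$ with $w_i = a\theta_i q_i^{2\Lambda_i}$, which I would verify equals $q_i^{\deg P_i}\frac{P_i(zq_i^{-1})}{P_i(zq_i)}$ for the claimed $P_i$ after telescoping the $q_i$-string product. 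For $i = M$ the scalar is instead $\frac{c - c^{-1}}{q - q^{-1}}$, independent of $n$, so the series collapses to $f(z) = \frac{c-c^{-1}}{q-q^{-1}}\sum_{n\in\BZ}(a\theta_M q^{2\Lambda_M} z)^n$, which is annihilated by $Q(z) = 1 - a\theta_M q^{2\Lambda_M} z \in 1 + z\BC[z]$; this confirms $(\underline{P}, f, c) \in \mathcal{R}_{M,N}$. I expect the main obstacle to be exactly this step: keeping the left-ideal remainders under control through the ordering trick, and correctly telescoping the $q_i$-string product into the rational function.

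Finally I would establish simplicity and conclude. Since $\mathrm{ev}_a(\Ud)$ contains $e_i^{\pm} = \mathrm{ev}_a(X_{i,0}^{\pm})$ and the $K_i$, it contains $U_q(\mathfrak{sl}(M,N))$; hence every $\Ud$-submodule of $L(\Lambda)$ is a $U_q(\mathfrak{sl}(M,N))$-submodule. A nonzero such submodule contains a weight vector, which can be raised to $v_\Lambda$ by the $e_i^+$, and then $U_q^-(\mathfrak{sl}(M,N)) v_\Lambda = L(\Lambda)$ (as $L(\Lambda) = U_q^-(\mathfrak{gl}(M,N)) v_\Lambda$ and the lowering part is generated by the $e_i^-$), so the submodule is everything; thus $\mathrm{ev}_a^* L(\Lambda)$ is a finite-dimensional simple $\Ud$-module. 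It is of highest weight $(\underline{P}, f, c)$ with trivial sign twist (all $\varepsilon_i = 1$ by the explicit form of the $\phi$-eigenvalues), so by Lemma \ref{lem: extension} together with the uniqueness statement in Proposition \ref{prop: highest weight representations}, it is isomorphic to $\iota^* \simple'(\underline{P}, f, c)$.
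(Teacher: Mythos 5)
Your proposal is correct and follows essentially the same route the paper intends: the paper's (terse) proof is precisely "Lemma \ref{lem: evaluation on Drinfel'd generators} together with Theorem \ref{thm: Kac modules for quantum superalgebra}," i.e. compute the action of the Drinfel'd currents on $v_\Lambda$ through $\mathrm{ev}_a$, read off $(\underline{P},f,c)$ from the $\phi_{i,n}^{\pm}$-eigenvalues, and conclude by simplicity of $L(\Lambda)$ and uniqueness of the simple highest weight module (using $M\neq N$, the standing assumption of \S\ref{sec: 5}). Your write-up supplies exactly the details the paper omits — the left-ideal ordering trick, the telescoping of the $q_i$-string into $q_i^{\Delta_i}\frac{1-q_i^{-2\Delta_i}w_iz}{1-w_iz}$, and the restriction-to-$U_q(\mathfrak{sl}(M,N))$ simplicity argument — all of which check out.
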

\begin{example}
 When $\Lambda_i  = \delta_{i,1}$, we get the fundamental representation of $U_q(\mathfrak{gl}(M,N))$ on the vector superspace $V = V_{\even} \oplus V_{\odd}$ with $\dim V_{\even} = M, \dim V_{\odd} = N$ (see for example \cite[\S 3.2]{BKK} for the actions of Chevalley generators). We obtain therefore  finite-dimensional  simple $\Ud$-modules corresponding to $(\underline{P}, 0, 1) \in \mathcal{R}_{M,N}$ where $P_i(z) = 1 - q \delta_{i,1} a z$ with $a \in \mathbb{C}\setminus \{0\}$.
\end{example}
\section{Further discussions}  \label{sec: final section}
\paragraph{Representations of quantum superalgebras.} 
As we have seen in \S \ref{sec: 5.2}, for $a \in \mathbb{C}\setminus \{0\}$ there exists a superalgebra homomorphism (assuming $M \neq N$)
\[ \mathrm{ev}_a: \Ud \longrightarrow U_q(\mathfrak{gl}(M,N)).  \]
One can pull back representations of $U_q(\mathfrak{gl}(M,N))$ to get those of $\Ud$. 

In 2000, Benkart-Kang-Kashiwara \cite{BKK} proposed  a subcategory $\category$ of finite-dimensional representations of $U_q(\mathfrak{gl}(M,N))$ over the field $\mathbb{Q}(q)$  to study the crystal bases. Using the notations in \S \ref{sec: 2.2}, one has also the finite-dimensional simple modules $L(\Lambda)$ for $\Lambda \in \mathbb{Z}^{M+N}$ verifying $\Lambda_i - \Lambda_{i+1}  \in \BZ_{\geq 0}$ for $1 \leq i \leq M+N-1, i \neq M$. Simple modules in $\category$ are of the form $L(\Lambda) \stensor D$ where $D \in \category$ is one-dimensional and   (Proposition 3.4)
\begin{itemize}
\item[(i)] $\Lambda_M + \Lambda_{M+N} \geq 0$;
\item[(ii)] if $\Lambda_{M+k} > \Lambda_{M+k+1}$ for some $1 \leq k \leq N-1$, then $\Lambda_M + \Lambda_{M+k+1} \geq k$.
\end{itemize}
The simple module $L(\Lambda)$ in $\category$ always admits a polarizable crystal base (Theorem 5.1), with the associate crystal  $B(\Lambda)$  being realised as the set of all semi-standard tableaux of shape $Y_{\Lambda}$ (Definition 4.1). Here $Y_{\Lambda}$ is a Young diagram constructed from $\Lambda$. In this way one gets a combinatorial description of the character and the dimension for the simple module $L(\Lambda)$. 

Explicit constructions of representations of the quantum superalgebra $U_q(\mathfrak{gl}(M,N))$ are also of importance to us. In \cite{KV}, Ky-Van constructed finite-dimensional representations of $U_q(\mathfrak{gl}(2,1))$ and studied their basis with respect to its even subalgebra $U_q(\mathfrak{gl}(2) \oplus \mathfrak{gl}(1))$. Early in \cite{Ky,KS}, certain finite-dimensional representations of $U_q(\mathfrak{gl}(2,2))$ were constructed  together with their decomposition into simple modules with respect to the subalgebra $U_q(\mathfrak{gl}(2) \oplus \mathfrak{gl}(2))$.  In 1991, Palev-Tolstoy \cite{PT} deformed the finite-dimensional Kac/simple modules of $U(\mathfrak{gl}(N,1))$ to the corresponding modules of $U_q(\mathfrak{gl}(N,1))$ and wrote down the actions of the algebra generators in terms of Gel'fand-Zetlin basis. Later, Palev-Stoilova-Van der Jeugt \cite{PSV} generalised the above constructions to the quantum superalgebra $U_q(\mathfrak{gl}(M,N))$. However, their methods applied only to a certain class of irreducible representations, the so-called {\it essentially typical representations}.   Recently,  the coherent state method was applied to construct representations of superalgebras and quantum superalgebras. In \cite{KKNV}, Kien-Ky-Nam-Van used the vector coherent state method to construct representations of $U_q(\mathfrak{gl}(2,1))$. However, for quantum superalgebras $U_q(\mathfrak{gl}(M,N))$ of higher ranks, the analogous constructions are still not explicit.

\paragraph{Relations with Yangians.} In the paper \cite{Zh3}, Zhang developed a highest weight theory for finite-dimensional representations of the super Yangian $Y(\mathfrak{gl}(M,N))$, and obtained a classification of finite-dimensional simple modules (Theorems 3, 4). Here the set $\mathcal{T}_{M,N}$ of highest weights consists of $\Lambda = (\Lambda_i: 1 \leq i \leq M+N)$ such that 
\begin{align*}
& \Lambda_i(z) \in (-1)^{[i]} + z^{-1} \mathbb{C}[z^{-1}]\ \ \ \ \mathrm{for}\ 1 \leq i \leq M+N,\\
& \frac{\Lambda_i(z)}{\Lambda_{i+1}(z)} = \frac{P_i(z + (-1)^{[i]})}{P_i(z)} \ \ \ \ \mathrm{for}\ 1 \leq i \leq M+N-1, i\neq M,\\
& \frac{\Lambda_M(z)}{\Lambda_{M+1}(z)} = - \frac{\tilde{Q}(z)}{Q(z)}
\end{align*} 
where $Q,\tilde{Q} \in 1 + z^{-1}\mathbb{C}[z^{-1}]$ are co-prime  polynomials, $P_i(z) \in \mathbb{C}[z]$ are polynomials of leading coefficient $1$, and $[i] = \begin{cases}
\even & i \leq M, \\
\odd & i > M.
\end{cases}$\ For any $\Lambda \in \mathcal{T}_{M,N}$, there exists a finite-dimensional simple $Y(\mathfrak{gl}(M,N))$-module $\simple(\Lambda)$. Up to modification by one-dimensional modules, all simple modules are of the form $\simple(\Lambda)$.  Zhang also constructed explicitly the simple modules $\simple(\Lambda)$ for $\Lambda \in \mathcal{T}_{M,N}^0$:
\begin{align*}
 \Lambda_i(z) \in (-1)^{[i]} + \mathbb{C}z^{-1} \ \ \ \ \mathrm{for}\ 1 \leq i \leq M+N.
\end{align*} 
Other simple modules $\simple(\Lambda)$ can always be realised as subquotients of $\bigotimes\limits_{s=1}^n \phi_s^* \simple(\Lambda_{(s)})$ where $\Lambda_{(s)} \in \mathcal{T}_{M,N}^0$ and the $\phi_s$ are some superalgebra automorphisms of $Y(\mathfrak{gl}(M,N))$.

In the case of quantum affine superalgebra $\Ud$, the set of highest weights is $\mathcal{R}_{M,N}$, which is a commutative monoid. It is not easy to see, in view of Definition \ref{def: highest weight} and Corollary \ref{cor: tensor product highest weight}, that as monoids
\begin{displaymath}
\mathcal{R}_{M,N} \cong (1 + z\mathbb{C}[z])^{M+N-2} \times \mathcal{R}_{1,1}. 
\end{displaymath}
In the following, we investigate the monoid structure of $\mathcal{R}_{1,1}$. As we shall see, $\mathcal{R}_{1,1}$ is almost $\mathcal{T}_{1,1}$. Thus, informally speaking, the two monoids $\mathcal{R}_{M,N}$ and $\mathcal{T}_{M,N}$ are almost equivalent, and  the finite-dimensional representation theories for $\Ud$ and for  $Y(\mathfrak{gl}(M,N))$ should have some hidden similarities.

Recall that $\mathcal{R}_{1,1}$ is the set of couples $(f,c)$ where $f$ is a formal series with torsion, with $\frac{c-c^{-1}}{q-q^{-1}}$ being the constant term. Let $\iota_{\pm}: \mathbb{C}[[z^{\pm 1}]] \longrightarrow \mathbb{C}[[z,z^{-1}]]$ be the canonical inclusions of formal series.
\begin{prop}
Let $\mathcal{T}$ be the set of triples $(c,Q,P)$ where: 
\begin{itemize}
\item[(a)] $c \in \mathbb{C} \setminus \{0\}$, $P(z),Q(z) \in 1 + z \mathbb{C}[z]$;
\item[(b)] $P(z), Q(z)$ are co-prime as polynomials, moreover, $\lim_{z\rightarrow \infty} \frac{Q(z)}{P(z)} = c^{-2}$.
\end{itemize}
Equip $\mathcal{T}$ with a structure of monoid by: $(c,Q,P) * (c',Q',P') = (cc',QQ',PP'), 1 = (1,1,1)$. Then
\begin{align}
\iota_{+,-}: \mathcal{T} \longrightarrow \mathcal{R}_{1,1}, \ \ (c,Q,P) \mapsto (\frac{1}{q-q^{-1}}\iota_+(c \frac{Q(z)}{P(z)}) - \frac{1}{q-q^{-1}} \iota_-(c \frac{Q(z)}{P(z)}), c)  \label{equ: series to polynomials}
\end{align}
defines an isomorphism of monoids.
\end{prop}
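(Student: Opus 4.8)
The plan is to factor $\iota_{+,-}$ through the multiplicative group $\mathcal{G}$ of rational functions that are regular and non-vanishing at $0$ and $\infty$ with $R(0)R(\infty)=1$. Via $(c,Q,P)\mapsto R:=cQ/P$, the normalisations $Q(0)=P(0)=1$ together with coprimality identify $\mathcal{T}$ with $\mathcal{G}$: indeed $R(0)=c$, and the condition $\lim_{z\to\infty}Q/P=c^{-2}$ says exactly $R(\infty)=c^{-1}$, i.e. $R(0)R(\infty)=1$. Under this identification the operation $*$ on $\mathcal{T}$ is ordinary multiplication of rational functions. The key dictionary I would establish is that $f^{+}=\iota_+(R)$ and $f^{-}=\iota_-(R)$, where $f^{\pm}$ are as in Corollary~\ref{cor: tensor product highest weight}; this is a coefficientwise check using $R(0)=c$ and $R(\infty)=c^{-1}$. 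For well-definedness, fix $(c,Q,P)\in\mathcal{T}$ and $R=cQ/P$: since $P(0)=1$ the series $\iota_\pm(R)$ are genuine expansions and $P(z)\iota_\pm(R)=cQ(z)$ is a polynomial, so $P(z)\bigl(\iota_+(R)-\iota_-(R)\bigr)=0$ in $\mathbb{C}[[z,z^{-1}]]$. Thus the first coordinate of the image is annihilated by $P$ (condition (a) of Definition~\ref{def: highest weight}), and reading off its $z^0$-coefficient gives constant term $\frac{c-c^{-1}}{q-q^{-1}}$ (condition (b)); hence $\iota_{+,-}$ maps into $\mathcal{R}_{1,1}$.

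For the homomorphism property, I would use that $\iota_+$ and $\iota_-$ are ring homomorphisms on rational functions regular at $0$, respectively at $\infty$, so $\iota_\pm(RS)=\iota_\pm(R)\iota_\pm(S)$. Writing $(f,c),(g,d)$ for the images of $(c,Q,P),(c',Q',P')$, the dictionary gives $f^+g^+=\iota_+(RS)$ and $f^-g^-=\iota_-(RS)$, so the formula $f*g=\frac{f^+g^+-f^-g^-}{q-q^{-1}}$ shows $(f,c)*(g,d)$ is the image of $RS=(cc')\frac{QQ'}{PP'}$, that is, of $(c,Q,P)*(c',Q',P')$. For injectivity, equal images force $c=c'$ and $\iota_+(R-R')=\iota_-(R-R')$; since the left side lies in $\mathbb{C}[[z]]$ and the right in $\mathbb{C}[[z^{-1}]]$, both equal a single constant, so $R-R'$ is that constant; evaluating at $\infty$ with $R(\infty)=R'(\infty)=c^{-1}$ makes it $0$, and coprimality with $Q(0)=P(0)=1$ then yields $(c,Q,P)=(c',Q',P')$.

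The bulk of the work is surjectivity. Given $(f,c)\in\mathcal{R}_{1,1}$, I would choose a nonzero polynomial $P_0$ with $P_0 f=0$; since multiplication by $z$ is injective on $\mathbb{C}[[z,z^{-1}]]$ we may assume $P_0(0)=1$. From $(q-q^{-1})f=\iota_+(f^+)-\iota_-(f^-)$ and $P_0f=0$ we get $P_0\,\iota_+(f^+)=P_0\,\iota_-(f^-)$; the first is supported in degrees $\ge 0$ and the second in degrees $\le\deg P_0$, so their common value is a polynomial $G$ with $\deg G\le\deg P_0$. Consequently $f^+=\iota_+(G/P_0)$ and $f^-=\iota_-(G/P_0)$, so $f^\pm$ are the two expansions of $R:=G/P_0$. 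The constant-term condition forces $G(0)=c$ and $R(\infty)=c^{-1}$, whence $\deg G=\deg P_0$; setting $P:=P_0$, $Q:=G/c$ and cancelling the common factors of $Q$ and $P$ (normalised to constant term $1$, which is possible since every such factor is nonzero at $0$) produces an element $(c,Q,P)\in\mathcal{T}$ whose image is $(f,c)$.

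The hard part is precisely this reconstruction of the rational function $R$ from the torsion series $f$, and in particular the step identifying $P_0 f^+$ and $P_0 f^-$ with one and the same polynomial $G$; once $f^\pm=\iota_\pm(R)$ is in hand, the homomorphism property, injectivity, and well-definedness are all formal. A secondary subtlety worth flagging is that the product on $\mathcal{T}$ must be read modulo cancellation of common factors — equivalently through the identification $\mathcal{T}\cong\mathcal{G}$ — since $QQ'$ and $PP'$ need not remain coprime even when $Q,P$ and $Q',P'$ are.
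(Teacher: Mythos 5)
Your proof is correct, and while it rests on the same key dictionary as the paper's — identifying the pair $(f,c)$ with the two expansions $\iota_\pm(R)$ of the rational function $R = cQ/P$, so that $f^{\pm}$ of Corollary \ref{cor: tensor product highest weight} become $\iota_\pm(R)$ — the execution differs in several genuine ways. For surjectivity, the paper takes $P$ to be the \emph{minimal-degree} annihilator of $f$ and explicitly computes the coefficients of $Q := c^{-1}P(z)\,f^+(z)$, using the linear recurrence satisfied by the $f_n$ to show $Q$ is a polynomial of degree $\deg P$ with the correct leading behaviour; coprimality then follows from minimality of $P$. You instead start from an \emph{arbitrary} annihilator $P_0$, observe that $P_0\,\iota_+(f^+) = P_0\,\iota_-(f^-)$ is supported simultaneously in degrees $\geq 0$ and $\leq \deg P_0$, hence is a polynomial $G$, and recover coprimality at the end by cancelling common factors. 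This support argument replaces the paper's coefficient computation and is arguably cleaner, at the cost of being less explicit about how $(Q,P)$ is computed from $f$ (the paper's version makes the uniqueness of $(Q,P)$ — hence injectivity — nearly immediate, which is why it can dismiss injectivity in one line, whereas you supply a separate and self-contained injectivity argument via $\iota_+(R-R') = \iota_-(R-R')$ forcing $R - R'$ to be a constant, then zero). Your treatment of the homomorphism property, via multiplicativity of the expansion maps $\iota_\pm$ on rational functions, also makes explicit what the paper compresses into a citation of Corollary \ref{cor: tensor product highest weight}. Finally, your remark that the operation $(c,Q,P)*(c',Q',P') = (cc',QQ',PP')$ is not literally closed on $\mathcal{T}$ (since $QQ'$ and $PP'$ may fail to be coprime) and must be read modulo cancellation — equivalently through the identification of $\mathcal{T}$ with rational functions $R$ satisfying $R(0)R(\infty)=1$ — addresses a genuine imprecision in the paper's statement that its own proof passes over in silence; this is a point in favour of your formulation.
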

\begin{proof}
First, $\iota_{+,-}$ is well-defined, as the formal series in the RHS of \eqref{equ: series to polynomials} is killed by $P(z)$. Moreover, $\iota_{+,-}$ respects the monoid structures in view of Corollary \ref{cor: tensor product highest weight}.

Next, fix $(f,c) \in \mathcal{R}_{1,1}$. Let $P(z) \in 1 + z\mathbb{C}[z]$ be the Drinfel'd polynomial of smallest degree killing $f(z)$. Express
\begin{displaymath}
f(z) = \sum_{n} f_n z^n,\ \ P(z) = 1 + a_1 z + \cdots + a_d z^d \ \ \ \ \mathrm{with}\ a_d \neq 0. 
\end{displaymath}
Then, for all $n \in \BZ$,
\begin{displaymath}
f_{n+d} + f_{n+d-1} a_1 + \cdots + f_{n+1} a_{d-1} + f_n a_d = 0.
\end{displaymath}
Consider the formal power series 
\begin{displaymath}
Q(z) := c^{-1} P(z) (c + (q-q^{-1})\sum_{n>0}f_n z^n) = \sum_{s\geq 0} a_s' z^s \in \mathbb{C}[[z]].
\end{displaymath}
It is clear that $a_0' = 1$ and that
\begin{eqnarray*}
a_d' &=& a_d + c^{-1} (q-q^{-1}) (f_d + f_{d-1} a_1 + \cdots + f_1 a_{d-1}) \\
&=& a_d + c^{-1} (q-q^{-1}) (-f_0 a_d) = a_d (1 - c^{-1}(c-c^{-1})) = a_d c^{-2}, \\
a_s' &=& c^{-1}(q-q^{-1}) (f_s + f_{s-1}a_1 + \cdots + f_{s-d} a_d) = 0 \ \ \ \mathrm{for}\ s >  d.
\end{eqnarray*}
This says that $Q(z) \in 1 + z\mathbb{C}[z]$ is of degree $d$ and that $\lim_{z\rightarrow \infty} \frac{Q(z)}{P(z)} = c^{-2}$. We remark that $f(z)$ is completely determined by $f_0,f_1,\cdots,f_{d-1}$, which in turn are determined by $Q(z)$. This forces
\begin{displaymath}
(q-q^{-1})f(z) = \iota_+(c \frac{Q(z)}{P(z)}) - \iota_-(c \frac{Q(z)}{P(z)}).
\end{displaymath}
If $P(z) = P_1(z)P_2(z)$ and $Q(z) = P_1(z) Q_2(z)$, then $f(z)$ should be killed by $P_2(z)$. Hence, $\deg P_1(z) = 0$ from the definition of $P(z)$. This says that $P(z),Q(z)$ are co-prime. In other words,
\begin{displaymath}
(f(z), c) = \iota_{+,-}(c, Q(z), P(z)) \ \ \ \mathrm{with}\ (c,Q(z),P(z)) \in \mathcal{T}.
\end{displaymath}

Finally, $\iota_{+,-}$ is injective as $P(z),Q(z)$ are uniquely determined by $(f,c)$.
\end{proof}
Through the isomorphism $\iota_{+,-}$, Relation \eqref{rel: hwv3} is equivalent to the following 
\begin{equation}   \label{equ: highest weight vector odd}
\sum_{n \in \BZ} \phi_{M,n}^{\pm}z^n v_{(\underline{P},f,c)} = c \frac{Q(z)}{P(z)} v_{(\underline{P},f,c)} \in \mathbb{C}v_{(\underline{P},f,c)} [[z^{\pm 1}]]\ \ \ \mathrm{with}\ \iota_{+,-}(c,Q,P) = (f,c).
\end{equation}.

In the paper \cite{GT}, Gautam-Toledano Laredo constructed an explicit algebra homomorphism  from the quantum loop algebra $U_{\hbar}(\mathcal{L}\mathfrak{g})$ to the completion of the Yangian $Y_{\hbar}(\mathfrak{g})$ with respect to some grading, where $\mathfrak{g}$ is a finite-dimensional simple Lie algebra. Also, they are able to construct a functor from a subcategory of finite-dimensional representations of $Y_{\hbar}(\mathfrak{g})$ to a subcategory of finite-dimensional representations of $U_{\hbar}(\mathcal{L}\mathfrak{g})$. It is hopeful to have some generalisations for quantum affine superalgebras and super Yangians. 
\paragraph{Other quantum affine superalgebras.} Our approach to the theory of Weyl/simple modules of $\Ud$ is quite algebraic, without using evaluation morphisms and coproduct, and is less dependent on the actions of Weyl groups. In general, for a quantum affine superalgebra, if we know its Drinfel'd realization and its PBW generators in terms of Drinfel'd currents, it will be quite hopeful that we arrive at a good theory of finite-dimensional highest weight modules (Weyl/simple modules).

We point out that in the paper \cite{AYY}, Azam-Yamane-Yousofzadeh classified finite-dimensional  simple representations of generalized quantum groups using the notion of a {\it  Weyl groupoid}. We remark that Weyl groupoids appear naturally in the study of quantum/classical Kac-Moody superalgebras due to the existence of non-isomorphic Dynkin diagrams for the same Kac-Moody superalgebra. Roughly speaking, a Weyl groupoid is generated by even reflections similar to the case of Kac-Moody algebras, together with odd reflections in order to keep track of different Dynkin diagrams.   Note that early in \cite{Yam2}, Yamane generalised Beck's argument \cite{Beck1} by  using the Weyl groupoids instead of the Weyl groups to get the two presentations of $\Ud$. Later in \cite{groupoid}, similar arguments of Weyl groupoids led to Drinfel'd realizations of the quantum affine superalgebras $U_q(D^{(1)}(2,1;x))$. Also, in the paper \cite{Ser}, Serganova studied highest weight representations of certain classes of Kac-Moody superalgebras with the help of Weyl groupoids.  We believe that Weyl groupoids should shed light on the structures of both quantum affine superalgebras themselves and their representations.

\paragraph{Affine Lie superalgebras.} Consider the affine Lie superalgebra $\mathcal{L}\mathfrak{sl}(M,N)$ with $M \neq N$. As we have clearly the triangular decomposition and the PBW basis, we obtain a highest ($l$)-weight representation theory for $U(\mathcal{L}\mathfrak{sl}(M,N))$ similar to that of Chari \cite{Chari}. Here, the set $\mathcal{W}_{M,N}$ of highest weights are couples $(\underline{P}, f)$ where
\begin{itemize}
\item[(a)] $\underline{P} \in (1 + z \BC [z])^{M+N-1}$ (corresponding to even simple roots); 
\item[(b)] $f \in \BC[[z,z^{-1}]]$ such that $Q f = 0$ for some $Q \in 1 + z \BC[z]$ (corresponding to the odd simple root).
\end{itemize} 
Finite-dimensional simple $\mathcal{L}\mathfrak{sl}(M,N)$-modules are parametrized by their highest weight. Furthermore, for $(\underline{P},f) \in \mathcal{W}_{M,N}$ and  $Q \in 1 + z \BC[z]$ such that $Qf = 0$, we have also the Weyl module $\Weyl(\underline{P},f;Q)$ defined by generators and relations. We remark the recent work \cite{Rao} of S. Eswara Rao on finite-dimensional modules over multi-loop Lie superalgebras associated to $\mathfrak{sl}(M,N)$.  In that paper, a construction of finite-dimensional highest weight modules analogous to that of Kac induction was proposed. In this way, the character formula for these modules is easily deduced once we know the character formulae \cite{CL} for Weyl modules over $\mathcal{L}\mathfrak{sl}_n$. It is an interesting problem to compare Rao's highest weight modules with our Weyl modules, as they both enjoy universal properties. 

\appendix 
\section{Oscillation relations and triangular decomposition}  
\renewcommand\theequation{B.\oldstylenums{\arabic{equation}}}
In this appendix we finish the proof of Lemma \ref{lem: oscillation relations vs triangular decomposition}: the oscillation relations of degree 4 respect the Drinfel'd type triangular decomposition for $U_q(\mathcal{L}\mathfrak{sl}(2,2))$. As indicated in the proof of the main result Theorem \ref{thm: main}, the triangular decomposition is  needed to deduce the non-triviality of the Weyl modules. In the following, we carry out the related calculations.

Recall that $V$ is the superalgebra with generators $X_{i,n}^{\pm}, h_{i,s}, K_i^{\pm}$ and Relations \eqref{rel: Cartan}-\eqref{rel: Drinfeld2}. Here, Relations \eqref{rel: Cartan}-\eqref{rel: triangular2} ensure the triangular decomposition for $V$. For $a,b,c,d \in \mathbb{Z}$, let 
\begin{eqnarray*}
\tilde{R}(a,b,c,d) &=& R(a,b,c,d) + R(a,d,c,b) \ \mathrm{with}  \\
R(a,b,c,d) &:= & X_{1,a}^+X_{2,b}^+X_{3,c}^+X_{2,d}^+ + X_{3,c}^+X_{2,b}^+X_{1,a}^+X_{2,d}^+ + X_{2,b}^+X_{1,a}^+X_{2,d}^+X_{3,c}^+ + X_{2,b}^+X_{3,c}^+X_{2,d}^+X_{1,a}^+ \\
&\ & - (q + q^{-1}) X_{2,b}^+X_{1,a}^+X_{3,c}^+X_{2,d}^+  \in V.
\end{eqnarray*}
Let $O^+ = \sum\limits_{a,b,c,d \in \BZ} \mathbb{C} \tilde{R}(a,b,c,d)$. Our aim is to show that 
\begin{displaymath} 
[O^+, X_{i,0}^-] = 0 \ \ \ \ \mathrm{for}\ i = 1,2.  
\end{displaymath}
Using  Relation \eqref{rel: triangular2}, we see that
\begin{lem}
If $[R(a,b,c,b), X_{1,0}^-] = 0$ for all $a,b,c \in \BZ$, then $[O^+, X_{1,0}^-] = 0$.
\end{lem}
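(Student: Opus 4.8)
The plan is to turn the statement into an elementary consequence of Relation \eqref{rel: triangular2} together with the anticommutativity of the odd currents. Since $O^+ = \sum_{a,b,c,d}\mathbb{C}\,\tilde R(a,b,c,d)$ with $\tilde R(a,b,c,d) = R(a,b,c,d) + R(a,d,c,b)$, it suffices to show $[\tilde R(a,b,c,d), X_{1,0}^-] = 0$ for all $a,b,c,d \in \BZ$. First I would record how $[\,\cdot\,, X_{1,0}^-]$ acts on the factors appearing in $R$. By Relation \eqref{rel: triangular2} one has $[X_{2,n}^+, X_{1,0}^-] = [X_{3,n}^+, X_{1,0}^-] = 0$ for all $n$ (because $2,3 \neq 1$), whereas $[X_{1,a}^+, X_{1,0}^-] = \psi_a$, where $\psi_a := \frac{\phi_{1,a}^+ - \phi_{1,a}^-}{q_1 - q_1^{-1}}$ lies in the subalgebra generated by the $K_1^{\pm 1}, h_{1,s}$. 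As $X_{1,0}^-$ is even, $[\,\cdot\,, X_{1,0}^-]$ obeys the ordinary Leibniz rule, and each of the five monomials of $R(a,b,c,d)$ contains exactly one factor $X_{1,a}^+$ and no other $X_{1,\ast}^+$. Hence $[R(a,b,c,d), X_{1,0}^-]$ equals the element $R^{\flat}(a,b,c,d)$ obtained by replacing that single factor $X_{1,a}^+$ by $\psi_a$ in place.

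The point of this reduction is that $R^{\flat}(a,b,c,d)$ is manifestly $\mathbb{C}$-bilinear in the pair of generators $(X_{2,b}^+, X_{2,d}^+)$, each monomial carrying exactly one factor of each, so that $[\tilde R(a,b,c,d), X_{1,0}^-] = R^{\flat}(a,b,c,d) + R^{\flat}(a,d,c,b)$ is the symmetrization of this bilinear expression in the two slots occupied by the $X_2$-currents. Setting $d = b$ gives $\tilde R(a,b,c,b) = 2R(a,b,c,b)$, so the hypothesis is precisely that the diagonal values $R^{\flat}(a,e,c,e)$ all vanish. The aim is then to reduce the off-diagonal symmetrized expression back to these diagonal values.

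The main obstacle is that this reduction is not a formal polarization: bilinearity together with vanishing on the single generators $X_{2,e}^+$ does not by itself force a symmetric bilinear form to vanish off the diagonal, and $X_{2,b}^+ + X_{2,d}^+$ is not itself of the form $X_{2,e}^+$. The genuine input must come from the relations among the odd currents. Here I would use that $c_{2,2} = (\epsilon_2 - \epsilon_3, \epsilon_2 - \epsilon_3) = 0$, so Relation \eqref{rel: Drinfel1} gives $X_{2,b}^+X_{2,d}^+ + X_{2,d}^+X_{2,b}^+ = 0$ (and in particular $(X_{2,e}^+)^2 = 0$), together with the shift relations \eqref{rel: triangular1} and \eqref{rel: Drinfeld2} that govern how $\psi_a$ and $X_{3,c}^+$ move past the $X_{2,\ast}^+$. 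Normal-ordering $R^{\flat}(a,b,c,d) + R^{\flat}(a,d,c,b)$ through these relations should collapse it onto $\mathbb{C}$-combinations of the diagonal quantities $R^{\flat}(a,e,c,e)$, which vanish by hypothesis. I expect the bookkeeping of the index shifts produced by $\psi_a$, via $[h_{1,s}, X_{2,n}^+] = -\frac{[s]_{q}}{s}X_{2,n+s}^+$, to be the fiddly part, with the anticommutativity of the $X_{2,n}^+$ the structural fact that makes the collapse possible. The companion statement $[O^+, X_{2,0}^-] = 0$ lies outside this lemma and must be handled separately, since there $X_{2,0}^-$ no longer commutes with both $X_2$-factors.
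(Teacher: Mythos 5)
Your first reduction is exactly right, and it is the part the paper's one-line justification (``Using Relation \eqref{rel: triangular2}\ldots'') actually covers: since $X_{1,0}^-$ is even and, by \eqref{rel: triangular2}, supercommutes with every factor except $X_{1,a}^+$, the bracket $[R(a,b,c,d),X_{1,0}^-]$ is obtained by replacing the single factor $X_{1,a}^+$ by $\psi_a = \frac{\phi_{1,a}^+-\phi_{1,a}^-}{q-q^{-1}}$ in place. You are also right that naive polarization fails, and that some further algebraic input is needed. The gap is in your proposed resolution. The anticommutativity $[X_{2,b}^+,X_{2,d}^+]=0$ cannot be brought to bear directly, because in every monomial the two $X_2$-factors are separated by $\psi_a$ or $X_{3,c}^+$; pushing them together via \eqref{rel: triangular1} and \eqref{rel: Drinfeld2} creates exactly the shifted, off-diagonal terms you are trying to eliminate. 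Moreover, since the diagonal quantities are all zero by hypothesis, ``collapsing onto combinations of diagonal quantities'' is not a reduction at all but a direct verification that $[\tilde R(a,b,c,d),X_{1,0}^-]=0$ --- i.e.\ redoing, in two independent variables, the very computation (Case $i=1$ of the appendix) that the lemma is designed to reduce to one variable. (That two-variable computation is what the paper is forced to carry out in Case $i=2$, where no diagonal reduction is available.) As written, your plan stops precisely at the step that needs an idea.

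The missing idea is the shift operator $\mathrm{ad}(h_{3,s})$ --- the same $[h_{u,s},\cdot]$ trick the paper uses at the end of the proof of Lemma \ref{lem: technical}. Since $c_{3,1}=0$, Relation \eqref{rel: triangular1} gives $[h_{3,s},X_{1,0}^-]=0=[h_{3,s},X_{1,a}^+]$, while $[h_{3,s},X_{2,n}^+]=\mu_s X_{2,n+s}^+$ and $[h_{3,s},X_{3,n}^+]=\kappa_s X_{3,n+s}^+$ with $\mu_s\neq 0$ for $s \neq 0$ (as $q$ is not a root of unity). Applying $\mathrm{ad}(h_{3,s})$ to the hypothesis $[R(a,b,c,b),X_{1,0}^-]=0$, the Jacobi identity and the Leibniz rule give (the coefficient $\mu_s$ being the same for both $X_2$-slots)
\begin{displaymath}
0=[[h_{3,s},R(a,b,c,b)],X_{1,0}^-]
=\mu_s\,[R(a,b+s,c,b)+R(a,b,c,b+s),X_{1,0}^-]
+\kappa_s\,[R(a,b,c+s,b),X_{1,0}^-].
\end{displaymath}
The last bracket is again diagonal, hence vanishes by hypothesis, so $[\tilde R(a,b+s,c,b),X_{1,0}^-]=0$ for all $s\neq 0$; for $s=0$ one has $\tilde R(a,b,c,b)=2R(a,b,c,b)$. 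Since every $\tilde R(a,b,c,d)$ equals $\tilde R(a,d+s,c,d)$ with $s=b-d$, this is exactly $[O^+,X_{1,0}^-]=0$.
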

\paragraph{Case  $i=1$.}
Introduce the formal series $R_{b,c}(z) := \sum\limits_{a \in \mathbb{Z}}[R(a,b,c,b)z^a, X_{1,0}^-]  \in V[[z,z^{-1}]]$. Using the relations $[X_{i,a}^+, X_{1,0}^-] = \delta_{i,1} \frac{\phi_{1,a}^+ - \phi_{1,a}^-}{q-q^{-1}}$
one gets $R_{b,c}(z) = \frac{1}{q-q^{-1}} (R_{b,c}^+(z) - R_{b,c}^-(z)) $ where
\begin{eqnarray*}
R_{b,c}^{\pm}(z) &=& \phi_1^{\pm}(z)X_{2,b}^+X_{3,c}^+X_{2,b}^+ + X_{3,c}^+X_{2,b}^+\phi_1^{\pm}(z)X_{2,b}^+ + X_{2,b}^+\phi_1^{\pm}(z)X_{2,b}^+X_{3,c}^+ + X_{2,b}^+X_{3,c}^+X_{2,b}^+ \phi_1^{\pm}(z) \\
&\ & - (q+q^{-1}) X_{2,b}^+\phi_{1}^{\pm}(z) X_{3,c}^+X_{2,b}^+ \\
\phi_1^{\pm}(z) &=& \sum_{a \geq 0} \phi_{1,\pm a}^{\pm} z^{\pm a} \in {V}[[z^{\pm 1}]].
\end{eqnarray*}
We shall prove $R^+_{b,c}(z) = 0$ ( $R^-_{b,c}(z) = 0$ being analogous). Note that we have the following relations
\begin{eqnarray*}
&& \phi_1^+(z) = K_1 \exp((q-q^{-1})\sum_{s \geq 1}h_{1,s} z^s ) =: K_1 h_1(z) \\
&& K_1 h_1(z) X_{3,c}^+ = X_{3,c}^+ K_1h_1(z) \\
&& K_1 h_1(z) X_{2,b}^+ = q^{-1}(X_{2,b}^+ + \sum_{s \geq 1} a_s X_{2,b+s}^+ z^s ) K_1 h_1(z)\ \ \ \ \mathrm{with}\ a_s = q^{-s} - q^{-s+2},
\end{eqnarray*} 
which imply that $R_{b,c}^+(z) = \lambda_{b,c}(z) K_1h_1(z)$ with
\begin{eqnarray*}
\lambda_{b,c}(z) &=& q^{-2}(X_{2,b}^+ + \sum_{s \geq 1}a_s X_{2,b+s}^+ z^s) X_{3,c}^+ (X_{2,b}^+ + \sum_{s \geq 1}a_s X_{2,b+s}^+ z^s) \\
&\ & + q^{-1} X_{3,c}^+X_{2,b}^+ (X_{2,b}^+ + \sum_{s \geq 1}a_s X_{2,b+s}^+ z^s) + q^{-1}X_{2,b}^+(X_{2,b}^+ + \sum_{s \geq 1}a_s X_{2,b+s}^+ z^s) X_{3,c}^+\\
&\ & + X_{2,b}^+X_{3,c}^+X_{2,b}^+ - (q+q^{-1}) q^{-1} X_{2,b}^+ X_{3,c}^+ (X_{2,b}^+ + \sum_{s \geq 1}a_s X_{2,b+s}^+ z^s) \\
&=: & \sum_{n \geq 0} \lambda(n,b,c) z^n \in {V}[[z]].
\end{eqnarray*}
It is clear that $\lambda(0,b,c) = 0$. To deduce that $\lambda(n,b,c) = 0$ for all $n \geq 1$, we do the triangular decomposition for $\lambda(n,b,c)$ by using the Drinfel'd relations of degree 2.
\begin{eqnarray*}
\lambda(n,b,c) &=& q^{-2} X_{2,b}^+X_{3,c}^+ a_n X_{2,b+n}^+ + q^{-2} a_n X_{2,b+n}^+ X_{3,c}^+X_{2,b}^+ + q^{-2} \sum_{s+t = n, s,t \geq 1} a_s a_t X_{2,b+s}^+X_{3,c}^+X_{2,b+t}^+ \\
&\ & + q^{-1} X_{3,c}^+X_{2,b}^+ a_n X_{2,b+n}^+ + q^{-1} X_{2,b}^+a_nX_{2,b+n}^+X_{3,c}^+ - (1+q^{-2}) X_{2,b}^+X_{3,c}^+a_n X_{2,b+n}^+ \\
&=: &  I_n(b,c) + q^{-2} I_n'(b,c) \ \ \ \ \mathrm{with}\ I_n'(b,c) = \sum_{s+t = n, s,t \geq 1}.  
\end{eqnarray*}
Here, by  relations $X_{2,b+s}^+X_{3,c}^+ = q X_{3,c}^+X_{2,b+s}^+ + qX_{2,b+s-1}^+X_{3,c+1}^+ - X_{3,c+1}^+X_{2,b+s-1}^+$, we get
\begin{eqnarray*}
I_n'(b,c) &=&  \sum_{s+t=n,s,t\geq 1} a_s a_t(q  X_{3,c}^+ X_{2,b+s}^+X_{2,b+t}^+ + qX_{2,b+s-1}^+X_{3,c+1}^+X_{2,b+t}^+ - X_{3,c+1}^+X_{2,b+s-1}^+X_{2,b+t}^+)  \\
&=& \sum_{s+t=n, s,t \geq 1} a_{s-1}a_t X_{2,b+s-1}^+X_{3,c+1}^+X_{2,b+t}^+ - a_1 a_{n-1} X_{3,c+1}^+X_{2,b}^+X_{2,b+n-1}^+ \\
&=& I_{n-1}'(b,c+1) + a_0 a_{n-1} X_{2,b}^+X_{3,c+1}^+X_{2,b+n-1}^+ - a_1a_{n-1} X_{3,c+1}^+X_{2,b}^+X_{2,b+n-1}^+,  \\
I_n(b,c) &=& a_n(q^{-1} X_{2,b}^+ X_{2,b+n}^+X_{3,c}^+ - X_{2,b}^+X_{3,c}^+ X_{2,b+n}^+ + q^{-2}X_{2,b+n}^+X_{3,c}^+X_{2,b}^+ - q^{-1}X_{3,c}^+X_{2,b+n}^+X_{2,b}^+ ) \\
&=& a_n (X_{2,b}^+X_{2,b+n-1}^+X_{3,c+1}^+ - q^{-1}X_{2,b}^+ X_{3,c+1}^+X_{2,b+n-1}^+ \\
&\ & + q^{-1 }X_{2,b+n-1}^+ X_{3,c+1}^+ X_{2,b}^+ -q^{-2}X_{3,c+1}^+ X_{2,b+n-1}^+X_{2,b}^+ )  \\
&=&  I_{n-1}(b,c+1) +  a_{n-1} ((1-q^{-2})X_{2,b}^+X_{3,c+1}^+X_{2,b+n-1}^+ + (q^{-1}-q^{-3})X_{3,c+1}^+X_{2,b+n-1}^+X_{2,b}^+ )
\end{eqnarray*}
where we used that $a_s = q^{-1} a_{s-1}$. Henceforth,
\begin{eqnarray*}
\lambda(n,b,c) =  I_{n}(b,c) + q^{-2} I'_n(b,c) =  I_{n-1}(b,c+1) + q^{-2} I_{n-1}'(b,c+1) = \lambda(n-1,b,c+1). 
\end{eqnarray*}
Since $\lambda(0,b,c) = 0$ for all $b,c \in \mathbb{Z}$, we conclude that $\lambda(n,b,c) = 0$  and $R_{b,c}^+(z) = 0$. 
\paragraph{Case $i=2$.} We need to verify that $\tilde{R}_{a,c}(z,w) := \sum\limits_{b,d \in \mathbb{Z}} [\tilde{R}(a,b,c,d), X_{2,0}^-] z^b w^d = 0 \in {V}[[z,z^{-1},w,w^{-1}]]$ for all $a,c \in \mathbb{Z}$. Similar to the case $i=1$, we can express 
\begin{displaymath}
\tilde{R}_{a,c}(z,w) = \frac{1}{q-q^{-1}}( \tilde{R}_{a,c}^{++}(z,w) + \tilde{R}_{a,c}^{+-}(z,w) + \tilde{R}_{a,c}^{-+}(z,w) + \tilde{R}_{a,c}^{--}(z,w) )
\end{displaymath}
with $\tilde{R}^{ij} \in V[[z^i,w^j]]$ for $i,j \in \{\pm \}$. We need to prove that $\tilde{R}^{ij} = 0$. For simplicity, consider only the case 
\begin{eqnarray*}
\tilde{R}^{++}_{a,c}(z,w) &:= & R_{a,c}^{++}(z,w) + R_{a,c}^{++}(w,z) \in {V}[[z,w]]\ \  \mathrm{where} \\
R_{a,c}^{++}(z,w) &:= & - X_{1,a}^+ \phi_2^+(z)X_{3,c}^+(\sum_{d \geq 0} X_{2,d}^+ w^d) + X_{1,a}^+(\sum_{b \geq 0}X_{2,b}^+ z^b) X_{3,c}^+\phi_2^+(w) \\
&\ &  - X_{3,c}^+ \phi_2^+(z)X_{1,a}^+(\sum_{d \geq 0} X_{2,d}^+ w^d) + X_{3,c}^+(\sum_{b \geq 0}X_{2,b}^+ z^b) X_{1,a}^+\phi_2^+(w)  \\
&\ &  - \phi_2^+(z) X_{1,a}^+ (\sum_{d \geq 0}X_{2,d}^+ w^d)X_{3,c}^+ + (\sum_{b \geq 0}X_{2,b}^+z^b) X_{3,c}^+\phi_2^+(w)X_{1,a}^+ \\
&\ &  - \phi_2^+(z) X_{3,c}^+ (\sum_{d \geq 0}X_{2,d}^+ w^d)X_{1,a}^+ + (\sum_{b \geq 0}X_{2,b}^+z^b) X_{1,a}^+\phi_2^+(w)X_{3,c}^+  \\
&\ & +(q+q^{-1}) \phi_2^+(z)X_{1,a}^+X_{3,c}^+(\sum_{d \geq 0}X_{2,d}^+w^d) - (q+q^{-1}) (\sum_{b \geq 0}X_{2,b}^+z^b) X_{1,a}^+X_{3,c}^+ \phi_2^+(w) \in {V}[[z,w]]
\end{eqnarray*}
with $\phi_2^+(z) = K_2 \exp((q-q^{-1})\sum\limits_{s\geq 1} h_{2,s} z^s ) = K_2 h_2(z)$. Observe that there  exists a formal series $G_{a,c}(z,w) \in {V}[[z,w]]$ with $\tilde{R}_{a,c}^{++}(z,w) = G_{a,c}(z,w) K_2 h(z) + G_{a,c}(w,z) K_2 h_2(w) \in {V}[[z,w]]$.
More precisely, we have
\begin{eqnarray*}
G_{a,c}(z,w) &=& -q X_{1,a}^+ (X_{3,c}^+ + \sum_{s\geq 1}b_s X_{3,c+s}^+z^s ) (\sum_{d \geq 0} X_{2,d}^+w^d) -(q+q^{-1}) (\sum_{b \geq 0} X_{2,b}^+ w^b)X_{1,a}^+X_{3,c}^+   \\
&\ & -q^{-1} X_{3,c}^+ (X_{1,a}^+ + \sum_{s\geq 1}a_s X_{1,a+s}^+z^s ) (\sum_{d \geq 0} X_{2,d}^+w^d) + X_{3,c}^+(\sum_{b \geq 0} X_{2,b}^+ w^b) X_{1,a}^+ \\
&\ & -(X_{1,a}^+ + \sum_{s \geq 1} a_s X_{1,a+s}^+ z^s) (\sum_{d \geq 0}X_{2,d}^+w^d)(X_{3,c}^+ + \sum_{t \geq 1}b_t X_{3,c+t}^+z^t)  \\
&\ & -(X_{3,c}^+ + \sum_{s \geq 1} b_s X_{3,c+s}^+ z^s) (\sum_{d \geq 0}X_{2,d}^+w^d)(X_{1,a}^+ + \sum_{t \geq 1}a_t X_{1,a+t}^+z^t) + X_{1,a}^+(\sum_{b \geq 0} X_{2,b}^+ w^b) X_{3,c}^+\\
&\ & + q(\sum_{b \geq 0}X_{2,b}^+w^b) X_{1,a}^+ (X_{3,c}^+ + \sum_{t \geq 1}b_t X_{3,c+t}^+ z^t) + q^{-1}(\sum_{b \geq 0}X_{2,b}^+w^b) X_{3,c}^+(X_{1,a}^+ + \sum_{s\geq 1}a_s X_{1,a+s}^+ z^s) \\
&\ & +(q+q^{-1}) (X_{1,a}^+ + \sum_{s \geq 1} a_s X_{1,a+s}^+ z^s) (X_{3,c}^+ + \sum_{t \geq 1}b_t X_{3,c+t}^+z^t)(\sum_{d \geq 0}X_{2,d}^+w^d) \\
&=: & \sum_{n,d \geq 0} \mu(a,c,n,d) z^nw^d \ \ \ \ \ \ \ \mathrm{with}\ a_s = q^{-s}-q^{-s+2},\ b_s = q^s - q^{s-2}.
\end{eqnarray*}
It is clear that $\mu(a,c,0,d) = 0$. In general, for $n \geq 1$
\begin{eqnarray*}
\mu(a,c,n,d) &=& -q b_n X_{1,a}^+ X_{3,c+n}^+X_{2,d}^+ - q^{-1}a_n X_{3,c}^+ X_{1,a+n}^+X_{2,d}^+ + q b_n X_{2,d}^+X_{1,a}^+X_{3,c+n}^+  \\
&\ & - b_n X_{1,a}^+X_{2,d}^+X_{3,c+n}^+ - a_n X_{1,a+n}^+X_{2,d}^+X_{3,c}^+ - a_n X_{3,c}^+X_{2,d}^+X_{1,a+n}^+ - b_n X_{3,c+n}^+X_{2,d}^+X_{1,a}^+ \\
&\ &+ q^{-1} a_n X_{2,d}^+X_{3,c}^+X_{1,a+n}^+ + (q+q^{-1}) \sum_{s+t=n, s,t \geq 1} a_s b_t X_{1,a+s}^+X_{3,c+t}^+X_{2,d}^+  \\
&\ & + (q+q^{-1})b_n X_{1,a}^+X_{3,c+n}^+X_{2,d}^+ + (q+q^{-1})a_n X_{1,a+n}^+X_{3,c}^+X_{2,d}^+ \\
&\ & - \sum_{s+t=n, s,t \geq 1} a_s b_t X_{1,a+s}^+X_{2,d}^+X_{3,c+t}^+ - \sum_{s+t=n,s,t \geq 1} a_sb_t X_{3,c+t}^+ X_{2,d}^+X_{1,a+s}^+ \\
&=: &  A(a,c,n,d) + B(a,c,n,d) \ \ \  \mathrm{with}\\
A(a,c,n,d) &= &q^{-1}b_n X_{1,a}^+X_{3,c+n}^+X_{2,d}^+ + q a_n X_{1,a+n}^+X_{3,c}^+X_{2,d}^+ + q b_n X_{2,d}^+ X_{1,a}^+X_{3,c+n}^+ + q^{-1} a_n X_{2,d}^+X_{3,c}^+X_{1,a+n}^+ \\
&\ &- b_n X_{1,a}^+X_{2,d}^+ X_{3,c+n}^+ - a_n X_{3,c}^+X_{2,d}^+X_{1,a+n}^+ - b_n X_{3,c+n}^+X_{2,d}^+ X_{1,a}^+ - a_n X_{1,a+n}^+X_{2,d}^+X_{3,c}^+ \\
&=& b_n X_{1,a}^+ X_{3,c+n-1}^+ X_{2,d+1}^+ + a_n  X_{3,c}^+ X_{1,a+n-1}^+X_{2,d+1}^+ + b_n  X_{2,d+1}^+X_{3,c+n-1}^+X_{1,a}^+\\
&\ & + a_n X_{2,d+1}^+X_{1,a+n-1}^+X_{3,c}^+  - q^{-1} b_n X_{1,a}^+ X_{2,d+1}^+X_{3,c+n-1}^+ - qa_n X_{3,c}^+X_{2,d+1}^+X_{1,a+n-1}^+ \\
&\ &- qb_n X_{3,c+n-1}^+X_{2,d+1}^+X_{1,a}^+ - q^{-1}a_n X_{1,a+n-1}^+X_{2,d+1}^+X_{3,c}^+ \\
&=& A(a,c,n-1,d+1) + (q-q^{-1})(b_{n-1} X_{1,a}^+X_{3,c+n-1}^+X_{2,d+1}^+ - a_{n-1}  X_{3,c}^+X_{1,a+n-1}^+ X_{2,d+1}^+)  \\
&\ &+ (1-q^2)b_{n-1} X_{3,c+n-1}^+X_{2,d+1}^+X_{1,a}^+ + (1 -q^{-2})a_{n-1}X_{1,a+n-1}^+X_{2,d+1}^+X_{3,c}^+
\end{eqnarray*}
where the second equality comes from Drinfel'd relations of degree 2 between $X_{2,a}^+$ and $X_{i,b}^+$ for $i = 1,3$.
\begin{eqnarray*}
B(a,c,n,d) &=& \sum_{s+t=n, s,t \geq 1} a_s b_t \{ q^{-1}X_{1,a+s}^+(X_{3,c+t}^+ X_{2,d}^+ - qX_{2,d}^+X_{3,c+t}^+) \\
&\ & + qX_{3,c+t}^+(X_{1,a+s}^+X_{2,d}^+ - q^{-1}X_{2,d}^+X_{1,a+s}^+) \} \\
&=& \sum_{s+t=n, s,t \geq 1} a_s b_t \{ X_{1,a+s}^+(X_{3,c+t-1}^+ X_{2,d+1}^+ - q^{-1}X_{2,d+1}^+X_{3,c+t-1}^+)\\
&\ & + X_{3,c+t}^+(X_{1,a+s-1}^+X_{2,d+1}^+ - qX_{2,d+1}^+X_{1,a+s-1}^+) \} \\
&=& B(a,c,n-1,d+1) + (q-q^{-1})(a_{n-1} X_{1,a+n-1}^+X_{3,c}^+X_{2,d+1}^+ -b_{n-1}X_{1,a}^+X_{3,c+n-1}^+X_{2,d+1}^+) \\
&\ & \ -(1-q^{-2})a_{n-1} X_{1,a+n-1}^+X_{2,d+1}^+X_{3,c}^+ - (1-q^2)b_{n-1} X_{3,c+n-1}^+X_{2,d+1}^+X_{1,a}^+, \\
\mu(a,c,n,d) &=& A(a,c,n,d) + B(a,c,n,d)\\
 &=& A(a,c,n-1,d+1) + B(a,c,n-1,d+1) = \mu(a,c,n-1,d+1).
\end{eqnarray*}
We get  $\mu(a,c,n,d) = 0$ for all $a,c,n,d$, i.e. $G_{a,c}(z,w) = 0$. Therefore $\tilde{R}^{++}_{a,c}(z,w) = 0$, as desired.

\section{Quantum brackets and coproduct formulae}
Recall that we have a morphism of superalgebras $\Phi: \Uc \longrightarrow \Ud$ (Theorem \ref{thm: two presentations}). In this appendix, we will write $h_{i,\pm 1}$ for $1 \leq i \leq M+N-1$ as products in the $\Phi(E_i^{\pm})$ and deduce their coproduct formulae. These formulae have been used in the proofs of Proposition \ref{prop: coproduct formulas Drinfel'd} and  Lemma \ref{lem: evaluation on Drinfel'd generators}.  For simplicity, let $E_0^{\pm} := \Phi(E_0^{\pm}) \in (\Ud)_{\mp (\alpha_1+\cdots+\alpha_{M+N-1})}$. Recall that $\Ud$ is $Q_{M,N}$-graded.
\begin{notation}
(1) Recall that $Q_{M,N} = \bigoplus\limits_{i=1}^{M+N-1} \BZ \alpha_i$ and the parity map $p \in \hom_{\BZ}(Q_{M,N}, \super)$ given by $p(\alpha_i) = \odd$ for $i = M$ and $\even$ for $i \neq M$. Endow $Q_{M,N}$ with a bilinear form: $(\alpha_i, \alpha_j) := (\epsilon_i - \epsilon_{i+1}, \epsilon_j - \epsilon_{j+1})$.
 
(2) Let $A$ be a $Q_{M,N}$-graded algebra. (We mainly consider $\Ud, \Ud^{\stensor 2}$.) For $u \in (A)_{\alpha}, v \in (A)_{\beta}$, define the quantum bracket
\begin{displaymath}
\lfloor u, v \rfloor := u v - (-1)^{p(\alpha)p(\beta)} q^{-(\alpha, \beta)} v u = [u,v]_{q^{-(\alpha, \beta)}} \in (A)_{\alpha + \beta}.
\end{displaymath}
For $u_1, u_2, \cdots, u_n \in A$, let $\lfloor u_1, u_2, \cdots, u_n \rfloor := \lfloor u_1, \lfloor u_2, \cdots, \lfloor u_{n-1}, u_n \rfloor \cdots \rfloor \rfloor$,  with the convention that $\lfloor u \rfloor = u$ (brackets from right to left). 
\end{notation} 

 An induction argument on $i$ shows that 
\begin{eqnarray*}
&&\lfloor X_{i,0}^+, X_{i+1,0}^+, \cdots, X_{M+N-1,0}^+, E_0^+ \rfloor = -(\prod_{j=2}^{i-1} -q_j^{-1}) \lfloor\lfloor \cdots \lfloor X_{1,1}^-, X_{2,0}^- \rfloor, \cdots \rfloor, X_{i-1,0}^- \rfloor (K_1 K_2 \cdots K_{i-1})^{-1}, \\
&&\lfloor X_{2,0}^+, X_{3,0}^+, \cdots, X_{M+N-1,0}^+, E_0^+ \rfloor = - X_{1,1}^- K_1^{-1},\ \ h_{1,1} = -\lfloor X_{1,0}^+, X_{2,0}^+, \cdots, X_{M+N-1,0}^+, E_0^+ \rfloor.
\end{eqnarray*}

Remark that $[h_{1,1}, X_{2,0}^-] = (-1)^{p(\alpha_1)} X_{2,1}^-$. Hence,
\begin{eqnarray*}
X_{2,1}^- = (-1)^{p(\alpha_1)} [\lfloor X_{2,0}^+, X_{3,0}^+, \cdots, X_{M+N-1,0}^+, E_0^+ \rfloor, X_{2,0}^- ] = (-1)^{p(\alpha_2)} \lfloor X_{1,0}^+, X_{3,0}^+, \cdots, X_{M+N-1,0}^+, E_0^+ \rfloor K_2
\end{eqnarray*}
where we have used that $[X_{i,0}^+, X_{2,0}^-] = 0 = [E_0^+, X_{2,0}^-]$ for $i \neq 2$.  From $[X_{2,0}^+, X_{2,1}^-] = K_2 h_{2,1}$ we get
\begin{displaymath}
h_{2,1} =  (-1)^{p(\alpha_1)} \lfloor X_{2,0}^+, X_{1,0}^+, X_{3,0}^+, \cdots, X_{M+N-1,0}^+, E_0^{+} \rfloor.
\end{displaymath}
Again an induction argument on $i$ shows that for $1 \leq i \leq M+N-1$
\begin{equation}  \label{equ: h_{i,1}}
h_{i,1} = \lambda_i \lfloor X_{i,0}^+, X_{i-1,0}^+, \cdots, X_{1,0}^+, X_{i+1,0}^+, \cdots, X_{M+N-1,0}^+, E_0^+ \rfloor \ \ \ \mathrm{where}\ \lambda_i = \begin{cases}
(-1)^{i} & i \leq M \\
(-1)^{i-1} & i > M
\end{cases}.
\end{equation}
Next, we compute $\Delta(h_{1,1})$ modulo $U (X^-)^2 \stensor U (X^+)^2$. By definition
\begin{displaymath}
\Delta (h_{1,1}) = - \lfloor \Delta X_{1,0}^+, \cdots, \Delta X_{M+N-1,0}^+, 1 \stensor E_0^+ \rfloor - \lfloor \Delta X_{1,0}^+, \cdots, \Delta X_{M+N-1,0}^+, E_0^+ \stensor (K_1\cdots K_{M+N-1}) \rfloor. 
\end{displaymath} 
On the other hand, remark that $-\lfloor \Delta X_{1,0}^+, \cdots, \Delta X_{M+N-1,0}^+, 1 \stensor E_0^+ \rfloor =  1 \stensor h_{1,1}$ since 
\begin{displaymath}
\lfloor X_{i,0}^+ \stensor K_i^{-1}, X_{i+1,0}^+ \stensor K_{i+1}^{-1}, \cdots, X_{M+N-1,0}^+ \stensor K_{M+N-1,0}^{-1}, 1 \stensor E_0^+   \rfloor = 0
\end{displaymath}
for $1 \leq i \leq M+N-1$, and modulo $U (X^-)^2 \stensor U (X^+)^2$, 
\begin{displaymath}
\Delta h_{1,1} = 1 \stensor h_{1,1} + h_{1,1} \stensor 1 - \sum_{i=1}^{M+N-1} \lfloor \cdots, X_{i-1,0}^+ \stensor K_{i-1}^{-1},1 \stensor X_{i,0}^+, X_{i+1,0}^+ \stensor K_{i+1}^{-1},  \cdots,  E_0^+ \stensor (K_1\cdots K_{M+N-1}) \rfloor.
\end{displaymath}
When $i \geq 3$, the corresponding term in the above summation becomes $0$. Similar argument leads to the first part of Lemma \ref{lem: coproduct Drinfel'd zero generators} for $\Delta (h_{i,1})$.

\begin{notation}
Let $A$ be a $Q_{M,N}$-graded algebra. For $u \in (A)_{\alpha}, v \in (A)_{\beta}$, define the quantum bracket
\begin{displaymath}
\lceil u, v \rceil := u v - (-1)^{p(\alpha)p(\beta)} q^{(\alpha, \beta)} v u = [u,v]_{q^{(\alpha,\beta)}} \in (A)_{\alpha + \beta}.
\end{displaymath} 
For $u_1, u_2, \cdots, u_n \in A$, let $\lceil u_1,u_2,\cdots,u_n \rceil := \lceil \cdots \lceil u_1, u_2 \rceil, \cdots, u_n \rceil$ (brackets from left to right).
\end{notation}
Using the second type of quantum brackets, we obtain 
\begin{displaymath}
\lceil E_0^-, X_{M+N-1,0}^-, \cdots, X_{i+1,0}^- \rceil = (K_1 \cdots K_i) \lfloor\lfloor \cdots\lfloor X_{1,-1}^+, X_{2,0}^+\rfloor, \cdots\rfloor, X_{i,0}^+ \rfloor.
\end{displaymath}
and $h_{1,-1} = K_1^{-1}[X_{1,-1}^+, X_{1,0}^-] = \lceil E_0^-, X_{M+N-1,0}^-, \cdots, X_{1,0}^- \rceil$. Similar to the case $h_{i,1}$, we have
\begin{equation}   \label{equ: h_{i,-1}}
h_{i,-1} = - \lambda_i \lceil E_0^-, X_{M+N-1,0}^-, \cdots, X_{i+1,0}^-, X_{1,0}^-, \cdots, X_{i,0}^- \rceil,
\end{equation}
and the second part of Lemma \ref{lem: coproduct Drinfel'd zero generators} is easily deduced.

\addcontentsline{toc}{section}{References}

\

{\footnotesize \noindent Huafeng ZHANG:  Universit\'{e} Paris Diderot - Paris 7, Institut de Math\'{e}matiques de Jussieu - Paris Rive Gauche CNRS UMR 7586,  B\^{a}timent Sophie Germain, Case 7012, 75205 Paris Cedex 13, France

\noindent Email: zhangh@math.jussieu.fr }
\end{document}